\newtheorem{theorem}[subsection]{Theorem}
\newtheorem{proposition}[subsection]{Proposition}
\newtheorem{lemma}[subsection]{Lemma}
\newtheorem{corollary}[subsection]{Corollary}
\theoremstyle{definition}
\newtheorem{discussion}[subsection]{}
\theoremstyle{remark}
\newtheorem*{ack}{Acknowledgements}
\newtheorem{remark}[subsection]{Remark}
\numberwithin{equation}{subsection}
\theoremstyle{plain}
\newcounter{intro}
\newtheorem{introthm}[intro]{Theorem}
\newcommand{\nospacepunct}[1]{\makebox[0pt][l]{\,#1}}
\renewcommand{\epsilon}{\varepsilon}
\renewcommand{\phi}{\varphi}
\renewcommand{\theta}{\vartheta}
\newcommand{\ai}{\an{\infty}}
\newcommand{\an}[1]{\mathrm{A}_{#1}}
\newcommand{\augCoa}{\cat{C}\mathrm{oa}^{aug}}
\newcommand{\cat}[1]{{\mathsf{#1}}}
\newcommand{\cder}{\operatorname{cder}}
\newcommand{\cmor}{\operatorname{cmor}}
\newcommand{\Coder}[2][]{\operatorname{Cder}^{#1}(#2)}
\DeclareMathOperator{\coker}{coker}
\DeclareMathOperator{\cone}{cone}
\renewcommand{\hom}[3][]{\operatorname{Hom}^{#1}(#2,#3)}
\newcommand{\Hom}[4][]{\operatorname{Hom}^{#1}_{#2}(#3,#4)}
\DeclareMathOperator{\id}{id}
\newcommand{\obs}[2]{\operatorname{obs}^{#1}(#2)}
\newcommand{\obsr}[2]{\overline{\operatorname{obs}}^{#1}(#2)}
\newcommand{\shift}{{\scriptstyle\mathsf{\Sigma}}}
\newcommand{\susp}{\Sigma}
\newcommand{\tcoa}[2][]{{\rm T}_{\ifthenelse{\isempty{#1}}{}{(#1)}}^{c}(#2)}
\newcommand{\BN}{\mathbb{N}}
\newcommand{\BZ}{\mathbb{Z}}
\newcommand{\bmalpha}{\bm{\alpha}}
\newcommand{\bmbeta}{\bm{\beta}}
\newcommand{\bmphi}{\bm{\varphi}}
\newcommand{\bmpsi}{\bm{\psi}}
\newcommand{\bmf}{\bm{f}}
\newcommand{\bmg}{\bm{g}}
\title{Transfer of \texorpdfstring{$A_\infty$-}{A-infinity }structures to projective resolutions}
\date{\today}
\keywords{A-infinity algebras, algebra resolutions, obstructions, strictly unital A-infinity algebras}
\subjclass[2020]{16E45 (primary), 18G70, 13D02}
\author[J.~C.~Letz]{Janina C. Letz}
\address{Janina~C.~Letz,
Faculty of Mathematics,
Bielefeld University,
PO Box 100 131,
33501 Bielefeld,
Germany \newline
UCLA Department of Mathematics,
PO Box 951555, 
Los Angeles, CA 90095, 
United States}
\email{jletz@math.uni-bielefeld.de}
\begin{document}

\begin{abstract}
Let $B$ be a (unital) algebra, or an $\ai$-algebra, over a commutative ring $Q$. We show that the algebra structure on $B$ lifts to a (unital) $\ai$-algebra structure on any projective resolution of $B$. We show a similar result for module structures. The proofs are constructive and provide a step-by-step construction of the $\ai$-structures. 
\end{abstract}

\maketitle


\section{Introduction}

Algebras that are associative up to strong homotopy can be found in various areas. Such algebras were first introduced by Stasheff \cite{Stasheff:1963a,Stasheff:1963b} and are called $\ai$-algebras; for an introduction see \cite{Keller:2001}. An $\ai$-algebra is a graded module $A$ equipped with higher multiplication maps $\mu_n \colon A^{\otimes n} \to A$ satisfying some relations. One can think of $\mu_1$ as a differential, $\mu_2$ as a multiplication, and $\mu_3$ the homotopy up to which $\mu_2$ is associative.

In homological algebra $\ai$-algebras turn up in several places. Working over a field, one can endow the homology of a differential graded (dg) algebra with an $\ai$-algebra structure. This $\ai$-algebra is quasi-isomorphic to the original dg algebra. On the other hand, working over a commutative ring any projective resolution of a dg algebra admits an $\ai$-algebra structure so that the dg algebra and its resolution are quasi-isomorphic as algebras. These examples are instances of the same phenomena, the \emph{transfer} of an algebra structure along a quasi-isomorphism:

\begin{introthm} \label{mainthm}
Let $Q$ be a commutative ring and $B$ an $\ai$-algebra over $Q$. Given a quasi-isomorphism $\epsilon_1 \colon A \to B$ of complexes with $A$ a bounded below complex of projective modules, there exists an $\ai$-algebra structure on $A$ and a quasi-isomorphism $\epsilon \colon A \to B$ of $\ai$-algebras extending $\epsilon_1$. 

Furthermore, if $B$ is strictly unital with unit $1_B$ and $A$ splits as $A = Q \oplus \bar{A}$ with $\epsilon_1(1_A) = 1_B$ and $\partial(1_A) = 0$, where $1_A$ is a free generator of the summand $Q$ of $A$, then there exists a strictly unital $\ai$-algebra structure on $A$ with the unit $1_A$. 

Moreover, the $\ai$-algebra structure is unique up to $\ai$-homotopy.
\end{introthm}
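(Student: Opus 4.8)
The plan is to recast the whole problem in terms of the tensor coalgebra, using that an $\ai$-structure on $A$ is a square-zero coderivation $b = \sum_{n} b_n$ on $\tcoa{\susp A}$ and that an $\ai$-morphism $A \to B$ is a morphism of differential graded coalgebras $f = \sum_n f_n \colon \tcoa{\susp A} \to \tcoa{\susp B}$, where $b_n$ and $f_n$ are the maps built from $\mu_n$ and $\epsilon_n$. The defining equations $b \circ b = 0$ and $f \circ b = b \circ f$ split by arity, and the component of arity $n$ is affine in the unknowns $(\mu_n, \epsilon_n)$, all of its remaining terms involving only $\mu_{<n}$ and $\epsilon_{<n}$. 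First I would construct the pairs $(\mu_n, \epsilon_n)$ by induction on $n$, the base case being the given differential $\mu_1 = \partial$ on $A$ together with the given quasi-isomorphism $\epsilon_1$.

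The engine of the argument is a single acyclicity input. Assume $\mu_{<n}$ and $\epsilon_{<n}$ have been produced so that all equations of arity below $n$ hold. I would package the unknowns $(\mu_n, \epsilon_n)$ as one map $A^{\otimes n} \to \cone(\epsilon_1)$ and rewrite the two arity-$n$ equations as the single demand that a prescribed map $\omega_n \colon A^{\otimes n} \to \cone(\epsilon_1)$ become a boundary in $\hom{A^{\otimes n}}{\cone(\epsilon_1)}$. The lower relations force $\omega_n$ to be a cycle. Now $\cone(\epsilon_1)$ is acyclic because $\epsilon_1$ is a quasi-isomorphism, and $A^{\otimes n}$ is a bounded below complex of projectives because $A$ is; hence $\hom{A^{\otimes n}}{\cone(\epsilon_1)}$ is acyclic, so $\omega_n$ is a boundary and supplies the desired $(\mu_n, \epsilon_n)$, closing the induction.

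For the strictly unital refinement I would run the same induction over the reduced tensor coalgebra on $\susp\bar A$, defining $\mu_n$ and $\epsilon_n$ only on $\bar A^{\otimes n}$ and extending them to all of $A^{\otimes n}$ by the strict-unit rules, namely $\mu_2(1_A \otimes -) = \mu_2(- \otimes 1_A) = \id$ and $1_A$ annihilating every other operation. Since $\bar A$ is a direct summand of $A$ it is again a bounded below complex of projectives, so the acyclicity of $\hom{\bar A^{\otimes n}}{\cone(\epsilon_1)}$ solves for the reduced components, and one checks that the unit-extended maps still satisfy the unreduced relations. Uniqueness up to $\ai$-homotopy is obtained by the same mechanism raised one level: given two outputs $(\mu, \epsilon)$ and $(\mu', \epsilon')$, the required homotopy is a sequence of maps $h_n$ whose defining equation again isolates $h_n$ linearly against a known cycle, and packaging $h_n$ together with the accompanying adjustment of the structure on $A$ as a map into $\cone(\epsilon_1)$ reduces its existence to the very same acyclicity.

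The step I expect to be the main obstacle is proving that each obstruction $\omega_n$ is genuinely a cycle. This is the familiar sign-laden bookkeeping: one expands $b \circ b$ (respectively $f \circ b - b \circ f$) in arity $n+1$ and shows that, modulo the already-established relations of lower arity, everything cancels. The coalgebra formulation is what keeps the signs uniform and what makes the simultaneous treatment of $\mu_n$ and $\epsilon_n$ through $\cone(\epsilon_1)$ transparent, but verifying closedness of the obstruction remains the calculation requiring the most care.
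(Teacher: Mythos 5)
Your existence argument, for both the non-unital and the strictly unital statements, is essentially the paper's own proof: the same induction on arity, the same bar-construction/coderivation calculus to verify that the obstruction is a cycle (\cref{obs_alg_del,obs_mor_del}), and your acyclicity of $\hom{A^{\otimes n}}{\cone(\epsilon_1)}$ is exactly the content of the paper's key lemma (\cref{null_homotopic_qi}), which packages the two conditions ``$\obs{n}{A}$ is a boundary'' and ``the chosen nullhomotopy maps to the given one'' in cocylinder rather than cone language; the unital refinement by restricting obstructions to $\bar{A}^{\otimes n}$ and extending by the strict-unit rules is likewise the paper's \cref{transfer_unital_ai_alg}.

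The gap is in uniqueness. Two $\ai$-structures $\hat{\mu}$ and $\tilde{\mu}$ on $A$ are not objects that an $\ai$-homotopy can relate: homotopies relate \emph{morphisms} of $\ai$-algebras, so ``unique up to $\ai$-homotopy'' means there is a morphism of $\ai$-algebras $\phi \colon (A,\hat{\mu}) \to (A,\tilde{\mu})$, compatible up to homotopy with the two quasi-isomorphisms to $B$, which is moreover a \emph{homotopy equivalence} of $\ai$-algebras. The part your mechanism does deliver is the comparison morphism: solving simultaneously for $\phi_{n+1}$ and the homotopy component $\sigma_{n+1}$, packaged as a map into the cone of the quasi-isomorphism, is the paper's \cref{lift_ai_mor} --- though note this needs a third obstruction calculus, for homotopies (\cref{obs_htpy_del}), with its own cycle verification, on top of the two you set up for structures and for morphisms; your phrase ``the accompanying adjustment of the structure on $A$'' suggests you are still conflating the morphism data with the (already fixed) structures. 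What your sketch does not produce is a homotopy inverse for $\phi$, and that does not come from solving one more linear equation against the same acyclicity: in the paper one first observes that $\phi_1$ is a quasi-isomorphism, then applies the lifting theorem twice more to obtain $\psi$ and $\psi'$ with $\phi\psi \simeq \id$ and $\psi'\phi \simeq \id$, and finally checks $\psi \simeq \psi'$, so that $\phi$ is a two-sided homotopy equivalence (\cref{unique_transfer_ai_alg}). Without this formal, non-inductive layer you have only shown that any two transferred structures admit a morphism between them, not that they are homotopy equivalent.
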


This result can be found in \cref{unique_transfer_ai_alg,unique_transfer_unital_ai_alg}. It is similar to the results in \cite{Petersen:2020} which uses the language of operads. The aim of this paper is to provide a complete and constructive proof using algebraic language. 

Many previous results focused on the case when $Q$ a field and thus $\epsilon_1$ a homotopy equivalence, or even a strong deformation retract. There are two techniques typically used, the obstruction method and the tensor trick; for an overview see \cite[Section~2]{Johansson/Lambe:2001}. The first result was shown by Kadeishvili \cite{Kadeishvili:1980}; further results appear in \cite{Gugenheim:1982,Gugenheim/Stasheff:1986,Gugenheim/Lambe:1989,Gugenheim/Lambe/Stasheff:1991,Huebschmann/Kadeishvili:1991,Markl:2006,Proute:2011}. Explicit formulas are given in \cite[Theorem~3.4]{Merkulov:1999}, and for strictly unital $\ai$-algebra structures in \cite[Theorem~10]{Cheng/Getzler:2008}.

Burke \cite{Burke:2018} was the first to obtain a transfer result along a quasi-isomorphism of complexes over a commutative ring. He focused on the case when the quasi-isomorphism $\epsilon_1$ is surjective. This simplifies many computations, as the lift of a complex map along a surjective quasi-isomorphism yields a commutative diagram. In contrast, the lift along a quasi-isomorphism yields a diagram that is only commutative up to homotopy. Similar results over commutative rings appear in \cite{Burke:notes} and \cite[Section~5]{Briggs/Cameron/Letz/Pollitz:2025}. 

In this work we use a similar method as \cite{Kadeishvili:1980,Burke:2018}. The strategy is to inductively construct the higher multiplications. We analyze the obstruction of $(\mu_1, \ldots, \mu_n, 0, 0, \ldots)$ being an $\ai$-algebra structure; see \cref{sec:obstruction}. While it is doable to carry out the computations for the obstruction of an $\ai$-algebra structure by hand, the computations for morphisms of $\ai$-algebras and homotopies between them are significantly more cumbersome. To simplify the computations we use the bar construction, which allows us to work with coderivations on the tensor coalgebra. Our proof of the transfer theorem is constructive; it yields a step-by-step instruction on how to construct the $\ai$-algebra structure. 

When working with $\ai$-structures one has to find a balance between writing  out explicit formulas, and avoiding the more technical expressions and computations. While we skip some technical computations in this paper, we write out many formulas explicitly for the convenience of the reader. 

We finish the article by sketching transfer results for $\ai$-module structures in \cref{sec:modules}. The arguments are similar as for $\ai$-algebras, and we skip the proofs.

\begin{ack} 
This article is inspired by the work on \cite{Briggs/Cameron/Letz/Pollitz:2025}, and I want to thank Benjamin Briggs, James Cameron and Josh Pollitz for helpful discussions. The author was partly supported by the Deutsche Forschungsgemeinschaft (SFB-TRR 358/1 2023 - 491392403) and by the Alexander von Humboldt Foundation in the framework of a Feodor Lynen research fellowship endowed by the German Federal Ministry of Education and Research. 
\end{ack}

\section{The tensor coalgebra}

Throughout $Q$ is a fixed commutative ring. Any module, complex or algebra is a $Q$-module, $Q$-complex or $Q$-algebra, respectively, and any map is $Q$-linear. Unadorned tensor products and Hom sets are taken over $Q$. 

For a graded object $M$, we use lower grading in the sense that $M=\{M_i\}_{i\in \BZ}$; unless otherwise specified, graded will mean $\BZ$-graded. The degree of an element $m$ in $M$, denoted $|m|$, is the integer $i$ for which $m\in M_i$. The differential of a complex $M$ is denoted $\partial^M$. 

\begin{discussion}
We let $\susp$ denote the suspension functor: For a graded object $M$, we let $\susp M$ be the graded object with $(\susp M)_i=M_{i-1}$ for all $i\in \BZ$, and there is a naturally defined degree 1 map $\shift \colon M \to \susp M$ given by $m \mapsto \shift m$ where the latter is regarded as an element of $(\susp M)_{|m|+1}=M_{|m|}$. If $M$ is a complex, then we equip $\susp M$ with the differential $\partial^{\susp M} \colonequals -\partial^M$ which makes $\susp M$ a complex. 
\end{discussion}

\begin{discussion}
Let $M,N$ be graded modules. We let $\hom{M}{N}$ denote the graded module with 
\begin{equation*}
\hom[i]{M}{N} \colonequals \prod_{j\in \BZ} \hom{M_j}{N_{i+j}} \quad \text{for all } i \in \BZ\,.
\end{equation*}
When $M,N$ are complexes, we equip $\hom{M}{N}$ with a differential
\begin{equation*}
\partial^{\hom{M}{N}}(f) \colonequals \partial^N f - (-1)^{|f|} f \partial^M
\end{equation*}
making the graded module a complex. A map $f \colon M \to N$ is \emph{null-homotopic}, if it is a boundary; that is there exists $g \colon M \to N$ such that
\begin{equation*}
f = \partial^{\hom{M}{N}}(g) = \partial^N g - (-1)^{|g|} g \partial^M\,.
\end{equation*}
In this case, we say $f$ is null-homotopic with \emph{homotopy} $g$. 
\end{discussion}

\begin{discussion}
Let $M,N$ be graded modules. We let $M \otimes N$ denote the graded module with 
\begin{equation*}
(M \otimes N)_i \colonequals \coprod_{j \in \BZ} M_j \otimes N_{i-j} \quad \text{for all } i \in \BZ\,.
\end{equation*}
For maps of graded modules $f \colon M \to M'$ and $g\colon N \to N'$, we adopt the Koszul sign-rule:
\begin{equation*}
(f\otimes g) (m\otimes n) \colonequals (-1)^{|g||m|}f(m)\otimes g(n)\quad \text{for all } m \in M, n \in N\,.
\end{equation*}
When $M,N$ are complexes, we equip $M \otimes N$ with a differential
\begin{equation*}
\partial^{M \otimes N} \colonequals \partial^M \otimes \id_N + \id_M \otimes \partial^N
\end{equation*}
making the graded module a complex. 
\end{discussion}

\begin{discussion}
For a tuple of positive integers $\bmalpha = (\alpha_1, \ldots, \alpha_p) \in \BN^p$ we write $|\bmalpha| = \alpha_1 + \ldots + \alpha_p$. Given a family of morphisms $f_k \colon M^{\otimes k} \to M$ we set
\begin{equation*}
\bmf^{\otimes \bmalpha} \colonequals f_{\alpha_1} \otimes \cdots \otimes f_{\alpha_p}\,.
\end{equation*}
We use the convention, that $\BN^0 = \{()\}$ is a set with one element and $\bmf^{\otimes ()} \colonequals \id_Q$. In particular $\bmf^{\otimes ()} \otimes g \cong g$ for any map $g$.
\end{discussion}

\begin{discussion}
Let $M$ be a (graded) module. The \emph{tensor coalgebra $\tcoa{M}$ of $M$} has the underlying (bi)graded module 
\begin{equation*}
\tcoa{M} \colonequals \coprod_{k \geqslant 0} \tcoa[k]{M} \quad \text{with} \quad \tcoa[k]{M} \colonequals M^{\otimes k}
\end{equation*}
and the comultiplication
\begin{equation*}
\Delta(m_1 \otimes \ldots \otimes m_k) \colonequals \sum_{i=0}^k (m_1 \otimes \ldots \otimes m_i) \otimes (m_{i+1} \otimes \ldots \otimes m_k)\,.
\end{equation*}
The counit is the natural projection onto tensor degree 0. When $M$ is a graded module, then we denote the grading that $\tcoa{M}$ inherits from $M$ by $\tcoa{M}_\bullet$.

A coalgebra $C$ is \emph{coaugmented}, if there is a morphism of degree zero of coalgebras $\eta \colon Q \to C$, such that $\eta$ is a splitting of the counit as $Q$-modules. The tensor coalgebra is coaugmented with the natural inclusion $\eta \colon Q \to \tcoa{M}$.

A morphism of coaugmented coalgebras $F \colon C \to C'$ is a morphism of coalgebras such that $F \eta = \eta'$. We denote the category of coaugmented graded coalgebras by $\augCoa$. 
\end{discussion}

\begin{discussion}
The morphisms of coaugmented tensor coalgebras are completely determined by the restriction $\tcoa[>0]{M} \to N$. Explicitly, the natural map
\begin{equation*}
\begin{gathered}
\Hom{\augCoa}{\tcoa{M}}{\tcoa{N}} \to \hom{\tcoa[>0]{M}}{N} \,, \\
\text{given by} \quad F \mapsto (\tcoa[>0]{M} \to \tcoa{M} \xrightarrow{F} \tcoa{N} \to \tcoa[1]{N} = N)
\end{gathered}
\end{equation*}
is a bijection; see for example \cite[Lemme~1.1.2.2a]{LefevreHasegawa:2003}. Its inverse is
\begin{equation*}
\begin{gathered}
\cmor \colon \prod_{k \geqslant 1} \hom{M^{\otimes k}}{N} \to \Hom{\augCoa}{\tcoa{M}}{\tcoa{N}} \,, \\
\text{given by} \quad (f_k)_{k \geqslant 1} \mapsto \id_Q + \sum_{k \geqslant 1} \sum_{p=1}^k \sum_{\substack{\bmalpha \in \BN^p\\|\bmalpha|=k}} \bmf^{\otimes \bmalpha}
\end{gathered}
\end{equation*}
when pre-composed with the isomorphism coming from the universal property of the coproduct. The map $\cmor$ is well-defined, since the sum over $(p,\bmalpha)$ is finite for each $k$. We abuse notation and also denote by $\cmor$ the pre-composition of $\cmor$ with the inclusion of $\coprod_{k=1}^n \hom{M^{\otimes k}}{N}$. 
\end{discussion}

\begin{discussion}
Let $F,G \colon C \to C'$ be morphisms of graded coalgebras. A \emph{$(F,G)$-coderivation of degree $d$} is a $Q$-linear map $D \colon C \to C'$ of degree $d$ such that the following diagram commutes
\begin{equation*}
\begin{tikzcd}
C \ar[r,"{\Delta}"] \ar[d,"{D}" swap] \& C \otimes C \ar[d,"{D \otimes G + F \otimes D}"] \\
C' \ar[r,"{\Delta}"] \& C' \otimes C' \nospacepunct{.}
\end{tikzcd}
\end{equation*}
We denote the set of $(F,G)$-coderivations of degree $d$ by $\Coder[d]{F,G}$. When $F = G = \id_C$, we write $\Coder[d]{C} \colonequals \Coder[d]{\id_C,\id_C}$. 

If $D$ is a coderivation of odd degree $d$, then $D^2$ is a coderivation of degree $2d$. The same need not hold for even degree.
\end{discussion}

\begin{discussion}
On the tensor coalgebra a coderivation is completely determined by the restriction $\tcoa{M} \to N$. Explicitly, for morphisms of bigraded coalgebras $F,G \colon \tcoa{M} \to \tcoa{N}$ the natural map
\begin{equation} \label{coder_hom}
\begin{gathered}
\Coder[d]{F,G} \to \hom[d]{\tcoa{M}_\bullet}{N} \\
\text{given by} \quad D \mapsto (\tcoa{M} \xrightarrow{D} \tcoa{N} \to \tcoa[1]{N} = N)
\end{gathered}
\end{equation}
is a bijection; see for example \cite[Lemme~1.1.2.2b]{LefevreHasegawa:2003}. The degree of the domain and the codomain refers to the grading that $\tcoa{M}$ and $\tcoa{N}$ inherit from the grading of $M$ and $N$, respectively. With $F = \cmor((f_k)_{k \geqslant 1})$ and $G = \cmor((g_k)_{k \geqslant 1})$, its inverse is given by
\begin{equation*}
\begin{gathered}
\cder_{(F,G)} \colon \prod_{k \geqslant 0} \hom[d]{M^{\otimes k}}{N} \to \Coder[d]{F,G} \\
\text{given by} \quad (b_k)_{k \geqslant 0} \mapsto \sum_{k \geqslant 0} \sum_{\substack{u+v+w=k\\u,v,w \geqslant 0}} \sum_{p=0}^u \sum_{\substack{\bmalpha \in \BN^p \\ |\bmalpha| = u}} \sum_{q=0}^w \sum_{\substack{\bmbeta \in \BN^q \\ |\bmbeta| = w}} (\bmf^{\otimes \bmalpha} \otimes b_v \otimes \bmg^{\otimes \bmbeta})
\end{gathered}
\end{equation*}
when pre-composed with the isomorphism coming from the universal property of the coproduct. The map is well-defined since the sum over $(u,v,w,p,\bmalpha,q,\bmbeta)$ is finite for each $k$. For convenience we write $\cder \colonequals \cder_{(\id,\id)}$. We abuse notation and also denote by $\cder_{(F,G)}$ the pre-composition of $\cder_{(F,G)}$ with the inclusion of $\coprod_{k=1}^n \hom{M^{\otimes k}}{M}$ for any $n$. 
\end{discussion}

\section{\texorpdfstring{$A_\infty$}{A-infinity}-algebras} \label{sec:aialg}

In this section we give the definition of $\ai$-algebras, and their morphisms and homotopy equivalences. These structures are closely connected to structures on a tensor coalgebra. $\ai$-algebras were introduced by Stasheff \cite{Stasheff:1963a,Stasheff:1963b}; also see \cite{Keller:2001}. 

\begin{discussion}
An \emph{$\ai$-algebra} is a graded module $A = \{A_d\}_{d \in \BZ}$ equipped with maps
\begin{equation*}
\mu^A_k \colon A^{\otimes k} \to A \quad \text{for } k \geq 1
\end{equation*}
of degree $(k-2)$ satisfying the \emph{Stasheff identities}
\begin{equation} \label{alg_stasheff}
\sum_{\substack{u+v+w = k\\u,w \geqslant 0,v \geqslant 1}} (-1)^{u+vw} \mu^A_{u+1+w} (\id_A^{\otimes u} \otimes \mu^A_v \otimes \id_A^{\otimes w}) = 0 \quad \text{for } k \geq 1\,.
\end{equation}
The Stasheff identity is a relation on maps $A^{\otimes k} \to A$. 

For $k=1$, the Stasheff identity is $\mu^A_1 \mu^A_1 = 0$. As $\mu^A_1$ is a map of degree $-1$; this means $A$ can be viewed as a complex with differential $\mu^A_1$. When we want to emphasize the complex structure, we write $\partial^A$ for $\mu^A_1$.
\end{discussion}

At first glance the sign in \cref{alg_stasheff} seems strange. It is in fact chosen so that the identities on $\susp A$ are much simpler.

\begin{discussion}
Let $A$ be an $\ai$-algebra. We define $m^A_k \colon (\susp A)^{\otimes k} \to \susp A$ via $m^A_k \shift^{\otimes k} = -\shift \mu^A_k$. It is straightforward to check, that the $k$th Stasheff identity \cref{alg_stasheff} is equivalent to
\begin{equation*} \label{alg_shift_stasheff}
\sum_{\substack{u+v+w = k\\u,w \geqslant 0,v \geqslant 1}} m^A_{u+1+w} (\id_{\susp A}^{\otimes u} \otimes m^A_v \otimes \id_{\susp A}^{\otimes w}) = 0\,.
\end{equation*}
The shifted multiplication maps $m^A_k$ are all of degree $-1$. Hence an $\ai$-algebra structure $(\mu^A_k)_{k \geqslant 1}$ on a graded module $A$ is equivalent to a coderivation $D \colon \tcoa{\susp A} \to \tcoa{\susp A}$ of degree $-1$ with $D \eta = 0$ and $D^2 = 0$. In fact, the coderivation is given by $D = \cder((m^A_k)_{k \geqslant 1})$. 

What is more, for any $\ai$-algebra $A$ the data $(\tcoa{\susp A},\Delta,D)$ is a dg coalgebra. This is called the \emph{(non-unital) bar construction}. 
\end{discussion}

There are different sign conventions for \cref{alg_stasheff} and the conversion to the shifted multiplication maps; see \cref{sign_convention} for an in-depth discussion.

\begin{discussion}
Let $A$ and $B$ be $\ai$-algebras. A \emph{morphism of $\ai$-algebras} $\phi \colon A \to B$ consists of maps
\begin{equation*}
\phi_k \colon A^{\otimes k} \to B \quad \text{for } k \geq 1
\end{equation*}
of degree $(k-1)$ satisfying
\begin{equation} \label{alg_mor_stasheff}
\sum_{\substack{u+v+w=k \\ u,w \geqslant 0, v \geqslant 1}} (-1)^{u+vw} \phi_{u+1+w} (\id_A^{\otimes u} \otimes \mu^A_v \otimes \id_A^{\otimes w}) = \sum_{p=1}^k \sum_{\substack{\bmalpha \in \BN^p \\ |\bmalpha|=k}} (-1)^{t_1(\bmalpha)} \mu^B_p \bmphi^{\otimes \bmalpha}
\end{equation}
where $t_1(\bmalpha) = \sum_{\ell=1}^p (p-\ell)(\alpha_\ell-1)$. A morphism $\phi \colon A \to B$ is \emph{strict}, if $\phi_k = 0$ for $k \geq 2$. 

For $k=1$, the identity \cref{alg_mor_stasheff} is $\phi_1 \mu^A_1 = \mu^B_1 \phi_1$. As $(A,\mu^A_1)$ and $(B,\mu^B_1)$ are complexes, we can view $\phi_1 \colon A \to B$ as a chain map. We say a morphism of $\ai$-algebras $\phi \colon A \to B$ is a \emph{quasi-isomorphism of $\ai$-algebras}, if $\phi_1$ is a quasi-isomorphism of complexes.
\end{discussion}

\begin{discussion}
We can interpret these morphisms in terms of the tensor coalgebra: We define $f_k \colon (\susp A)^{\otimes k} \to \susp B$ via $f_k \shift^{\otimes k} = \shift \phi_k$. It is straightforward to check, that \cref{alg_mor_stasheff} is equivalent to
\begin{equation*}
\sum_{\substack{u+v+w=k \\ u,w \geqslant 0, v \geqslant 1}} f_{u+1+w} (\id_A^{\otimes u} \otimes m^A_v \otimes \id_A^{\otimes w}) = \sum_{p=1}^k \sum_{\substack{\bmalpha \in \BN^p \\ |\bmalpha|=k}} m^B_p \bmf^{\otimes \bmalpha}\,.
\end{equation*}
The shifted maps $f_k$ are of degree zero. A morphism of $\ai$-algebras $(\phi_k)_{k \geqslant 1}$ is equivalent to a morphism of coaugmented coalgebras $F \colon \tcoa{\susp A} \to \tcoa{\susp B}$ such that $F D^A = D^B F$ where $D^A$ and $D^B$ the coderivations induced by the $\ai$-algebra structures on $A$ and $B$, respectively. In fact, the morphism of coaugmented coalgebras is given by $F = \cmor((f_k)_{k \geqslant 1})$. 
\end{discussion}

\begin{discussion} \label{alg_mor_composition}
Let $\phi \colon A \to B$ and $\psi \colon B \to C$ be morphisms of $\ai$-algebras. Their composition is given by
\begin{equation*}
(\psi \circ \phi)_k \colonequals \sum_{p=1}^k \sum_{\substack{\bmalpha \in \BN^p\\|\bmalpha|=k}} (-1)^{t_1(\bmalpha)} \psi_p \bmphi^{\otimes \bmalpha} \quad \text{for } k \geq 1\,.
\end{equation*}
It is straightforward to check that this is a morphism of $\ai$-algebras. On the tensor coalgebras this corresponds to the usual composition $\cmor((g_k)_{k \geqslant 1}) \circ \cmor((f_k)_{k \geqslant 1}) = \cmor((h_k)_{k \geqslant 1})$ where $f_k$, $g_k$ and $h_k$ are the shifted morphisms corresponding to $\phi_k$, $\psi_k$ and $(\psi \phi)_k$, respectively.

The identity morphism $\id_A$ of an $\ai$-algebra $A$ is the strict morphism of $\ai$-algebra with $(\id_A)_1$ the identity on the module $A$.
\end{discussion}

\begin{discussion}
Let $\phi, \psi \colon A \to B$ be morphisms of $\ai$-algebras. We say $\phi$ and $\psi$ are \emph{homotopy equivalent}, and write $\phi \simeq \psi$, if there are maps
\begin{equation*}
\sigma_k \colon A^{\otimes k} \to B \quad \text{for } k \geqslant 1
\end{equation*}
of degree $k$ satisfying
\begin{equation} \label{alg_htpy_stasheff}
\begin{aligned}
\phi_k - & \psi_k = \sum_{\substack{u+v+w=k\\u,w \geqslant 0, v \geqslant 1}} (-1)^{u+vw} \sigma_{u+1+w} (\id_A^{\otimes u} \otimes \mu^A_v \otimes \id_A^{\otimes w}) \\
& + \sum_{\substack{u+v+w=k\\u,w \geqslant 0, v \geqslant 1}} \sum_{p=0}^v \sum_{\substack{\bmalpha \in \BN^p \\ |\bmalpha| = v}} \sum_{q=0}^w \sum_{\substack{\bmbeta \in \BN^q \\ |\bmbeta| = w}} (-1)^{u+vq+t_2(\bmalpha,\bmbeta)} \mu^B_{p+1+q} (\bmphi^{\otimes \bmalpha} \otimes \sigma_v \otimes \bmpsi^{\otimes \bmbeta})\,,
\end{aligned}
\end{equation}
where $t_2(\bmalpha,\bmbeta) = t_1(\bmalpha) + t_1(\bmbeta) + (u-p)(q+1)$. We call $\sigma$ an \emph{$\ai$-homotopy} between $\phi$ and $\psi$.

A morphism of $\ai$-algebras $\phi \colon A \to B$ is a \emph{homotopy equivalence} of $\ai$-algebras, if there exists a morphism of $\ai$-algebras $\psi \colon B \to A$ such that $\psi \phi \simeq \id_A$ and $\phi \psi \simeq \id_B$. 
\end{discussion}

\begin{discussion}
On the tensor coalgebras $\ai$-homotopies correspond to derivations: We define $s_k \colon (\susp A)^{\otimes k} \to \susp B$ via $s_k \shift^{\otimes k} = - \shift \sigma_k$. It is straightforward to check that \cref{alg_htpy_stasheff} is equivalent to
\begin{equation*} \label{alg_htpy_shift_stasheff}
\begin{aligned}
f_k - & g_k = \sum_{\substack{u+v+w=k\\u,w \geqslant 0, v \geqslant 1}} s_{u+1+w} (\id_A^{\otimes u} \otimes m^A_v \otimes \id_A^{\otimes w}) \\
& +\sum_{\substack{u+v+w=k\\u,w \geqslant 0, v \geqslant 1}} \sum_{p=0}^u \sum_{\substack{\bmalpha \in \BN^p \\ |\bmalpha| = u}} \sum_{q=0}^w \sum_{\substack{\bmbeta \in \BN^q \\ |\bmbeta| = w}} m^B_{p+1+q} (\bmf^{\otimes \bmalpha} \otimes s_v \otimes \bmg^{\otimes \bmbeta})\,,
\end{aligned}
\end{equation*}
where $f_k$ and $g_k$ are the shifted maps corresponding to $\phi_k$ and $\psi_k$, respectively. The shifted maps $s_k$ are all of degree $1$. Hence an $\ai$-homotopy $(\sigma_k)_{k \geqslant 1}$ is equivalent to a $(F,G)$-coderivation $S \colon \tcoa{\susp A} \to \tcoa{\susp B}$ of degree $1$ with $F-G = D^B S + S D^A$, where $F$ and $G$ are the coalgebra morphisms corresponding to $\phi$ and $\psi$, respectively, and $D^A$ and $D^B$ the coderivations induced by the $\ai$-algebra structures on $A$ and $B$, respectively. In fact, the homotopy is given by $S = \cder_{(F,G)}((s_k)_{k \geqslant 1})$. 
\end{discussion}

\section{Obstructions} \label{sec:obstruction}

In this section we show the technical results used in the proof of the transfer theorem. For this we analyze the terms of the $k$th Stasheff identity, that do not contain $\mu^A_k$. Some of the results are mentioned in \cite[Section~B.1]{LefevreHasegawa:2003}.

\begin{discussion}
For a positive integer $n$, a graded module $A$ equipped with multiplication maps $\mu^A_1, \ldots, \mu^A_n$ satisfying the Stasheff identities \cref{alg_stasheff} for $1 \leq k \leq n$, is called an \emph{$\an{n}$-algebra}. Similar as above, an $\an{n}$-algebra structure $(\mu^A_k)_{k \leqslant n}$ on $A$ induces a coderivation $D = \cder(m^A_1, \ldots, m^A_n)$, where $m^A_1, \ldots, m^A_n$ are the shifted multiplication maps. 

For an $\an{n}$-algebra, we set
\begin{equation*}
\obs{n}{A} \colonequals \sum_{\substack{u+v+w = n+1\\u,w \geqslant 0,n \geqslant v \geqslant 2}} (-1)^{u+vw} \mu^A_{u+1+w} (\id_A^{\otimes u} \otimes \mu^A_v \otimes \id_A^{\otimes w})\,.
\end{equation*}
This is a map $A^{\otimes (n+1)} \to A$ of degree $n-2$. Rewriting the Stasheff identity \cref{alg_stasheff} for $n+1$ yields: A map $\mu^A_{n+1} \colon A^{\otimes (n+1)} \to A$ makes $A$ an $\an{n+1}$-algebra if and only if
\begin{equation} \label{obs_alg_stasheff}
\obs{n}{A} = -\partial(\mu^A_{n+1})\,;
\end{equation}
that is $-\obs{n}{A}$ is null-homotopic as a map of complexes with homotopy $\mu^A_{n+1}$. 
\end{discussion}

The following identity is clear for an $\an{n+1}$-algebra, but it even holds for an $\an{n}$-algebra.

\begin{lemma} \label{obs_alg_del}
For any $\an{n}$-algebra $A$ one has
\begin{equation*}
\partial(\obs{n}{A}) = 0\,.
\end{equation*}
\end{lemma}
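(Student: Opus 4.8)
The plan is to pass to the bar construction and argue with the induced coderivation, as elsewhere in the paper. Recall that the $\an{n}$-algebra structure gives a coderivation $D = \cder(m^A_1, \ldots, m^A_n)$ on $\tcoa{\susp A}$ of degree $-1$, where the $m^A_k$ are the shifted multiplications. Since $D$ has odd degree, $D^2$ is a coderivation of degree $-2$, hence by the bijection \cref{coder_hom} it is determined by its corestriction $b = \pi_1 D^2 \colon \tcoa{\susp A}_\bullet \to \susp A$. The first step is to identify the components $b_k \colon (\susp A)^{\otimes k} \to \susp A$. Writing $D = \cder(m^A_\bullet)$ and noting that $\pi_1 D$ selects the single nontrivial factor, the corestriction of $D \circ D$ decomposes as one inner factor surrounded by identities, so that $b_k = \sum_{u+v+w=k} \pm\, m^A_{u+1+w}(\id^{\otimes u} \otimes m^A_v \otimes \id^{\otimes w})$, which is the left-hand side of the $k$th shifted Stasheff identity. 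For $k \leq n$ this vanishes because $A$ is an $\an{n}$-algebra, and for $k = n+1$, since $m^A_{n+1} = 0$, the terms with $v=1$ (whose outer factor would be $m^A_{n+1}$) and the term with $v = n+1$ drop out; what remains are exactly the terms with $2 \leq v \leq n$ and $2 \leq u+1+w \leq n$, so that $b_{n+1}$ is the shifted analogue of $\obs{n}{A}$.

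The key identity is the associativity relation $D \circ D^2 = D^2 \circ D$. I corestrict both sides to tensor degree $1$ on inputs of tensor degree $n+1$. Writing each coderivation as $\cder$ of its corestriction, the corestriction of a composite again decomposes as a single inner factor flanked by identities, giving $\pi_1(D \circ D^2)|_{\tcoa[n+1]{\susp A}} = \sum_{u+v+w=n+1} \pm\, m^A_{u+1+w}(\id^{\otimes u} \otimes b_v \otimes \id^{\otimes w})$ and $\pi_1(D^2 \circ D)|_{\tcoa[n+1]{\susp A}} = \sum_{u+v+w=n+1} \pm\, b_{u+1+w}(\id^{\otimes u} \otimes m^A_v \otimes \id^{\otimes w})$. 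On the left only $v = n+1$ survives (forcing $u=w=0$), contributing $\pm\, m^A_1 \circ b_{n+1}$; on the right only $u+1+w = n+1$ survives (forcing $v=1$), contributing $\sum_{u+w=n} \pm\, b_{n+1}(\id^{\otimes u} \otimes m^A_1 \otimes \id^{\otimes w})$. Equating the two sides shows that $b_{n+1}$ is a cycle for the differential on $\hom{(\susp A)^{\otimes(n+1)}}{\susp A}$ induced by $m^A_1$.

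Finally, translating this cycle condition back through the shift dictionary $m^A_k \shift^{\otimes k} = -\shift\mu^A_k$ converts the statement about $b_{n+1}$ into $\partial(\obs{n}{A}) = 0$, where $\partial$ is the differential on $\hom{A^{\otimes(n+1)}}{A}$. The only delicate point, and the main obstacle, is the bookkeeping of Koszul signs, both in the corestriction formula for composites and in the shift isomorphism; once the computation is organized on $\susp A$ these reduce to the mild signs of the shifted Stasheff identities and the two surviving sums match. An entirely elementary alternative is to expand $\partial(\obs{n}{A}) = \mu^A_1\,\obs{n}{A} - (-1)^{n}\obs{n}{A}\,\partial^{A^{\otimes(n+1)}}$ directly and cancel terms using the Stasheff identities \cref{alg_stasheff} for $k \leq n$, but this route is considerably more sign-intensive, which is precisely why the coderivation formulation is preferable.
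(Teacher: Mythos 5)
Your proof is correct and follows essentially the same route as the paper: both pass to the bar construction, identify the shifted obstruction as the tensor-degree $(n+1) \to 1$ component of $D^2$ (which vanishes in lower tensor degrees by the $\an{n}$-identities), show that this component commutes with the differential induced by $m^A_1$, and then unshift to get $\partial(\obs{n}{A}) = 0$. The only difference is bookkeeping in the middle step: where you invoke the coderivation property of $D^2$ (odd degree) together with the associativity $D \circ D^2 = D^2 \circ D$, the paper decomposes $D = D_1 + \bar{D}$ and derives the identity $D_1 \bar{D}^2 = \bar{D}^2 D_1$ from the vanishing of $D^2\bar{D}$ and $\bar{D}D^2$ on $(\susp A)^{\otimes(n+1)} \to \susp A$ --- the two derivations produce literally the same equation, so your argument is a faithful, if slightly repackaged, version of the paper's proof.
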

\begin{proof}
We set $D \colonequals \cder(m^A_1, \ldots, m^A_n)$ and $D_1 \colonequals \cder(m^A_1)$. Then $D_1^2 = 0$ and, since $A$ is an $\an{n}$-algebra, the restriction of $D^2$ to $(\susp A)^{\otimes (k+i)} \to (\susp A)^{\otimes i}$ is zero for $k < n$ and $i \geq 1$. By definition, the restriction of $\bar{D} \colonequals D - D_1$ to $(\susp A)^{\otimes i} \to (\susp A)^{\otimes (i+j)}$ is zero for $i \geq 1$ and $j \geq 0$. Hence
\begin{equation*}
D^2 \bar{D} = 0 = \bar{D} D^2
\end{equation*}
when restricted to $(\susp A)^{\otimes (n+1)} \to \susp A$. By straightforward computations we have
\begin{equation*}
D_1 \bar{D}^2 = \bar{D}^2 D_1
\end{equation*}
when restricted to $(\susp A)^{\otimes (n+1)} \to \susp A$. After replacing the shifted multiplications $m^A_k$ by the multiplications $\mu^A_k$, we obtain
\begin{equation*}
-\partial^A \obs{n}{A} = (-1)^{n+1} \obs{n}{A} \partial^{A^{\otimes (n+1)}}\,.
\end{equation*}
We used that 
\begin{equation*}
(\id_{\susp A}^{\otimes u} \otimes m^A_1 \otimes \id_{\susp A}^{\otimes w}) \shift^{\otimes (n+1)} = (-1)^{n+1} \shift^{\otimes (n+1)} (\id_A^{\otimes u} \otimes \mu^A_1 \otimes \id_A^{\otimes w})\,.
\end{equation*}
for $u+v = n$. As $|\obs{n}{A}| = n-2$ we showed the desired identity.
\end{proof}

\begin{discussion}
A morphism $\phi \colon A \to B$ of $\an{n}$-algebras consists of morphisms $\phi_1, \ldots, \phi_n$ satisfying \cref{alg_mor_stasheff} for $1 \leq k \leq n$. Similarly as above, a morphism of $\an{n}$-algebras $(\phi_k)_{k \leqslant n}$ induces a morphism of coaugmented coalgebras $\cmor(f_1, \ldots, f_n)$, where $f_1, \ldots, f_n$ are the shifted morphisms. 

For a morphism of $\an{n}$-algebras $\phi \colon A \to B$ we set
\begin{equation*}
\begin{aligned}
\obs{n}{\phi} \colonequals \sum_{p=2}^n & \sum_{\substack{\bmalpha \in \BN^p \\ |\bmalpha|=n+1}} (-1)^{t_1(\bmalpha)} \mu^B_p \bmphi^{\otimes \bmalpha} \\
&- \sum_{\substack{u+v+w=n+1 \\ u,w \geqslant 0, n \geqslant v \geqslant 2}} (-1)^{u+vw} \phi_{u+1+w} (\id_A^{\otimes u} \otimes \mu^A_v \otimes \id_A^{\otimes w})\,.
\end{aligned}
\end{equation*}
This is a map $A^{\otimes (n+1)} \to B$ of degree $n-1$. Rewriting the identity \cref{alg_mor_stasheff} for $n+1$ yields: If $A$ and $B$ are $\an{n+1}$-algebras, then a map $\phi_{n+1} \colon A^{\otimes (n+1)} \to B$ makes $\phi$ a morphism of $\an{n+1}$-algebras, if and only if
\begin{equation} \label{obs_alg_mor_stasheff}
\obs{n}{\phi} = \partial(\phi_{n+1}) + \mu^B_{n+1} \phi_1^{\otimes (n+1)} - \phi_1 \mu^A_{n+1}\,.
\end{equation}
\end{discussion}

\begin{lemma} \label{obs_mor_del}
For any morphism of $\an{n}$-algebras $\phi \colon A \to B$ one has
\begin{equation*}
\partial(\obs{n}{\phi}) = \phi_1 \obs{n}{A} - \obs{n}{B} \phi_1^{\otimes (n+1)}\,.
\end{equation*}
\end{lemma}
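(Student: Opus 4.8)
The plan is to lift everything to the bar construction and exploit a simple telescoping identity, in close analogy with the proof of \cref{obs_alg_del}. Write $D^A \colonequals \cder(m^A_1, \ldots, m^A_n)$ and $D^B \colonequals \cder(m^B_1, \ldots, m^B_n)$ for the degree $-1$ coderivations induced by the two $\an{n}$-structures, and $F \colonequals \cmor(f_1, \ldots, f_n)$ for the coalgebra morphism induced by $\phi$, where the $f_k$ are the shifted components. Set $G \colonequals FD^A - D^B F$. Because $F$ is a morphism of coalgebras and $D^A,D^B$ are coderivations, both $FD^A$ and $D^B F$ are $(F,F)$-coderivations, hence so is $G$, and it has degree $-1$. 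Since $\phi$ is a morphism of $\an{n}$-algebras, the corestriction $g$ of $G$ (the composite $\tcoa{\susp A} \xrightarrow{G} \tcoa{\susp B} \to \susp B$) vanishes on $(\susp A)^{\otimes k}$ for $k \leq n$. In tensor degree $n+1$, the maps $f_{n+1}, m^A_{n+1}, m^B_{n+1}$ all vanish, so every term involving them drops out of $g$ and $g|_{(\susp A)^{\otimes(n+1)}}$ is precisely the shift of $\obs{n}{\phi}$.

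First I would record the telescoping identity
\begin{equation*}
GD^A + D^B G = F(D^A)^2 - (D^B)^2 F \,,
\end{equation*}
which is immediate from $GD^A = F(D^A)^2 - D^B F D^A$ and $D^B G = D^B F D^A - (D^B)^2 F$. This is the morphism analogue of expanding $D^2$ in \cref{obs_alg_del}. I would then take the corestriction of both sides and restrict the source to $(\susp A)^{\otimes(n+1)}$.

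For the left-hand side, split $D^A = \cder(m^A_1) + \bar D^A$. Since $\bar D^A$ strictly lowers tensor degree and $g$ vanishes in tensor degrees $\leq n$, only the degree-preserving part $\cder(m^A_1)$ survives in the corestriction $g \circ D^A$ of $GD^A$, giving $g$ precomposed with the internal differential. Dually, by the $(F,F)$-coderivation formula $G = \cder_{(F,F)}(g)$, only the middle term $g_{n+1}$ contributes on $(\susp A)^{\otimes(n+1)}$ (as $g_v = 0$ for $v \leq n$), so $G$ lands in tensor degree $1$ and the corestriction of $D^B G$ reduces to $g$ postcomposed with $m^B_1$. Together these assemble into the differential of $g$ in $\hom{\tcoa{\susp A}_\bullet}{\susp B}$, which unshifts to $\partial(\obs{n}{\phi})$. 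For the right-hand side I would use that $(D^A)^2$ and $(D^B)^2$ are coderivations whose corestrictions vanish in tensor degrees $\leq n$ and equal the shifts of $\obs{n}{A}$ and $\obs{n}{B}$ in tensor degree $n+1$, which is exactly \cref{obs_alg_del}. Hence $(D^A)^2$ sends $(\susp A)^{\otimes(n+1)}$ into tensor degree $1$ via the shift of $\obs{n}{A}$, and corestricting $F(D^A)^2$ applies $f_1$, giving the shift of $\phi_1 \obs{n}{A}$; symmetrically, corestricting $(D^B)^2 F$ gives the shift of $\obs{n}{B}$ composed with the tensor-degree-$(n+1)$ part of $F$ on $(\susp A)^{\otimes(n+1)}$, namely $f_1^{\otimes(n+1)}$, yielding the shift of $\obs{n}{B}\phi_1^{\otimes(n+1)}$. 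Equating the two sides and unshifting gives the claim.

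The main obstacle is not the structure of the argument but the sign bookkeeping: one must check that the corestriction of $GD^A + D^B G$ really is the Hom-complex differential of $g$ with the correct Koszul sign, and that the shift dictionary $m^A_k \shift^{\otimes k} = -\shift\mu^A_k$ and $f_k \shift^{\otimes k} = \shift\phi_k$ converts the shifted identity into the stated signs. These verifications are routine but tedious, exactly as in \cref{obs_alg_del}, so I would carry them out only schematically.
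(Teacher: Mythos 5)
Your proof is correct and takes essentially the same route as the paper's: both pass to the bar construction, identify the shift of $\obs{n}{\phi}$ with the corestriction of $FD^A - D^BF$ in tensor degree $n+1$, and reduce the claim to an operator identity on $\tcoa{\susp A}$ whose corestriction is then unshifted with the usual sign bookkeeping. Your exact telescoping identity $GD^A + D^B G = F(D^A)^2 - (D^B)^2 F$, combined with the $(F,F)$-coderivation property of $G$, is a slightly cleaner global packaging of the restricted identities the paper verifies via the decompositions $F = F_1 + \bar F$ and $D = D_1 + \bar D$, but the underlying computation and the appeal to \cref{obs_alg_del}-style arguments are the same.
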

\begin{proof}
Let $D^A$, $\bar{D}^A$, $D^B$ and $\bar{D}^B$ be the coderivations induced from the multiplications on $A$ and $B$, respectively, as in \cref{obs_alg_del}. We set
\begin{equation*}
\begin{gathered}
F \colonequals \cmor(f_1, \ldots, f_n) \,,\quad F_1 \colonequals \cmor(f_1) \quad \text{and} \quad \bar{F} \colonequals F - F_1\,.
\end{gathered}
\end{equation*}
By the same argument as in \cref{obs_alg_del} we have
\begin{equation*}
\bar{D}^B F D^A = \bar{D}^B D^B F \,,\quad F D^A \bar{D}^A = D^B F \bar{D}^A \quad \text{and} \quad \bar{F} D^A D^A = 0 = D^B D^B \bar{F}
\end{equation*}
when restricted to $(\susp A)^{\otimes (n+1)} \to \susp B$. By straightforward computations we have
\begin{equation*}
D^B_1 (\bar{F} \bar{D}^A - \bar{D}^B \bar{F}) + (\bar{F} \bar{D}^A - \bar{D}^B \bar{F})D^A_1 = F_1 \bar{D}^A \bar{D}^A - \bar{D}^B \bar{D}^B F_1
\end{equation*}
when restricted to $(\susp A)^{\otimes (n+1)} \to \susp B$. Replacing the shifted maps by the maps of the $\ai$-structures, we obtain
\begin{equation*}
\partial^B \obs{n}{\phi} + (-1)^n \obs{n}{\phi} \partial^{A^{\otimes (n+1)}} = \phi_1 \obs{n}{A} - \obs{n}{B} \phi_1\,.
\end{equation*}
This is the desired identity.
\end{proof}

\begin{lemma} \label{obs_comp}
For morphisms $\phi \colon A \to B$ and $\psi \colon B \to C$ of $\an{n}$-algebras one has
\begin{equation*}
\obs{n}{\psi \circ \phi} = \psi_1 \obs{n}{\phi} + \obs{n}{\psi} \phi_1^{\otimes (n+1)} + \partial(\sum_{p=2}^n \sum_{\substack{\bmalpha \in \BN^p\\|\bmalpha|=n+1}} (-1)^{t_1(\bmalpha)} \psi_p \bmphi^{\otimes \bmalpha})\,,
\end{equation*}
where $\psi \circ \phi$ is the composition of morphisms of $\ai$-algebras defined in \cref{alg_mor_composition}. 
\end{lemma}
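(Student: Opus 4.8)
The plan is to pass to the bar construction and factor the composite coalgebra morphism, exactly as in the proofs of \cref{obs_alg_del,obs_mor_del}.

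First I would introduce the coderivations $D^A,D^B,D^C$ induced by the $\an{n}$-structures and the coalgebra morphisms $F \colonequals \cmor(f_1,\ldots,f_n)$ and $G \colonequals \cmor(g_1,\ldots,g_n)$ attached to $\phi$ and $\psi$, where $f_k,g_k$ are the shifted components. The composite $\psi\phi$ corresponds to $H \colonequals GF$, and the key bookkeeping observation is that, although neither $F$ nor $G$ has a component in tensor degree $n+1$, the product $H = \cmor(h_1,h_2,\ldots)$ does: its degree-$(n+1)$ component $h_{n+1}$ is the shift of $\sum_{p=2}^{n}\sum_{\bmalpha\in\BN^p,\,|\bmalpha|=n+1}(-1)^{t_1(\bmalpha)}\psi_p\bmphi^{\otimes\bmalpha}$, which is precisely the map inside the boundary term of the claim. (The summands $p=1$ and $p=n+1$ of the full $(\psi\phi)_{n+1}$ are absent, since they would require $\phi_{n+1}$ and $\psi_{n+1}$.) Because $\obs{n}{\psi\phi}$ depends only on $(\psi\phi)_1,\ldots,(\psi\phi)_n$, it is the tensor-degree-$1$ part, in source degree $n+1$, of $\tilde H D^A - D^C\tilde H$, where $\tilde H\colonequals\cmor(h_1,\ldots,h_n)$ is the truncation of $H$.

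The heart of the argument is the factorization
\begin{equation*}
H D^A - D^C H = G\,(F D^A - D^B F) + (G D^B - D^C G)\,F\,.
\end{equation*}
By the computation of \cref{obs_mor_del}, $F D^A - D^B F$ is an $(F,F)$-coderivation and $G D^B - D^C G$ is a $(G,G)$-coderivation; under the bijection \cref{coder_hom} their tensor-degree-$1$ components in source degree $k$ are the shifted $k$th morphism Stasheff identities, so they vanish for $k\leqslant n$ and equal the shifts of $\obs{n}{\phi}$ and $\obs{n}{\psi}$ in source degree $n+1$. Writing $F D^A - D^B F$ as $\cder_{(F,F)}$ of these components, the vanishing below degree $n+1$ forces its restriction $(\susp A)^{\otimes(n+1)}\to(\susp B)^{\otimes p}$ to be zero unless $p=1$, where it is the shift of $\obs{n}{\phi}$; composing with the degree-$1$ part of $G$ extracts $g_1$ and produces the shift of $\psi_1\obs{n}{\phi}$. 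Dually, restricting $(G D^B - D^C G)F$ to source degree $n+1$ and tensor degree $1$, only the component $f_1^{\otimes(n+1)}$ of $F$ survives, since every other component of $F$ is paired with a vanishing component of $G D^B - D^C G$; this gives the shift of $\obs{n}{\psi}\phi_1^{\otimes(n+1)}$.

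Finally I would compare the tensor-degree-$1$, source-degree-$(n+1)$ parts of $H D^A - D^C H$ and of $\tilde H D^A - D^C\tilde H$. Since $H$ and $\tilde H$ agree in all tensor degrees $\leqslant n$, and only $h_1,\ldots,h_{n+1}$ enter in source degree $n+1$, the difference is contributed solely by the extra component $h_{n+1}$, namely the two terms $h_{n+1}\bigl(\sum_{a+b=n}\id_{\susp A}^{\otimes a}\otimes m^A_1\otimes\id_{\susp A}^{\otimes b}\bigr)$ and $-\,m^C_1\,h_{n+1}$, which assemble into the shift of $\partial(h_{n+1})$. Replacing the shifted maps by the maps of the $\ai$-structures then yields the stated identity. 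The hard part will be the sign and degree bookkeeping throughout: confirming that the degree-$1$ component of $F D^A - D^B F$ reproduces $\obs{n}{\phi}$ exactly rather than up to sign, and that the correction assembles with the correct sign into $+\,\partial\bigl(\sum_{p=2}^n\sum_{\bmalpha}(-1)^{t_1(\bmalpha)}\psi_p\bmphi^{\otimes\bmalpha}\bigr)$ and not its negative.
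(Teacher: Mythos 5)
Your proposal is correct, and it works in the same framework as the paper's proof --- both pass to the bar construction and factor the defect of $GF$ commuting with the coderivations --- but your decomposition is genuinely different from the one the paper writes down. The paper works with the barred operators $\bar{F} = F - F_1$, $\bar{G} = G - G_1$, $\bar{D} = D - D_1$ and verifies by direct computation, after restriction to $(\susp A)^{\otimes (n+1)} \to \susp C$, the single identity
\begin{equation*}
\begin{aligned}
(GF - G_1 F_1) \bar{D}^A - \bar{D}^C (GF - G_1 F_1) = {} & G_1 (\bar{F}\bar{D}^A - \bar{D}^B\bar{F}) + (\bar{G}\bar{D}^B - \bar{D}^C\bar{G}) F_1 \\
& + D^C_1 \bar{G}\bar{F} - \bar{G}\bar{F} D^A_1\,,
\end{aligned}
\end{equation*}
whose four right-hand terms are visibly the shifts of the four terms of the lemma; the bars do the truncation work automatically, since $\bar{D}^A$ and $\bar{D}^C$ kill the contribution of the extra tensor-degree-$(n+1)$ component $h_{n+1}$ of $GF$, so the left side is the shifted $\obs{n}{\psi\phi}$ on the nose. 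You instead start from the exact telescoping identity $HD^A - D^C H = G(FD^A - D^B F) + (GD^B - D^C G)F$, which costs nothing to verify, and pay for it afterwards: you invoke the bijection \cref{coder_hom} to see that $FD^A - D^B F$ and $GD^B - D^C G$ are concentrated in tensor degree $1$ on source degree $n+1$ (so that only $g_1$, respectively $f_1^{\otimes (n+1)}$, survives the composition), and you separately compare $H$ with its truncation $\tilde{H} = \cmor(h_1, \ldots, h_n)$ to produce the boundary term. All of these steps check out --- in particular your identification of $h_{n+1}$ with the shift of the map inside the boundary term, the vanishing of the degree-$1$ components of $FD^A - D^B F$ in source degrees $\leqslant n$ via the morphism Stasheff identities, and the fact that $H - \tilde{H}$ restricted to source degree $n+1$ is concentrated in tensor degree $1$ --- and your route arguably explains better \emph{why} the identity holds, replacing the paper's ``straightforward computations'' by structural facts about coderivations and coalgebra morphisms. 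What the paper's version buys is brevity: one computation, with the extraction of terms immediate because $F_1$, $G_1$ and the bars appear explicitly. The sign bookkeeping you flag at the end is deferred in both proofs to the same final unshifting step (``replacing the shifted maps by the maps of the $\ai$-structures''), so it is no more of a gap in your write-up than it is in the paper's.
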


Morally, the last summand consists of the terms of $(\psi \circ \phi)_{n+1}$, which do not contain $\phi_{n+1}$ or $\psi_{n+1}$. 

\begin{proof}
We use the same notation and arguments as in \cref{obs_alg_del,obs_mor_del}, and further define $G$, $G_1$ and $\bar{G}$ using $\psi$ analogously as $F$, $F_1$ and $\bar{F}$ are defined using $\phi$. By straightforward computations we have
\begin{equation*}
\begin{aligned}
(GF - G_1 F_1) & \bar{D}^A - \bar{D}^C (GF - G_1 F_1) \\
&= G_1 (\bar{F}\bar{D}^A -\bar{D}^B\bar{F}) + (\bar{G}\bar{D}^B-\bar{D}^C\bar{G}) F_1 + D^C_1\bar{G}\bar{F} - \bar{G}\bar{F} D^A_1
\end{aligned}
\end{equation*}
when restricted to $(\susp A)^{\otimes (n+1)} \to \susp C$. Replacing the shifted maps by the maps of the $\ai$-structures, we obtain the desired identity.
\end{proof}

\begin{discussion}
An $\an{n}$-homotopy $\sigma$ of morphisms of $\an{n}$-algebras $\phi, \psi \colon A \to B$ consists of maps $\sigma_1, \ldots, \sigma_n$ satisfying \cref{alg_htpy_stasheff} for $1 \leq k \leq n$. Similarly as above, an $\an{n}$-homotopy $(\sigma_k)_{k \leqslant n}$ induces a $(F,G)$-coderivation $S = \cder_{(F,G)}(s_1, \ldots, s_n)$ of degree 1, where $F = \cmor(f_1, \ldots, f_n)$ and $G = \cmor(g_1, \ldots, g_n)$, and $s_1, \ldots, s_n$ are the shifted maps.

For an $\an{n}$-homotopy $\sigma$ of $\phi, \psi \colon A \to B$ we set
\begin{equation*}
\begin{aligned}
& \obs{n}{\sigma} \colonequals \sum_{\substack{u+v+w=n+1\\u,w \geqslant 0, n \geqslant v \geqslant 2}} (-1)^{u+vw} \sigma_{u+1+w} (\id_A^{\otimes u} \otimes \mu^A_v \otimes \id_A^{\otimes w}) \\
& + \sum_{\substack{u+v+w=n+1\\u,w \geqslant 0, n \geqslant v \geqslant 1}} \sum_{p=0}^u \sum_{\substack{\bmalpha \in \BN^p \\ |\bmalpha| = u}} \sum_{\substack{q=0 \\ p+q < n}}^w \sum_{\substack{\bmbeta \in \BN^q \\ |\bmbeta| = w}} (-1)^{u+vq+t_2(\bmalpha,\bmbeta)} \mu^B_{p+1+q} (\bmphi^{\otimes \bmalpha} \otimes \sigma_v \otimes \psi^{\otimes \bmbeta})\,.
\end{aligned}
\end{equation*}
This is a map $A^{\otimes (n+1)} \to B$ of degree $n$. Rewriting the identity \cref{alg_htpy_stasheff} for $n+1$ yields: If $\phi, \psi$ are morphisms of $\an{n+1}$-algebras, then a map $\sigma_{n+1}$ makes $\sigma$ an $\an{n+1}$-homotopy if and only if
\begin{equation} \label{obs_alg_htpy_stasheff}
\begin{aligned}
\phi_{n+1} - \psi_{n+1} = \obs{n}{\sigma} & + \partial(\sigma_{n+1}) + \sigma_1 \mu^A_{n+1} \\
&+ (-1)^n \mu^B_{n+1} \sum_{\substack{u+w=n\\u,w \geqslant 0}} (\phi_1^{\otimes u} \otimes \sigma_1 \otimes \psi_1^{\otimes w})\,.
\end{aligned}
\end{equation}
\end{discussion}

\begin{lemma} \label{obs_htpy_del}
For any $\an{n}$-homotopy $\sigma$ of morphisms of $\an{n}$-algebras $\phi, \psi \colon A \to B$ one has
\begin{equation*}
\begin{aligned}
\obs{n}{\phi} - \obs{n}{\psi} =  \partial(&\obs{n}{\sigma}) + \sigma_1 \obs{n}{A} \\
&- (-1)^n \obs{n}{B} \sum_{\substack{u+w=n\\u,w \geqslant 0}} (\phi_1^{\otimes u} \otimes \sigma_1 \otimes \psi_1^{\otimes w})\,.
\end{aligned}
\end{equation*}
\end{lemma}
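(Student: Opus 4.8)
The plan is to recast everything on the bar constructions and to recognise that this lemma is, for homotopies, the exact counterpart of \cref{obs_mor_del}: where \cref{obs_mor_del} computes $\partial(\obs{n}{\phi})$, here I would compute $\partial(\obs{n}{\sigma})$ and solve for $\obs{n}{\phi}-\obs{n}{\psi}$. I will therefore follow the same three-step pattern as \cref{obs_alg_del,obs_mor_del,obs_comp}: set up the coderivations, record the vanishing relations forced by the $\an{n}$-hypotheses, and then read off a single commutator identity restricted to $(\susp A)^{\otimes(n+1)} \to \susp B$.

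First I would introduce the usual data: $D^A = \cder(m^A_1,\ldots,m^A_n)$, $D^A_1 = \cder(m^A_1)$, $\bar{D}^A = D^A - D^A_1$, and likewise $D^B, D^B_1, \bar{D}^B$; the coalgebra morphisms $F = \cmor(f_1,\ldots,f_n)$ and $G = \cmor(g_1,\ldots,g_n)$ with $F_1, G_1, \bar{F}, \bar{G}$ as in \cref{obs_mor_del}; and the homotopy coderivation $S = \cder_{(F,G)}(s_1,\ldots,s_n)$ with $S_1 = \cder_{(F_1,G_1)}(s_1)$ and $\bar{S} = S - S_1$. Under the dictionary of \cref{sec:aialg}, restriction to $(\susp A)^{\otimes(n+1)} \to \susp B$ identifies $\obs{n}{\phi} - \obs{n}{\psi}$ with $(\bar{F}-\bar{G})\bar{D}^A - \bar{D}^B(\bar{F}-\bar{G})$ (this is \cref{obs_mor_del} applied to $\phi$ and to $\psi$), identifies $\obs{n}{\sigma}$ with the target-degree-one component of $X \colonequals \bar{S}\bar{D}^A + \bar{D}^B\bar{S}$, and identifies $\obs{n}{A}$ and $\obs{n}{B}$ with $\bar{D}^A\bar{D}^A$ and $\bar{D}^B\bar{D}^B$ as in \cref{obs_alg_del}; finally $\sum_{u+w=n}(\phi_1^{\otimes u}\otimes\sigma_1\otimes\psi_1^{\otimes w})$ is the length-preserving operator $S_1$.

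Next I would record the relations holding after restriction to $(\susp A)^{\otimes(n+1)} \to \susp B$, all by the argument of \cref{obs_alg_del}: the $\an{n}$-algebra axioms make $D^A D^A$ and $D^B D^B$ reduce to their $\bar{D}\bar{D}$-components on this restriction (the $D_1$-commutator parts would require the absent maps $m^A_{n+1}, m^B_{n+1}$), the morphism axioms for $\phi, \psi$ control the composites of $\bar{F}, \bar{G}$ with the differentials, and the $\an{n}$-homotopy relation $F - G = D^B S + S D^A$ is available on $(\susp A)^{\otimes k} \to \susp B$ for every $k \leqslant n$. Feeding these into the expansion of the $D_1$-commutator $D^B_1 X + X D^A_1$ — the same kind of straightforward but lengthy manipulation that is suppressed in \cref{obs_mor_del,obs_comp} — should yield
\begin{equation*}
D^B_1 X + X D^A_1 = \bigl[(\bar{F}-\bar{G})\bar{D}^A - \bar{D}^B(\bar{F}-\bar{G})\bigr] - \bigl[S_1\bar{D}^A\bar{D}^A - \bar{D}^B\bar{D}^B S_1\bigr]
\end{equation*}
restricted to $(\susp A)^{\otimes(n+1)} \to \susp B$.

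Finally I would replace the shifted maps $m,f,g,s$ by the $\ai$-structure maps $\mu,\phi,\psi,\sigma$. Since $\obs{n}{\sigma}$ has degree $n$, the left-hand commutator becomes $\partial(\obs{n}{\sigma}) = \partial^B\obs{n}{\sigma} - (-1)^n\obs{n}{\sigma}\partial^{A^{\otimes(n+1)}}$, while the right-hand side becomes $(\obs{n}{\phi}-\obs{n}{\psi}) - \sigma_1\obs{n}{A} + (-1)^n\obs{n}{B}\sum_{u+w=n}(\phi_1^{\otimes u}\otimes\sigma_1\otimes\psi_1^{\otimes w})$; rearranging gives the claim. I expect the main obstacle to be precisely this middle step: carrying out the expansion of $D^B_1 X + X D^A_1$, checking which summands survive the restriction to length $n+1$, invoking the $\an{n}$-homotopy relation in exactly the degrees where it is available, and — most delicately — tracking the Koszul and suspension signs through the $\shift^{\otimes k}$ so as to produce the global $(-1)^n$ in the last term and the correct sign in $\partial(\obs{n}{\sigma})$.
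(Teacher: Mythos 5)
Your outline reproduces the paper's own proof essentially verbatim: the same coderivation data $D^A, D^A_1, \bar{D}^A$, $F, F_1, \bar{F}$, $G, G_1, \bar{G}$, $S = \cder_{(F,G)}(s_1,\ldots,s_n)$, $S_1$, $\bar{S}$; the same identifications of $\obs{n}{\phi}$, $\obs{n}{\sigma}$, $\obs{n}{A}$, $\obs{n}{B}$ and the length-preserving operator $S_1$ with $\bar{F}\bar{D}^A - \bar{D}^B\bar{F}$, $\bar{D}^B\bar{S}+\bar{S}\bar{D}^A$, $\bar{D}^A\bar{D}^A$, $\bar{D}^B\bar{D}^B$ and $\sum_{u+w=n}(\phi_1^{\otimes u}\otimes\sigma_1\otimes\psi_1^{\otimes w})$; the same vanishing relations from the $\an{n}$-hypotheses; and the same final unshifting step. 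The one discrepancy is the sign on the left-hand side of your key identity: since $X = \bar{D}^B\bar{S}+\bar{S}\bar{D}^A$ has degree $0$ in the shifted world, the correct expression is the commutator $D^B_1 X - X D^A_1$ (as in the paper), not the anticommutator $D^B_1 X + X D^A_1$ that you wrote. You likely pattern-matched from \cref{obs_mor_del}, where the anticommutator is correct because there the defect $\bar{F}\bar{D}^A - \bar{D}^B\bar{F}$ has odd degree $-1$; here the parity is even, so the sign flips. Your own last step betrays the slip: you translate the left-hand side into $\partial(\obs{n}{\sigma}) = \partial^B\obs{n}{\sigma} - (-1)^n\obs{n}{\sigma}\partial^{A^{\otimes(n+1)}}$, which is exactly what the commutator unshifts to, whereas the anticommutator you displayed would unshift to $\partial^B\obs{n}{\sigma} + (-1)^n\obs{n}{\sigma}\partial^{A^{\otimes(n+1)}}$, which is not the Hom-complex differential of a degree-$n$ map and would not yield the stated lemma. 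With that single sign corrected, your proposal coincides with the paper's proof.
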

\begin{proof}
We use the same notation and arguments as in \cref{obs_alg_del,obs_mor_del,obs_comp}. We set
\begin{equation*}
\begin{gathered}
S \colonequals \cder_{(F,G)}(s_1, \ldots, s_n) \,,\quad S_1 \colonequals \cder_{(F_1,G_1)}(s_1) \quad \text{and} \quad \bar{S} \colonequals S - S_1\,.
\end{gathered}
\end{equation*}
For these the following identities hold
\begin{equation*}
\begin{gathered}
(F - G) \bar{D}^A = D^B S \bar{D}^A + S D^A \bar{D}^A \,, \quad \bar{D}^B (F - G) = \bar{D}^B D^B S + \bar{D}^B S D^A \\
\text{and} \quad D^B D^B \bar{S} = 0 = \bar{S} D^A D^A
\end{gathered}
\end{equation*}
when restricted to $(\susp A)^{\otimes (n+1)} \to \susp B$. By straightforward computations we have
\begin{equation*}
\begin{aligned}
(\bar{F} \bar{D}^A - \bar{D}^B & \bar{F}) - (\bar{G} \bar{D}^A - \bar{D}^B \bar{G}) \\
&= D^B_1 (\bar{D}^B \bar{S} + \bar{S} \bar{D}^A) - (\bar{D}^B \bar{S} + \bar{S} \bar{D}^A) D^A_1 + S_1 \bar{D}^A \bar{D}^A - \bar{D}^B \bar{D}^B S_1
\end{aligned}
\end{equation*}
when restricted to $(\susp A)^{\otimes (n+1)} \to \susp B$. Replacing the shifted maps by the maps of the $\ai$-structures, we obtain the desired identity.
\end{proof}

\section{Transfer of \texorpdfstring{$A_\infty$}{A-infinity}-structures} \label{sec:transfer}

We now give the transfer and lifting results for non-unital $\ai$-algebras. The arguments are inspired by \cite{Kadeishvili:1980,Burke:2018}, who proved similar results over a field or for surjective quasi-isomorphisms, respectively. To work with non-surjective quasi-isomorphisms we first need the suitable lifting results for chain complexes.

\begin{discussion} \label{lifting_cx_surj}
We consider a commutative diagram of chain maps
\begin{equation*}
\begin{tikzcd}
U \ar[r,"f"] \ar[d,"g" swap] \& X \ar[d,"{\sim}" swap,"g'"] \\
V \ar[r,"f'"] \& Y
\end{tikzcd}
\end{equation*}
where $g$ is degreewise injective with $\coker(g)$ a bounded below complex of projectives, and $g'$ a quasi-isomorphism that is degreewise surjective. Then there exists a lift $h \colon V \to X$ such that each triangle commutes; see for example \cite[Section~7]{Dwyer/Spalinski:1995}.
\end{discussion}

If one wants to remove the condition that $g'$ is degreewise surjective, this comes at the price that the lower right triangle need not commute honestly, but only up to homotopy. This result is well-known, through it seems to not appear in the literature in the setting of chain complexes. We give a proof for completeness.

\begin{lemma} \label{lifting_cx}
We consider a commutative diagram of chain maps
\begin{equation*}
\begin{tikzcd}
U \ar[r,"f"] \ar[d,"g" swap] \& X \ar[d,"{\sim}" swap,"g'"] \\
V \ar[r,"f'"] \& Y
\end{tikzcd}
\end{equation*}
where $g$ is degreewise injective with $\coker(g)$ a bounded below complex of projectives, and $g'$ a quasi-isomorphism. Then there exists a chain map $h \colon V \to X$ such that $hg = f$ and $f' \simeq g'h$. Moreover, if $s$ is the homotopy witnessing $f' \simeq g'h$, then $sg = 0$. 
\end{lemma}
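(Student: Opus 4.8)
The plan is to reduce the statement to a computation inside the Hom-complexes $\hom{V}{X}$ and $\hom{V}{Y}$, rather than to factor $g'$ through its mapping cylinder. (The textbook move would be to factor $g'$ as a cofibration followed by a surjective quasi-isomorphism and then invoke the surjective case \cref{lifting_cx_surj}; but the mapping cylinder of $g'$ only retracts onto $Y$, so projecting a lift back into $X$ does not yield an honest chain map, and the extra condition $sg=0$ is awkward to arrange.) Since $\coker(g)$ is a bounded below complex of projectives, each sequence $0 \to U_n \to V_n \to \coker(g)_n \to 0$ splits, so $g$ is degreewise split injective and, writing $C \colonequals \coker(g)$, for any complex $N$ one gets a degreewise split short exact sequence of Hom-complexes
\[ 0 \to \hom{C}{N} \to \hom{V}{N} \xrightarrow{g^*} \hom{U}{N} \to 0, \]
where $g^*$ is restriction along $g$ and the kernel is identified with $\hom{C}{N}$ via the projection $V \to C$.

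First I would pick any $h_0 \in \hom{V}{X}$ of degree $0$ with $g^* h_0 = f$ (possible since $g^*$ is surjective; note $h_0$ need not be a chain map), and record its two defects $\delta \colonequals \partial^{\hom{V}{X}}(h_0)$ and $\epsilon \colonequals g' h_0 - f'$. Using that $f,f',g,g'$ are chain maps and that the given square commutes, a short computation yields $g^*\delta = 0$ and $g^*\epsilon = g'f - f'g = 0$, so both defects land in the kernels above: $\delta \in \hom{C}{X}$ has degree $-1$ and $\epsilon \in \hom{C}{Y}$ has degree $0$. One checks $\delta$ is a cycle and $\partial^{\hom{C}{Y}}(\epsilon) = g'_*\delta$, where $g'_*$ denotes post-composition with $g'$. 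The goal then becomes to find $k \in \hom{C}{X}$ of degree $0$ and $s \in \hom{C}{Y}$ of degree $1$ with $\partial k = -\delta$ and $\partial s = \epsilon + g'_* k$.

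The key tool is that $C$ is a bounded below complex of projectives, hence K-projective, so $g'_* = \hom{C}{g'} \colon \hom{C}{X} \to \hom{C}{Y}$ is a quasi-isomorphism. For the first equation, $g'_*\delta = \partial\epsilon$ is a boundary, so $g'_*[\delta] = [g'_*\delta] = 0$ in $H_{-1}(\hom{C}{Y})$; injectivity of $g'_*$ on homology forces $[\delta]=0$, producing some $k_1$ with $\partial k_1 = -\delta$. Then $\epsilon + g'_* k_1$ is a degree-$0$ cycle, and I would use surjectivity of $g'_*$ on $H_0$ to choose a cycle $z \in \hom{C}{X}$ with $g'_*[z] = -[\epsilon + g'_* k_1]$; setting $k \colonequals k_1 + z$ keeps $\partial k = -\delta$ while making $\epsilon + g'_* k$ a boundary, so an $s$ with $\partial s = \epsilon + g'_* k$ exists.

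Finally I would set $h \colonequals h_0 + k$, regarding $k$ and $s$ as maps on $V$ through $V \to C$. Then $h$ is a chain map since $\partial h = \delta - \delta = 0$; one has $hg = f$ because $kg = 0$ and $h_0 g = f$; and $g'h - f' = \epsilon + g'_* k = \partial s$, exhibiting $f' \simeq g'h$ with homotopy $s$. As $k$ and $s$ were constructed inside $\hom{C}{X}$ and $\hom{C}{Y}$, they annihilate $U$, which gives $sg = 0$. The main obstacle is precisely the interplay in the third paragraph: the naive lift $h_0$ fails both to be a chain map and to make the lower triangle commute, and the two obstruction classes sit in different homological degrees, so reconciling them simultaneously requires using \emph{both} the injectivity of $g'_*$ (to kill $[\delta]$) and its surjectivity (to adjust $k$ so that the degree-$0$ class vanishes) — which is available only because $C$ is K-projective.
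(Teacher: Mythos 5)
Your proof is correct, and it takes a genuinely different route from the paper's. The paper factors $g'$ through its mapping cocylinder $\operatorname{cocyl}(g') \colonequals \susp^{-1}\cone(Y \to \cone(g'))$: the inclusion $X \to \operatorname{cocyl}(g')$ is a homotopy equivalence, the projection $\operatorname{cocyl}(g') \to Y$ is a degreewise \emph{surjective} quasi-isomorphism, and applying the surjective case \cref{lifting_cx_surj} to the resulting bottom square produces a lift $(h_1,h_2,h_3)^T \colon V \to \operatorname{cocyl}(g')$ whose components automatically encode the chain map $h = h_2$, the homotopy $s = h_1$, and the conditions $h_2 g = f$, $h_1 g = 0$. (Your parenthetical objection is aimed at the mapping \emph{cylinder}; the paper's path-object factorization does work, and $sg=0$ comes for free from the commuting upper triangle.) Your argument instead never invokes \cref{lifting_cx_surj}: you split off $C = \coker(g)$, locate the two defects $\delta \in \hom{C}{X}$ and $\epsilon \in \hom{C}{Y}$ of an arbitrary degreewise lift $h_0$, verify $\partial(\epsilon) = g'_\ast\delta$, and then kill both classes using that $\hom{C}{g'}$ is a quasi-isomorphism, which holds because a bounded below complex of projectives is K-projective. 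What your route buys: the obstruction-theoretic content is explicit, $sg=0$ is visibly automatic since $k$ and $s$ factor through $C$, and the only external input is K-projectivity of $C$ rather than the model-categorical lifting result cited for \cref{lifting_cx_surj}; the interplay you highlight (injectivity of $H(g'_\ast)$ to trivialize $[\delta]$, then surjectivity to correct $k_1$ by a cycle $z$ so that $\epsilon + g'_\ast k$ bounds) is exactly right and is the same mechanism the paper exploits later in \cref{null_homotopic_qi}, but performed by hand. What the paper's route buys: given \cref{lifting_cx_surj}, the proof is pure bookkeeping in one diagram, with no homology computation. One cosmetic point: with the paper's convention that $f' \simeq g'h$ via $s$ means $f' - g'h = \partial(s)$, your homotopy should be $-s$; it of course still satisfies $(-s)g = 0$.
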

\begin{proof}
We denote by $\operatorname{cocyl}(g') \colonequals \cone(\susp^{-1} Y \to \susp^{-1} \cone(g'))$ the mapping cocylinder of $g'$. This is the mapping cylinder in the opposite category; for the mapping cylinder see for example \cite[1.5.5]{Weibel:1994}. The underlying graded module of the mapping cocylinder is $\susp^{-1} Y \oplus X \oplus Y$. This yields a commutative diagram
\begin{equation*}
\begin{tikzcd}
U \ar[d,"="] \ar[r,"f"] \& X \ar[d,"{\left(\begin{smallmatrix} 0 \\ \id_X \\ g' \end{smallmatrix}\right)}","\sim" swap] \\
U \ar[r] \ar[d,"g" swap] \& \operatorname{cocyl}(g') \ar[d,"{\sim}" swap,"{\left(\begin{smallmatrix} 0 & 0 & \id_Y \end{smallmatrix}\right)}"] \\
V \ar[r,"f'"] \& Y \nospacepunct{.}
\end{tikzcd}
\end{equation*}
It is straightforward to check that the vertical maps on the right are homotopy equivalences and thus quasi-isomorphisms. The bottom square satisfies the assumptions of \cref{lifting_cx_surj}, and hence there exists a chain map $(h_1, h_2, h_3)^T \colon V \to \operatorname{cocyl}(g')$ such that each triangle commutes. It is straightforward to check that $h_2$ is a chain map and $f = h_2 g$ and $f'-g' h_2 = \partial(h_1)$ and $h_1 g = 0$. Hence $h = h_2$ and $s = h_1$ satisfy the desired properties.
\end{proof}

\begin{lemma} \label{null_homotopic_qi}
Let $g \colon X \to Y$ be a quasi-isomorphism and $f \colon F \to X$ a chain map where $F$ a bounded below complex of projectives. If $gf$ is null-homotopic with homotopy $s$, then there exists a map $h \colon \susp F \to X$ such that $f = \partial(h)$ and $s \simeq gh$. 
\end{lemma}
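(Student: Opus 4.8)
The plan is to argue entirely inside the Hom-complexes $\hom{F}{X}$ and $\hom{F}{Y}$, using their homology. Write $g_* \colonequals \hom{F}{g}$ for post-composition with $g$, a chain map of degree $0$. First I would record the two pieces of input data in these terms: since $f$ is a chain map it is a cycle of degree $0$ in $\hom{F}{X}$, and the hypothesis that $gf = g_*(f)$ is null-homotopic with homotopy $s$ says precisely that $g_*(f) = \partial(s)$ is a boundary in $\hom{F}{Y}$, with $s \in \hom[1]{F}{Y}$.

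The one place the hypotheses on $F$ and $g$ enter — and what I expect to be the crux — is the claim that $g_*$ is itself a quasi-isomorphism. Because $F$ is a bounded below complex of projectives it is homotopically projective, so $\hom{F}{-}$ preserves acyclicity. Applying this to the acyclic complex $\cone(g)$ and using the natural isomorphism $\hom{F}{\cone(g)} \cong \cone(g_*)$ shows $\cone(g_*)$ is acyclic, hence $g_*$ is a quasi-isomorphism. Everything afterward is a diagram chase in homology.

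Next comes the injectivity step. Since $H(g_*)[f] = [g_*(f)] = 0$ and $H(g_*)$ is injective, we get $[f] = 0$, so there exists $\hat h \in \hom[1]{F}{X}$ with $\partial(\hat h) = f$. This $\hat h$ already yields the relation $f = \partial(\hat h)$, but it need not satisfy $s \simeq g\hat h$: one checks $\partial(g_*\hat h) = g_*\partial(\hat h) = g_*(f) = \partial(s)$, so $g_*\hat h - s$ is a cycle of degree $1$, yet its class in $H_1\hom{F}{Y}$ may be nonzero.

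To repair the second condition without spoiling the first I would invoke surjectivity of $H(g_*)$ in degree $1$: choose a cycle $z \in \hom[1]{F}{X}$ with $[g_* z] = [g_*\hat h - s]$ and replace $\hat h$ by $\hat h - z$. As $z$ is a cycle we still have $\partial(\hat h - z) = f$, while now $[g_*(\hat h - z) - s] = 0$, i.e. $s \simeq g(\hat h - z)$. Finally I would reinterpret the degree-$1$ map $\hat h - z \colon F \to X$ as a map $h \colon \susp F \to X$ via $h\shift = \hat h - z$, giving $f = \partial(h)$ and $s \simeq gh$. The genuinely nontrivial point is this interplay of the two homological conditions: injectivity on $H_0$ produces the null-homotopy, and surjectivity on $H_1$ corrects it so that both requirements hold simultaneously. (The homotopy witnessing $s \simeq gh$, being a boundary in $\hom{F}{Y}$, is built from $F$ alone, so no extra care is needed about its interaction with other data.)
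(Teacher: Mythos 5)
Your proof is correct, and it takes a genuinely different route from the paper's. The paper deduces this lemma from \cref{lifting_cx}: the inclusion $F \to \cone(\id_F)$ is degreewise split injective with cokernel $\susp F$ a bounded below complex of projectives, the pair $(gf,s)$ assembles into a chain map $\cone(\id_F) \to Y$, and a lift $(h_1,h_2) \colon \cone(\id_F) \to X$ whose upper triangle commutes strictly and whose lower triangle commutes up to homotopy yields $h$ with $f = \partial(h)$ and $s \simeq gh$ in one stroke; the cone packages both requirements into a single lifting problem and reuses the cocylinder machinery already established for the transfer arguments. You instead argue inside the Hom-complexes: the key input is that $g_* = \hom{F}{g}$ is a quasi-isomorphism because $F$ is homotopically projective, via $\cone(g_*) \cong \hom{F}{\cone(g)}$; then injectivity of $H_0(g_*)$ produces $\hat h$ with $\partial(\hat h) = f$, and surjectivity of $H_1(g_*)$ lets you correct $\hat h$ by a cycle $z$, which disturbs neither the equation $\partial(\hat h) = f$ nor the degree, so that $h = \hat h - z$ satisfies both conditions. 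The details check out: $g_*\hat h - s$ is indeed a $1$-cycle since $\partial(g_*\hat h) = g_* f = \partial(s)$, and the standard fact you invoke (for $F$ a bounded below complex of projectives, $\hom{F}{-}$ preserves acyclicity) is external input of the same caliber as the surjective lifting result the paper cites for \cref{lifting_cx_surj}. What your route buys is conceptual clarity: it exhibits the lemma as precisely the conjunction of injectivity of $H_0(g_*)$ and surjectivity of $H_1(g_*)$, with no auxiliary cone or cocylinder constructions. What the paper's route buys is economy within its own framework: both conditions are met simultaneously by one application of an already-proved lemma---the same lemma that drives the inductive steps elsewhere---rather than by a two-step produce-then-correct argument.
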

\begin{proof}
We have the following commutative diagram
\begin{equation*}
\begin{tikzcd}
F \ar[r,"f"] \ar[d] \& X \ar[d,"\sim" swap,"g"] \\
\cone(\id_F) \ar[r,"{(gf,s)}"] \& Y \nospacepunct{.}
\end{tikzcd}
\end{equation*}
By \cref{lifting_cx}, there exists a chain map $(h_1,h_2) \colon \cone(\id_F) \to X$ such that the upper triangle commutes and the lower triangle commutes up to homotopy. It is straightforward to check that $h = h_2$ satisfies the desired conditions.
\end{proof}

\begin{theorem} \label{transfer_ai_alg}
Let $B$ be an $\ai$-algebra and let $\epsilon_1 \colon A \to B$ be a quasi-isomorphism of complexes with $(A,\mu^A_1)$ a bounded complex of projectives. Then there exists an $\ai$-algebra structure on $A$ and a quasi-isomorphism of $\ai$-algebras $\epsilon \colon A \to B$ extending $\epsilon_1$. If $\epsilon_1$ is degreewise surjective, then the morphism of $\ai$-algebras $\epsilon$ can be chosen to be strict.
\end{theorem}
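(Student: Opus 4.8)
The plan is to build the structure maps $\mu^A_{n+1}$ and the morphism components $\epsilon_{n+1}$ simultaneously by induction on $n$, feeding the obstruction calculus of \cref{sec:obstruction} into the lifting results \cref{null_homotopic_qi,lifting_cx_surj}. The base case $n=1$ is given: $\mu^A_1 = \partial^A$ makes $A$ an $\an{1}$-algebra and the chain map $\epsilon_1$ is a morphism of $\an{1}$-algebras. For the inductive step I assume $A$ already carries an $\an{n}$-algebra structure and that $\epsilon=(\epsilon_1,\dots,\epsilon_n)$ is a morphism of $\an{n}$-algebras, and I seek $\mu^A_{n+1}$ and $\epsilon_{n+1}$ extending these.

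First I would produce $\mu^A_{n+1}$. By \cref{obs_alg_del} the map $\obs{n}{A}$ is a cycle, and by \cref{obs_alg_stasheff} extending the structure to an $\an{n+1}$-algebra is exactly the problem of exhibiting $\obs{n}{A}$ as a boundary in $\hom{A^{\otimes n+1}}{A}$. The natural way to do this is to push it through the quasi-isomorphism $\epsilon_1$. Since $B$ is a genuine $\ai$-algebra, in particular an $\an{n+1}$-algebra, \cref{obs_alg_stasheff} applied to $B$ gives $\obs{n}{B} = -\partial(\mu^B_{n+1})$; combining this with \cref{obs_mor_del} and the fact that $\epsilon_1$ is a chain map yields
\[
\epsilon_1 \obs{n}{A} = \partial(\obs{n}{\epsilon}) + \obs{n}{B}\,\epsilon_1^{\otimes n+1} = \partial\bigl(\obs{n}{\epsilon} - \mu^B_{n+1}\epsilon_1^{\otimes n+1}\bigr).
\]
Thus $\epsilon_1 \obs{n}{A}$ is null-homotopic with homotopy $s \colonequals \obs{n}{\epsilon} - \mu^B_{n+1}\epsilon_1^{\otimes n+1}$. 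Viewing $\obs{n}{A}$ as a chain map out of the bounded below complex of projectives $A^{\otimes n+1}$ (after the appropriate degree shift) and applying \cref{null_homotopic_qi} with $g=\epsilon_1$, I obtain a map $h$ with $\obs{n}{A}=\partial(h)$ and $s\simeq \epsilon_1 h$. Setting $\mu^A_{n+1}\colonequals -h$ then satisfies \cref{obs_alg_stasheff}, so $A$ becomes an $\an{n+1}$-algebra.

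The point of the construction is that the same datum $h$ also delivers $\epsilon_{n+1}$. By \cref{obs_alg_mor_stasheff} I must solve
\[
\partial(\epsilon_{n+1}) = \obs{n}{\epsilon} - \mu^B_{n+1}\epsilon_1^{\otimes n+1} + \epsilon_1\mu^A_{n+1} = s - \epsilon_1 h,
\]
and the relation $s\simeq \epsilon_1 h$ produced by \cref{null_homotopic_qi} says precisely that $s-\epsilon_1 h$ is a boundary; I take $\epsilon_{n+1}$ to be the witnessing homotopy. This finishes the inductive step, and letting $n\to\infty$ gives the $\ai$-algebra structure on $A$ together with the morphism $\epsilon$, which is a quasi-isomorphism of $\ai$-algebras because its linear part $\epsilon_1$ is.

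For the last assertion I would instead keep $\epsilon$ strict, i.e. $\epsilon_k=0$ for $k\geq 2$, throughout the induction. A direct check shows $\obs{n}{\epsilon}=0$ for a strict morphism, so \cref{obs_mor_del} degenerates to $\epsilon_1\obs{n}{A} = -\partial(\mu^B_{n+1}\epsilon_1^{\otimes n+1})$, and the remaining task is to find $\mu^A_{n+1}$ with $\partial(\mu^A_{n+1})=-\obs{n}{A}$ and $\epsilon_1\mu^A_{n+1}=\mu^B_{n+1}\epsilon_1^{\otimes n+1}$ on the nose, which is the strict form of \cref{obs_alg_mor_stasheff} with $\epsilon_{n+1}=0$. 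Since $\epsilon_1$ is now a degreewise surjective quasi-isomorphism, this is an honest lifting problem, which I would solve using \cref{lifting_cx_surj} in place of \cref{null_homotopic_qi}: the honest commutativity of the triangles forces the homotopy to vanish, so $\epsilon$ stays strict. I expect the main obstacle throughout to be sign bookkeeping, namely tracking the Koszul signs so that the output of the lifting lemmas matches the normalization in \cref{obs_alg_stasheff,obs_alg_mor_stasheff}, and verifying that the single map $h$ (respectively the single lift) simultaneously resolves both the algebra obstruction and the morphism obstruction.
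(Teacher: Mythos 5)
Your proposal is correct and follows essentially the same route as the paper: the same induction, the same identity $\epsilon_1 \obs{n}{A} = \partial\bigl(\obs{n}{\epsilon} - \mu^B_{n+1}\epsilon_1^{\otimes (n+1)}\bigr)$ obtained from \cref{obs_alg_del,obs_mor_del} and the Stasheff identity for $B$, and the same single application of \cref{null_homotopic_qi} producing $\mu^A_{n+1}$ and $\epsilon_{n+1}$ simultaneously, with matching signs. Your treatment of the degreewise surjective case (observing $\obs{n}{\epsilon}=0$ for strict $\epsilon$ and replacing \cref{null_homotopic_qi} by \cref{lifting_cx_surj}) is also correct, and in fact spells out a detail the paper's proof leaves implicit.
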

\begin{proof}
We inductively construct maps $\mu^A_n$ and $\epsilon_n$ satisfying the $n$th Stasheff identities \cref{alg_stasheff,alg_mor_stasheff}. 

For $n = 1$ we set $\mu^A_1 \colonequals \partial^A$. It is clear, that the first Stasheff identities are satisfied for $\mu^A_1$ and $\epsilon_1$.

Let $n \geq 2$ and assume $A$ is an $\an{n}$-algebra and $\epsilon$ a morphism of $\an{n}$-algebras. By \cref{obs_alg_del}, the obstruction map $\obs{n}{A}$ is a chain map. By \cref{obs_mor_del} we have
\begin{equation*}
\epsilon_1 \obs{n}{A} = \partial(\obs{n}{\epsilon}) + \obs{n}{B} \epsilon_1^{\otimes (n+1)} = \partial(\obs{n}{\epsilon} - \mu^B_{n+1} \epsilon_1^{\otimes (n+1)})\,;
\end{equation*}
the latter equality uses the $(n+1)$st Stasheff identity \cref{obs_alg_stasheff} for $B$ and that $\epsilon_1$ is a chain map. Using \cref{null_homotopic_qi} we obtain a map $-\mu^A_{n+1} \colon A^{\otimes (n+1)} \to A$ of degree $n-1$ and a map $\epsilon_{n+1} \colon A^{\otimes (n+1)} \to B$ of degree $n$ such that 
\begin{equation*}
\obs{n}{A} = - \partial(\mu^A_{n+1}) \quad \text{and} \quad \obs{n}{\epsilon} - \mu^B_{n+1} \epsilon_1^{\otimes (n+1)} + \epsilon_1 \mu^A_{n+1} = \partial(\epsilon_{n+1})\,.
\end{equation*}
These are precisely \cref{obs_alg_stasheff,obs_alg_mor_stasheff}, and hence $A$ is an $\an{n+1}$-algebra and $\epsilon$ is a morphism of $\an{n+1}$-algebras. 
\end{proof}

\begin{theorem} \label{lift_ai_mor}
Let $\epsilon \colon B \to C$ be a quasi-isomorphism of $\ai$-algebras and $\psi \colon A \to C$ a morphism of $\ai$-algebras with $(A,\mu^A_1)$ a bounded complex of projectives. Then there exists a morphism of $\ai$-algebras $\phi \colon A \to B$ such that $\epsilon \phi \simeq \psi$ as morphisms of $\ai$-algebras. If $\epsilon$ is degreewise surjective, then $\phi$ can be chosen such that $\epsilon \phi = \psi$. 
\end{theorem}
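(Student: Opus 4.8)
The plan is to imitate the inductive construction in the proof of \cref{transfer_ai_alg}, but now building the components $\phi_n$ of the morphism together with the components $\sigma_n$ of an $\ai$-homotopy $\sigma$ witnessing $\epsilon\phi \simeq \psi$. Write $\chi \colonequals \epsilon\phi \colon A \to C$. At stage $n$ I will have a morphism of $\an{n}$-algebras $\phi \colon A \to B$ and an $\an{n}$-homotopy $\sigma$ between $\chi$ and $\psi$, and I extend both by one degree; since $\chi$ is a composite of $\an{n}$-morphisms it is itself one, so \cref{obs_comp,obs_htpy_del} apply to it. For the base case $n=1$ I apply \cref{lifting_cx} to the square whose top row is $0 \to B$, whose bottom row is $\psi_1 \colon A \to C$, whose left map is $0 \to A$ and whose right map is $\epsilon_1$; as $A$ is a bounded complex of projectives this yields a chain map $\phi_1 \colon A \to B$ together with a homotopy $\sigma_1$ realizing $\psi_1 \simeq \epsilon_1\phi_1$. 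When $\epsilon_1$ is degreewise surjective I instead invoke \cref{lifting_cx_surj} to obtain $\epsilon_1\phi_1 = \psi_1$ and set $\sigma_1 = 0$.

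For the inductive step set
\begin{equation*}
\Theta_n \colonequals \obs{n}{\phi} - \mu^B_{n+1}\phi_1^{\otimes(n+1)} + \phi_1\mu^A_{n+1} \colon A^{\otimes(n+1)} \to B\,,
\end{equation*}
so that by \cref{obs_alg_mor_stasheff} a map $\phi_{n+1}$ extends $\phi$ to an $\an{n+1}$-morphism precisely when $\Theta_n = \partial(\phi_{n+1})$. Using \cref{obs_mor_del} together with the $(n+1)$st Stasheff identities $\obs{n}{A} = -\partial(\mu^A_{n+1})$ and $\obs{n}{B} = -\partial(\mu^B_{n+1})$ for the genuine algebras $A$ and $B$, and the fact that $\phi_1$ is a chain map, one checks $\partial(\Theta_n)=0$. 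The crux is to produce a null-homotopy of $\epsilon_1\Theta_n$ out of the data already at hand, namely
\begin{equation*}
\begin{aligned}
s \colonequals {}&\psi_{n+1} + \obs{n}{\sigma} + \sigma_1\mu^A_{n+1} + (-1)^n\mu^C_{n+1}\sum_{\substack{u+w=n\\u,w\geqslant 0}}(\chi_1^{\otimes u}\otimes\sigma_1\otimes\psi_1^{\otimes w}) \\
&- \sum_{p=2}^{n+1}\sum_{\substack{\bmalpha\in\BN^p\\|\bmalpha|=n+1}}(-1)^{t_1(\bmalpha)}\epsilon_p\bmphi^{\otimes\bmalpha}\,,
\end{aligned}
\end{equation*}
the last sum being exactly the part of $\chi_{n+1}$ that does not involve $\phi_{n+1}$. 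Establishing $\epsilon_1\Theta_n = \partial(s)$ is the heart of the proof: one expands $\obs{n}{\chi}$ by \cref{obs_comp}, substitutes \cref{obs_htpy_del} for $\partial(\obs{n}{\sigma})$, and then cancels the remaining obstruction terms using the identities $\obs{n}{C} = -\partial(\mu^C_{n+1})$ for $C$ and the genuine Stasheff identities for $\psi$ and for $\epsilon$, which rewrite $\obs{n}{\psi}$ and $\obs{n}{\epsilon}$ as boundaries plus their extremal terms.

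Granting this, \cref{null_homotopic_qi} applied to $\epsilon_1 \colon B \to C$ and the chain map $\Theta_n \colon A^{\otimes(n+1)} \to B$, whose source is a bounded complex of projectives, yields $\phi_{n+1}$ with $\Theta_n = \partial(\phi_{n+1})$, making $\phi$ an $\an{n+1}$-morphism, and a homotopy $\sigma_{n+1}$ realizing $s \simeq \epsilon_1\phi_{n+1}$. Unravelling $s - \epsilon_1\phi_{n+1} = \pm\partial(\sigma_{n+1})$ and writing $\chi_{n+1} = \epsilon_1\phi_{n+1} + \sum_{p\geqslant 2}(-1)^{t_1(\bmalpha)}\epsilon_p\bmphi^{\otimes\bmalpha}$ recovers precisely \cref{obs_alg_htpy_stasheff}, so $\sigma$ becomes an $\an{n+1}$-homotopy between $\chi$ and $\psi$. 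In the degreewise surjective case I instead feed $s$ (with $\sigma\equiv 0$, so that only the $\psi_{n+1}$ and $\epsilon_p$ terms survive) into \cref{lifting_cx_surj} as the prescribed value of $\epsilon_1\phi_{n+1}$, forcing $\chi_{n+1}=\psi_{n+1}$ and hence $\epsilon\phi=\psi$ on the nose. The main obstacle throughout is the sign and index bookkeeping in the identity $\epsilon_1\Theta_n = \partial(s)$; recasting all of these obstructions through coderivations on the tensor coalgebra, as set up in \cref{sec:obstruction}, is what keeps the computation tractable.
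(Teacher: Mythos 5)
Your proposal is correct and takes essentially the same approach as the paper: the same simultaneous induction on $(\phi_n,\sigma_n)$, the same base case via \cref{lifting_cx}, the identical null-homotopy $s$ produced from \cref{obs_comp,obs_htpy_del} together with the Stasheff identities for $A$, $C$, $\psi$ and $\epsilon$, and the same concluding application of \cref{null_homotopic_qi}. The only difference is cosmetic: you spell out that the lifted obstruction $\obs{n}{\phi} - \mu^B_{n+1}\phi_1^{\otimes(n+1)} + \phi_1\mu^A_{n+1}$ is a cycle, which the paper leaves implicit.
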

\begin{proof}
We inductively construct maps $\phi_n$ and $\sigma_n$ such that $\phi \colon A \to B$ is a morphism of $\an{n}$-algebras and $\sigma$ is an $\an{n}$-homotopy witnessing $\epsilon \phi \simeq \psi$. 

For $n=1$ we get $\phi_1$ and $\sigma_1$ satisfying 
\begin{equation*}
\mu^B_1 \phi_1 = \phi_1 \mu^A_1 \quad \text{and} \quad \epsilon_1 \phi_1 - \psi_1 = \mu^C_1 \sigma_1 + \sigma_1 \mu^A_1
\end{equation*}
from \cref{lifting_cx}. These are the first Stasheff identities \cref{alg_mor_stasheff,alg_htpy_stasheff}.

Let $n \geq 2$ and assume $\phi$ is a morphism of $\an{n}$-algebras and $\sigma$ is an $\an{n}$-homotopy witnessing $\epsilon \phi \simeq \psi$. By \cref{obs_comp,obs_htpy_del}, we obtain
\begin{equation*}
\begin{aligned}
&\quad \epsilon_1(\obs{n}{\phi} - \mu^B_{n+1} \phi_1^{\otimes (n+1)} + \phi_1 \mu^A_{n+1}) \\
&= \partial(\obs{n}{\sigma}) + \obs{n}{\psi} + \sigma_1 \obs{n}{A} \\
&\,\, - (-1)^n \obs{n}{C} \sum_{\substack{u+w=n\\u,w \geqslant 0}} (\phi_1^{\otimes u} \otimes \sigma_1 \otimes \psi^{\otimes w}) - \obs{n}{\epsilon} \phi_1^{\otimes (n+1)} \\
&\,\,- \partial(\sum_{p=2}^n \sum_{\substack{\bmalpha \in \BN^p\\|\bmalpha|=n+1}} (-1)^{t_1(\bmalpha)} \epsilon_p \bmphi^{\otimes \bmalpha}) - \epsilon_1 \mu^B_{n+1} \phi_1^{\otimes (n+1)} + \epsilon_1 \phi_1 \mu^A_{n+1}\,.
\end{aligned}
\end{equation*}
Since $A$ and $C$ are $\ai$-algebras and $\epsilon$ and $\psi$ are morphisms of $\ai$-algebras, the Stasheff identities \cref{obs_alg_stasheff,obs_alg_mor_stasheff} yield
\begin{equation*}
\begin{aligned}
&\quad \epsilon_1(\obs{n}{\phi} - \mu^B_{n+1} \phi_1^{\otimes (n+1)} + \phi_1 \mu^A_{n+1}) \\
&= \partial(\obs{n}{\sigma} + \psi_{n+1} - \sum_{p=2}^{n+1} \sum_{\substack{\bmalpha \in \BN^p\\|\bmalpha|=n+1}} (-1)^{t_1(\bmalpha)} \epsilon_p \bmphi^{\otimes \bmalpha}) \\
&\quad + (\epsilon_1 \phi_1 - \psi_1) \mu^A_{n+1} - \sigma_1 \partial(\mu^A_{n+1}) \\
&\quad - \mu^C_{n+1} ((\epsilon_1 \phi_1)^{\otimes (n+1)} - \psi^{\otimes (n+1)}) + (-1)^n \partial(\mu^C_{n+1}) \sum_{\substack{u+w=n\\u,w \geqslant 0}} (\phi_1^{\otimes u} \otimes \sigma_1 \otimes \psi^{\otimes w})\,.
\end{aligned}
\end{equation*}
By the base case we have $\epsilon_1 \phi_1 - \psi_1 = \partial(\sigma_1)$, and thus
\begin{equation*}
(\epsilon_1 \phi_1 - \psi_1) \mu^A_{n+1} - \sigma_1 \partial(\mu^A_{n+1}) = \partial(\sigma_1) \mu^A_{n+1} - \sigma_1 \partial(\mu^A_{n+1}) = \partial(\sigma_1 \mu^A_{n+1})\,.
\end{equation*}
Similarly, we obtain for the terms involving $\mu^C_{n+1}$:
\begin{equation*}
\begin{aligned}
&\quad - \mu^C_{n+1} ((\epsilon_1 \phi_1)^{\otimes (n+1)} - \psi^{\otimes (n+1)}) + (-1)^n \partial(\mu^C_{n+1}) \sum_{\substack{u+w=n\\u,w \geqslant 0}} (\phi_1^{\otimes u} \otimes \sigma_1 \otimes \psi^{\otimes w}) \\
&= (-1)^n \partial(\mu^C_{n+1} \sum_{\substack{u+w=n\\u,w \geqslant 0}} ((\epsilon_1 \phi_1)^{\otimes u} \otimes \sigma_1 \otimes \psi_1^{\otimes w}))\,.
\end{aligned}
\end{equation*}
So we have shown that $\epsilon_1(\obs{n}{\phi} - \mu^B_{n+1} \phi_1^{\otimes (n+1)} + \phi_1 \mu^A_{n+1})$ is null-homotopic with homotopy
\begin{equation*}
\begin{aligned}
\obs{n}{\sigma} & + \psi_{n+1} + \sigma_1 \mu^A_{n+1} \\
& - \sum_{p=2}^{n+1} \sum_{\substack{\bmalpha \in \BN^p\\|\bmalpha|=n+1}} (-1)^{t_1(\bmalpha)} \epsilon_p \bmphi^{\otimes \bmalpha} + (-1)^n \mu^C_{n+1} \sum_{\substack{u+w=n\\u,w \geqslant 0}} ((\epsilon_1 \phi_1)^{\otimes u} \otimes \sigma_1 \otimes \psi_1^{\otimes w})\,.
\end{aligned}
\end{equation*}
By \cref{null_homotopic_qi}, there exists a map $\phi_{n+1} \colon A^{\otimes (n+1)} \to B$ of degree $n$ and a map $-\sigma_{n+1} \colon A^{\otimes (n+1)} \to C$ of degree $n+1$ such that
\begin{equation*}
\begin{gathered}
\obs{n}{\phi} - \mu^B_{n+1} \phi_1^{\otimes (n+1)} + \phi_1 \mu^A_{n+1} = \partial(\phi_{n+1}) \quad \text{and} \\
\begin{aligned}
\obs{n}{\sigma} + \psi_{n+1} + & \sigma_1 \mu^A_{n+1} - \sum_{p=1}^{n+1} \sum_{\substack{\bmalpha \in \BN^p\\|\bmalpha|=n+1}} (-1)^{t_1(\bmalpha)} \epsilon_p \bmphi^{\otimes \bmalpha} \\
&+ (-1)^n \mu^C_{n+1} \sum_{\substack{u+w=n\\u,w \geqslant 0}} ((\epsilon_1 \phi_1)^{\otimes u} \otimes \sigma_1 \otimes \psi_1^{\otimes w}) = -\partial(\sigma_{n+1})\,.
\end{aligned}
\end{gathered}
\end{equation*}
These identities are precisely \cref{obs_alg_mor_stasheff,obs_alg_htpy_stasheff}, and hence $\phi$ is a morphism of $\an{n+1}$-algebras and $\sigma$ an $\an{n+1}$-homotopy witnessing $\epsilon \phi \simeq \psi$. 
\end{proof}

\begin{corollary} \label{unique_transfer_ai_alg}
The transfer of the $\ai$-algebra structure from $B$ to $A$ in \cref{transfer_ai_alg} is unique up to homotopy equivalence.
\end{corollary}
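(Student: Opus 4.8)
The plan is to deduce everything from the lifting theorem \cref{lift_ai_mor}, which is the only substantial input. Write $A$ and $A'$ for the complex underlying $A$ equipped with two $\ai$-structures produced by \cref{transfer_ai_alg}, and let $\epsilon \colon A \to B$ and $\epsilon' \colon A' \to B$ be the accompanying quasi-isomorphisms of $\ai$-algebras, both extending the given $\epsilon_1$, so that $\epsilon_1 = \epsilon'_1$. Since $\epsilon_1$ is a quasi-isomorphism of complexes, both $\epsilon$ and $\epsilon'$ are quasi-isomorphisms of $\ai$-algebras. The goal is to produce a homotopy equivalence of $\ai$-algebras $A \to A'$.

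First I would apply \cref{lift_ai_mor} to the quasi-isomorphism $\epsilon' \colon A' \to B$ and the morphism $\epsilon \colon A \to B$, whose source is a bounded complex of projectives; this yields a morphism $\phi \colon A \to A'$ with $\epsilon'\phi \simeq \epsilon$. Restricting an $\ai$-homotopy to linear parts gives an ordinary chain homotopy (the $k=1$ instance of \cref{alg_htpy_stasheff} reads $\phi_1 - \psi_1 = \partial^B\sigma_1 + \sigma_1\partial^A$), so $\epsilon'_1\phi_1$ and $\epsilon_1 = \epsilon'_1$ induce the same map on homology. As $\epsilon'_1$ is an isomorphism on homology, so is $\phi_1$; that is, $\phi$ is a quasi-isomorphism of $\ai$-algebras.

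It then remains to see that any quasi-isomorphism $\phi \colon A \to A'$ of $\ai$-algebras between bounded complexes of projectives is a homotopy equivalence; this is the classical argument that a weak equivalence admitting homotopy right inverses is invertible in the homotopy category. Applying \cref{lift_ai_mor} to $\phi$ (a quasi-isomorphism) and $\id_{A'}$ produces $\psi \colon A' \to A$ with $\phi\psi \simeq \id_{A'}$; on linear parts $\phi_1\psi_1 \simeq \id$ together with $\phi_1$ being a quasi-isomorphism forces $\psi_1$ to be a quasi-isomorphism, so $\psi$ is again a quasi-isomorphism of $\ai$-algebras. Applying \cref{lift_ai_mor} once more to $\psi$ and $\id_A$ gives $\chi \colon A \to A'$ with $\psi\chi \simeq \id_A$. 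Using that $\ai$-homotopy is compatible with composition one computes
\begin{equation*}
\chi \simeq (\phi\psi)\chi = \phi(\psi\chi) \simeq \phi\,, \quad\text{and hence}\quad \psi\phi \simeq \psi\chi \simeq \id_A\,,
\end{equation*}
while $\phi\psi \simeq \id_{A'}$ holds by construction. Thus $\phi$ is a homotopy equivalence with inverse $\psi$, and the two transferred $\ai$-structures are homotopy equivalent.

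The routine inputs are that homotopic chain maps agree on homology, the two-out-of-three property of quasi-isomorphisms, and that $\ai$-homotopy is an equivalence relation compatible with composition (so that the displayed string of homotopies is legitimate). The last of these — congruence of $\simeq$ under composition on both sides — is the only point requiring care, and I expect it to be the main obstacle; I would either invoke it as a standard property of $\ai$-morphisms or verify it on the bar constructions, where $\simeq$ corresponds to chain homotopy of morphisms of dg coalgebras and composition is ordinary composition.
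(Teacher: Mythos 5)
Your proposal is correct and takes essentially the same route as the paper: both apply \cref{lift_ai_mor} once to produce a comparison morphism $\phi$ between the two transferred structures, observe that $\phi$ is a quasi-isomorphism, and then apply \cref{lift_ai_mor} again to manufacture one-sided homotopy inverses that are reconciled by the standard associativity-up-to-homotopy juggling. The only cosmetic difference is that the paper produces a right inverse $\psi$ and ``analogously'' a left inverse $\psi'$ and checks $\psi \simeq \psi'$, whereas you take a right inverse of the right inverse and deduce $\psi\phi \simeq \id$; both versions rest on the same routine fact --- compatibility of $\ai$-homotopy with composition --- which the paper dismisses as straightforward and you correctly flag as the one point needing verification.
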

\begin{proof}
Let $A$ and $B$ be as in \cref{transfer_ai_alg}, and let $(A,\hat{\mu}_k)$ and $(A,\tilde{\mu}_k)$ be $\ai$-algebra structures on $A$ with quasi-isomorphisms of $\ai$-algebras $\hat{\epsilon}_k,\tilde{\epsilon} \colon A \to B$. By \cref{lift_ai_mor}, there exists a morphism of $\ai$-algebras $\phi \colon (A,\hat{\mu}_k) \to (A,\tilde{\mu}_k)$ such that $\tilde{\epsilon} \phi \simeq \hat{\epsilon}$ up to an $\ai$-homotopy $\sigma$. In particular, the morphism of $\ai$-algebras $\phi$ is a quasi-isomorphism. Using \cref{lift_ai_mor} again there exists a lift $\psi \colon (A,\hat{\mu}) \to (A,\tilde{\mu})$ of $\id_A$ along $\phi$, with $\phi \psi \simeq \id_A$. Analogously we have $\psi' \colon (A,\hat{\mu}) \to (A,\tilde{\mu})$ with $\psi' \phi \simeq \id_A$. Now it is straightforward to check that $\psi \simeq \psi'$, and hence $\phi$ is a homotopy equivalence.
\end{proof}

\section{Transfer of strictly unital \texorpdfstring{$A_\infty$}{A-infinity}-structures} \label{sec:transfer_unital}

In many situations the algebra $B$ is unital. This naturally leads to the question whether this property is preserved by the transfer theorem. This is true under some mild conditions on the free resolution. Such results have been shown by \cite[Theorem~10]{Cheng/Getzler:2008} over a field, and over a commutative ring for surjective quasi-isomorphisms by \cite{Burke:2018,Briggs/Cameron/Letz/Pollitz:2025}. 

\begin{discussion}
An $\ai$-algebra structure on $A$ is \emph{strictly unital}, if there exists a map of degree zero $\eta^A \colon Q \to A$, such that
\begin{equation} \label{alg_unital}
\begin{gathered}
\mu^A_2 (\id_A \otimes \eta^A) = \id_A = \mu^A_2 (\eta^A \otimes \id_A) \quad \text{and} \\
\mu^A_k (\id_A^{\otimes u} \otimes \eta^A \otimes \id_A^{\otimes w}) = 0 \quad \text{for } k \neq 2 \text{ and } u+w = k-1\,.
\end{gathered}
\end{equation}
The element $1_A \colonequals \eta^A(1_Q)$ is called the \emph{strict unit} of $A$. An $\ai$-algebra is \emph{split unital}, if it is strictly unital and $\eta^A$ splits as a map of graded modules. Then we can write $A \cong Q \oplus \bar{A}$ as graded modules. Any split unital $\ai$-algebra structure on $Q \oplus \bar{A}$ is fully determined by the restrictions of $\mu_k$ to
\begin{equation*}
\bar{A}^{\otimes k} \to \bar{A} \quad \text{and} \quad \bar{A}^{\otimes k} \to Q\,.
\end{equation*}
\end{discussion}

\begin{discussion}
A morphism of strictly unital $\ai$-algebras is a morphism of $\ai$-algebras $\phi \colon A \to B$ such that
\begin{equation} \label{alg_mor_unital}
\begin{gathered}
\phi_1 \eta^A = \eta^B \quad \text{and} \\
\phi_k (\id_A^{\otimes u} \otimes \eta^A \otimes \id_A^w) = 0 \quad \text{for } k \neq 1 \text{ and } u+w = k-1\,.
\end{gathered}
\end{equation}
A homotopy of morphisms of strictly unital $\ai$-algebras is a homotopy $\sigma$ of morphisms of $\ai$-algebras such that
\begin{equation} \label{alg_htpy_unital}
\sigma_k (\id_A^{\otimes u} \otimes \eta^A \otimes \id_A^w) = 0 \quad \text{for } u+w = k-1\,.
\end{equation}
\end{discussion}

\begin{discussion}
We say an $\an{n}$-algebra is strictly unital if \cref{alg_unital} holds for $1 \leq k \leq n$, and we require the corresponding morphisms and homotopies to satisfy \cref{alg_mor_unital,alg_htpy_unital} for $1 \leq k \leq n$. 

The strictly unital property also has an implication for the obstructions. For a strictly unital $\ai$-algebra $A$ the unit map $\eta^A$ induces a chain map
\begin{equation*}
\eta^A_k \colon \sum_{\substack{u+w=k-1\\u,w \geqslant 0}} A^{\otimes u} \otimes Q \otimes A^{\otimes w} \to A^{\otimes k}
\end{equation*}
given by $\id_A^{\otimes u} \otimes \eta_A \otimes \id_A^{\otimes w}$. Using the map $\eta^A_k$, the strictly unital condition \cref{alg_unital} for $k \neq 2$ is equivalent to $\mu^A_k \eta^A_k = 0$. Similarly, \cref{alg_mor_unital} can be expressed as $\phi_1 \eta^A_1 = \eta^B$ for $k = 1$ and $\phi_k \eta^A_k = 0$ for $k \neq 1$, and \cref{alg_htpy_unital} can be expressed as $\sigma_k \eta^A_k = 0$. 

It is straightforward to check that 
\begin{equation*}
\obs{n}{A} \eta^A_{n+1} = 0 \,,\quad \obs{n}{\phi} \eta^A_{n+1} = 0 \quad \text{and} \quad \obs{n}{\sigma} \eta^A_{n+1} = 0
\end{equation*}
for any $\an{n}$-algebra $A$, morphism of $\an{n}$-algebras $\phi$ and $\an{n}$-homotopy $\sigma$.
\end{discussion}

\begin{proposition} \label{transfer_unital_ai_alg}
Let $B$ be a strictly unital $\ai$-algebra and let $\epsilon_1 \colon A \to B$ be a quasi-isomorphism of complexes with $A$ a bounded below complex of projectives. We further assume that $A = Q \oplus \bar{A}$ as graded modules such that $\partial(Q) = 0$ and $(Q \to A \to B) = \eta^B$. Then there exists a split unital $\ai$-algebra structure on $A$ and a quasi-isomorphism of strictly unital $\ai$-algebras $\epsilon \colon A \to B$ extending $\epsilon_1$. If $\epsilon$ is degreewise surjective, then the morphism of strictly unital $\ai$-algebras $\epsilon$ can be chosen to be strict.
\end{proposition}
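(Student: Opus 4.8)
The plan is to rerun the inductive construction of \cref{transfer_ai_alg}, building $\mu^A_k$ and $\epsilon_k$ one $k$ at a time, while arranging at each stage that the new maps are \emph{reduced}: $\mu^A_k\eta^A_k = 0$ for $k \neq 2$ and $\epsilon_k\eta^A_k = 0$ for $k \neq 1$. Together with the unit axiom for $\mu^A_2$ and the normalisations $\partial(1_A)=0$ and $\epsilon_1\eta^A = \eta^B$, this is precisely strict unitality \cref{alg_unital} of the structure and of the morphism. The base case $n=1$ requires nothing new, since $\mu^A_1\eta^A_1 = \partial(1_A) = 0$ and $\epsilon_1\eta^A_1 = 1_B = \eta^B$ hold by hypothesis; the inductive hypothesis at stage $n$ is that $A$ is a strictly unital $\an{n}$-algebra and $\epsilon$ a strictly unital morphism of $\an{n}$-algebras.

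The device that makes reducedness stable is the subcomplex $W_k \subseteq A^{\otimes k}$ spanned by the tensors with at least one tensor factor in the summand $Q$; it equals the image of $\eta^A_k$, and since $\partial(1_A)=0$ it satisfies $\partial(W_k)\subseteq W_k$. The graded splitting $A = Q\oplus\bar{A}$ gives $A^{\otimes k} = W_k \oplus \bar{A}^{\otimes k}$ as graded modules, so the quotient complex $A^{\otimes k}/W_k$ has underlying module $\bar{A}^{\otimes k}$, a bounded below complex of projectives. A map out of $A^{\otimes k}$ is reduced exactly when it kills $W_k$, i.e.\ factors through $A^{\otimes k}/W_k$.

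For the inductive step with $n\geq 2$, hence $k=n+1\geq 3$, I would carry out the obstruction argument of \cref{transfer_ai_alg} on this quotient. The strictly unital identities noted above give $\obs{n}{A}\eta^A_{n+1}=0$ and $\obs{n}{\epsilon}\eta^A_{n+1}=0$, while $\mu^B_{n+1}\epsilon_1^{\otimes(n+1)}\eta^A_{n+1}=0$ because $\epsilon_1\eta^A=\eta^B$ inserts the strict unit $1_B$ and $\mu^B_{n+1}$ annihilates unit insertions for $n+1\neq 2$. Thus the chain map $\obs{n}{A}$ (a cocycle by \cref{obs_alg_del}) and its null-homotopy $\obs{n}{\epsilon} - \mu^B_{n+1}\epsilon_1^{\otimes(n+1)}$ (as in \cref{obs_mor_del}) both vanish on $W_{n+1}$ and descend to $A^{\otimes(n+1)}/W_{n+1}$. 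Applying \cref{null_homotopic_qi} to this quotient produces reduced maps $\mu^A_{n+1}$ and $\epsilon_{n+1}$; since every term of \cref{obs_alg_stasheff,obs_alg_mor_stasheff} separately vanishes on $W_{n+1}$, precomposing the identities obtained on the quotient with the projection $A^{\otimes(n+1)}\to A^{\otimes(n+1)}/W_{n+1}$ shows they hold on all of $A^{\otimes(n+1)}$.

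The step $k=2$ is the one genuinely new point, and the part I expect to be the main obstacle, because $\mu^A_2$ must be the strict unit rather than a reduced map. Here $\obs{1}{A}$ and $\obs{1}{\epsilon}$ are empty, so I only need a chain map $\mu^A_2$ that restricts to the unit multiplication on $W_2$ and a reduced homotopy $\epsilon_2$ with $\partial(\epsilon_2)=\epsilon_1\mu^A_2 - \mu^B_2\epsilon_1^{\otimes 2}$. The prescribed values of $\mu^A_2$ on $W_2$ already form a chain map (using $\partial(1_A)=0$), and on $W_2$ the unit axioms in $A$ and $B$ force $\epsilon_1\mu^A_2 = \mu^B_2\epsilon_1^{\otimes 2}$ on the nose. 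I would therefore feed into the relative lifting lemma \cref{lifting_cx} the commutative square with top $\mu^A_2|_{W_2}\colon W_2 \to A$, left the inclusion $W_2\hookrightarrow A^{\otimes 2}$ (degreewise injective with cokernel $A^{\otimes 2}/W_2$ a bounded below complex of projectives), bottom $\mu^B_2\epsilon_1^{\otimes 2}\colon A^{\otimes 2}\to B$ and right $\epsilon_1$. The lemma returns a chain map $\mu^A_2$ extending the unit multiplication, hence satisfying the unit axiom, together with a homotopy $\epsilon_2$ with $\epsilon_2|_{W_2}=0$, which is exactly what is needed. Finally, when $\epsilon_1$ is degreewise surjective, replacing \cref{lifting_cx,null_homotopic_qi} by the strict lift of \cref{lifting_cx_surj} throughout makes each triangle commute honestly, forcing $\epsilon_k=0$ for $k\geq 2$ and hence the strict morphism claimed, just as in \cref{transfer_ai_alg}.
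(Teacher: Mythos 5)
Your proposal is correct and follows essentially the same route as the paper's proof: a special $k=2$ step that feeds the unit-prescribed square into \cref{lifting_cx} (whose ``moreover'' clause gives the reduced homotopy $\epsilon_2$), and for $k\geq 3$ descending the obstructions to the quotient with underlying module $\bar{A}^{\otimes k}$ (the cokernel of $\eta^A_k$) before running the argument of \cref{transfer_ai_alg}. The only differences are cosmetic --- you phrase the reduction via the subcomplex $W_k=\operatorname{im}(\eta^A_k)$ rather than via factoring through $\coker(\eta^A_k)$, and your indexing of the induction is slightly cleaner.
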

\begin{proof}
This proof is similar to that of \cref{transfer_ai_alg}. The key difference is that we first restrict the maps to $\bar{A}^{\otimes (n+1)}$, before we use the argument of \cref{transfer_ai_alg}. 

We let $\eta^A \colon Q \to A$ be the natural inclusion. It will serve as the unit map. For every positive integer $n$ the map $\eta^A_n$ induced by $\eta^A$ is degreewise injective, and $\coker(\eta^A_n) = \bar{A}^{\otimes n}$ is a bounded complex of projectives.

For $n=1$ we set $\mu^A_1 \colonequals \partial^A$. Then $\mu^A_1$, $\epsilon_1$ and $\eta^A$ satisfy the Stasheff identities \cref{alg_stasheff,alg_mor_stasheff} as well as the strictly unital conditions \cref{alg_unital,alg_mor_unital}.

For $n = 2$ we consider the commutative diagram
\begin{equation*}
\begin{tikzcd}
A \otimes Q + Q \otimes A \ar[rr] \ar[d,"\eta^A_2"] \&\& A \ar[d,"\epsilon_1"] \\
A \otimes A \ar[r,"{\epsilon_1 \otimes \epsilon_1}"] \& R \otimes R \ar[r,"{\mu^R_2}"] \& R
\end{tikzcd}
\end{equation*}
By \cref{lifting_cx}, there exists a chain map $\mu^A_2 \colon A \otimes A \to A$ such that the upper left triangle commutes and the lower right triangle commutes up to a homotopy $\epsilon_2$. The former ensures the strictly unital conditions \cref{alg_unital,alg_mor_unital}, the latter the Stasheff identity \cref{alg_mor_stasheff}. The Stasheff identity for algebras \cref{alg_stasheff} holds since $\mu^A_2$ is a chain map.

Let $n \geq 3$ and assume $A$ is a strictly unital $\an{n}$-algebra and $\epsilon$ a morphism of strictly unital $\an{n}$-algebras. Since $\obs{n}{A} \eta^A_{n+1} = 0$ and $\obs{n}{\phi} \eta^A_{n+1} = 0$, the obstructions factor through $\bar{A}^{\otimes (n+1)}$. We denote by $\obsr{n}{A}$ and $\obsr{n}{\phi}$ these maps. As graded modules $\bar{A}^{\otimes (n+1)}$ is a direct summand of $A^{\otimes (n+1)}$ and the maps $\obsr{n}{A}$ and $\obsr{n}{\phi}$ are in fact the restrictions of $\obs{n}{A}$ and $\obs{n}{\phi}$ to $\bar{A}^{\otimes (n+1)}$. Hence they satisfy the analogous identities as $\obs{n}{A}$ and $\obs{n}{\phi}$ in the proof of \cref{transfer_ai_alg}, and by the same argument as in the proof of \cref{transfer_ai_alg} we obtain morphisms $\bar{\mu}^A_{n+1} \colon \bar{A}^{\otimes (n+1)} \to A$ and $\bar{\epsilon}_{n+1} \colon \bar{A}^{\otimes (n+1)} \to B$. Their pre-composition with $A^{\otimes (n+1)} \to \bar{A}^{\otimes (n+1)}$ yields the desired morphisms.
\end{proof}

\begin{proposition} \label{lift_unital_ai_mor}
Let $\epsilon \colon B \to C$ be a quasi-isomorphism of strictly unital $\ai$-algebras and $\psi \colon A \to C$ a morphism of strictly unital $\ai$-algebras with $A$ split unital and $(A,\mu^A_1)$ a bounded complex of projectives. Then there exists a morphism of strictly unital $\ai$-algebras $\phi \colon A \to B$ such that $\epsilon \phi \simeq \psi$ as morphisms of strictly unital $\ai$-algebras.
\end{proposition}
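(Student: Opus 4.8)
The plan is to run the inductive construction from the proof of \cref{lift_ai_mor}, modified by the restriction-to-$\bar A$ device used in the proof of \cref{transfer_unital_ai_alg} so that the constructed maps stay strictly unital. Since $A$ is split unital, let $\eta^A \colon Q \to A$ be the splitting inclusion; for every $n \geq 1$ the induced map $\eta^A_n$ is degreewise split injective with cokernel $\coker(\eta^A_n) = \bar A^{\otimes n}$, a bounded below complex of projectives, and the quotient map $A^{\otimes n} \to \bar A^{\otimes n}$ is a chain map because $\partial(1_A) = 0$. I would construct $\phi_n$ and $\sigma_n$ by induction on $n$ so that $\phi$ is a morphism of strictly unital $\an{n}$-algebras and $\sigma$ is a strictly unital $\an{n}$-homotopy witnessing $\epsilon \phi \simeq \psi$; that is, besides the Stasheff identities they must satisfy $\phi_1 \eta^A = \eta^B$, $\phi_k \eta^A_k = 0$ for $2 \leq k \leq n$, and $\sigma_k \eta^A_k = 0$ for $1 \leq k \leq n$.

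For the base case I would apply \cref{lifting_cx} to the commutative square whose left map is $\eta^A \colon Q \to A$, right map is the quasi-isomorphism $\epsilon_1 \colon B \to C$, top map is $\eta^B \colon Q \to B$, and bottom map is $\psi_1 \colon A \to C$. This square commutes since $\epsilon$ and $\psi$ are strictly unital, giving $\epsilon_1 \eta^B = \eta^C = \psi_1 \eta^A$. The lemma yields a chain map $\phi_1 \colon A \to B$ with $\phi_1 \eta^A = \eta^B$ together with a homotopy for $\epsilon_1 \phi_1 \simeq \psi_1$; after a sign change this is a $\sigma_1$ with $\epsilon_1 \phi_1 - \psi_1 = \partial(\sigma_1)$ and, by the ``moreover'' clause of \cref{lifting_cx}, with $\sigma_1 \eta^A = 0$. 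These are exactly the first strictly unital Stasheff identities.

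For the inductive step I would reuse verbatim the homotopy computation of \cref{lift_ai_mor}: the map $Z_n = \obs{n}{\phi} - \mu^B_{n+1} \phi_1^{\otimes(n+1)} + \phi_1 \mu^A_{n+1} \colon A^{\otimes(n+1)} \to B$ is a chain map, and $\epsilon_1 Z_n$ is null-homotopic with the explicit homotopy $H_n = \obs{n}{\sigma} + \psi_{n+1} + \sigma_1 \mu^A_{n+1} - \sum_{p=2}^{n+1} \sum_{\bmalpha} (-1)^{t_1(\bmalpha)} \epsilon_p \bmphi^{\otimes \bmalpha} + (-1)^n \mu^C_{n+1} \sum_{u+w=n} ((\epsilon_1 \phi_1)^{\otimes u} \otimes \sigma_1 \otimes \psi_1^{\otimes w})$. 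The new ingredient is to check that both $Z_n$ and $H_n$ vanish on $\eta^A_{n+1}$, so that they factor through the quotient complex $\bar A^{\otimes(n+1)} = \coker(\eta^A_{n+1})$ as chain maps $\bar Z_n$ and $\bar H_n$. Applying \cref{null_homotopic_qi} to $\bar Z_n$, with the bounded below projective complex $\bar A^{\otimes(n+1)}$ and the quasi-isomorphism $\epsilon_1$, produces $\bar\phi_{n+1}$ and $\bar\sigma_{n+1}$; precomposing with the chain quotient map $A^{\otimes(n+1)} \to \bar A^{\otimes(n+1)}$ gives $\phi_{n+1}$ and $\sigma_{n+1}$. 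Since the quotient map is a chain map these satisfy \cref{obs_alg_mor_stasheff,obs_alg_htpy_stasheff} on all of $A^{\otimes(n+1)}$, and since the quotient map kills the image of $\eta^A_{n+1}$ they additionally satisfy $\phi_{n+1} \eta^A_{n+1} = 0$ and $\sigma_{n+1} \eta^A_{n+1} = 0$, completing the induction.

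The hard part is exactly the vanishing of $Z_n \eta^A_{n+1}$ and $H_n \eta^A_{n+1}$. For $n \geq 2$ this holds summand by summand: the unital obstruction identities give $\obs{n}{\phi}\,\eta^A_{n+1} = 0$ and $\obs{n}{\sigma}\,\eta^A_{n+1} = 0$; the relations $\phi_1 \eta^A = \eta^B$, $\psi_1 \eta^A = \eta^C$ and $\sigma_1 \eta^A = 0$ transport each inserted unit through the outermost maps; and then $\mu^A_{n+1} \eta^A_{n+1} = 0$, $\mu^B_{n+1} \eta^B_{n+1} = 0$, $\mu^C_{n+1} \eta^C_{n+1} = 0$, together with the inductive hypotheses $\phi_k \eta^A_k = 0$ and $\psi_{n+1} \eta^A_{n+1} = 0$ and the unitality $\epsilon_p \eta^B_p = 0$, annihilate all remaining terms (for the composite terms $\epsilon_p \bmphi^{\otimes \bmalpha}$ one notes that an inserted unit either lands in a $\phi_{\geq 2}$ block and dies by $\phi_k \eta^A_k = 0$, or passes through a $\phi_1$ to a unit of $B$ and dies by $\epsilon_p \eta^B_p = 0$). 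The delicate case is $n = 1$ (the construction of $\phi_2$ and $\sigma_2$), where $\mu_2$ does not kill units: here the surviving summands do not vanish individually but cancel in pairs, namely $\phi_1 \mu^A_2$ against $\mu^B_2 \phi_1^{\otimes 2}$ in $Z_1$ (both restricting to $\phi_1$ on $\eta^A_2$, with opposite signs), and $\sigma_1 \mu^A_2$ against the two summands of the final $\mu^C_2$-term of $H_1$, while $\psi_2$ and $\epsilon_2 \phi_1^{\otimes 2}$ vanish separately. These cancellations rely on $\mu_2 \eta_2 = \id$ and the base-case relations, and form the unital analogue of the special $n = 2$ step in \cref{transfer_unital_ai_alg}. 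I expect this low-degree bookkeeping to be the only genuinely subtle point, the rest being a direct transcription of \cref{lift_ai_mor}.
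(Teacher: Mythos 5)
Your proposal follows the paper's proof essentially verbatim: the same base case via \cref{lifting_cx} applied to the square built from $\eta^A$, $\eta^B$, $\epsilon_1$ and $\psi_1$, and the same inductive step that restricts the obstruction data to $\bar{A}^{\otimes (n+1)}$ before running the argument of \cref{lift_ai_mor}. The only difference is one of detail: you explicitly verify that the relevant maps vanish on unit insertions --- including the low-degree cancellations needed to construct $\phi_2$ and $\sigma_2$, where the terms cancel in pairs rather than individually --- a point the paper leaves implicit in its appeal to the proofs of \cref{transfer_unital_ai_alg,lift_ai_mor}.
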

\begin{proof}
Let $A = Q \oplus \bar{A}$ where $Q$ is generated by the strict unit $1_A$.

For $n=1$ we apply \cref{lifting_cx} to
\begin{equation*}
\begin{tikzcd}
Q \ar[r,"{\eta^B}"] \ar[d,"\eta^A" swap] \& B \ar[d,"{\epsilon_1}","\sim" swap] \\
A \ar[r,"{\psi_1}"] \& C
\end{tikzcd}
\end{equation*}
to obtain $\phi_1$ and $\sigma_1$. 

Let $n \geq 2$ and assume $\phi$ is a morphism of strictly unital $\an{n}$-algebras and $\sigma$ an $\an{n}$-homotopy witnessing $\epsilon \phi \simeq \psi$. As in the proof of \cref{transfer_unital_ai_alg}, we restrict the maps to $\bar{A}^{\otimes (n+1)}$ before we apply the arguments of the proof of \cref{lift_ai_mor}. This yields maps with the desired properties.
\end{proof}

\begin{corollary} \label{unique_transfer_unital_ai_alg}
The transfer of strictly unital $\ai$-algebra structure from $B$ to $A$ in \cref{transfer_unital_ai_alg} is unique up to homotopy equivalence. \qed
\end{corollary}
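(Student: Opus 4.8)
The plan is to follow the proof of \cref{unique_transfer_ai_alg} essentially verbatim, replacing every invocation of \cref{lift_ai_mor} by its strictly unital refinement \cref{lift_unital_ai_mor}, so that all morphisms and homotopies produced remain strictly unital. Suppose $(A,\hat{\mu}_k)$ and $(A,\tilde{\mu}_k)$ are two split unital $\ai$-algebra structures on $A$ equipped with quasi-isomorphisms of strictly unital $\ai$-algebras $\hat{\epsilon},\tilde{\epsilon} \colon A \to B$ extending $\epsilon_1$, as produced by \cref{transfer_unital_ai_alg}. I want to exhibit a homotopy equivalence of strictly unital $\ai$-algebras between them.

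First I would apply \cref{lift_unital_ai_mor} to the quasi-isomorphism $\tilde{\epsilon} \colon (A,\tilde{\mu}_k) \to B$ and the morphism $\hat{\epsilon} \colon (A,\hat{\mu}_k) \to B$; the source $(A,\hat{\mu}_k)$ is split unital with $\mu^A_1 = \partial^A$ a bounded below complex of projectives, so the hypotheses are met. This yields a morphism of strictly unital $\ai$-algebras $\phi \colon (A,\hat{\mu}_k) \to (A,\tilde{\mu}_k)$ with $\tilde{\epsilon}\phi \simeq \hat{\epsilon}$ as strictly unital morphisms. Because $\hat{\epsilon}_1$ and $\tilde{\epsilon}_1$ are quasi-isomorphisms and $\tilde{\epsilon}_1 \phi_1$ is homotopic to $\hat{\epsilon}_1$, the map $\phi_1$ is a quasi-isomorphism; hence $\phi$ is a quasi-isomorphism of strictly unital $\ai$-algebras.

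Next I would build a homotopy inverse exactly as in \cref{unique_transfer_ai_alg}. Applying \cref{lift_unital_ai_mor} to the quasi-isomorphism $\phi$ and the strictly unital identity $\id_A$ produces a strictly unital $\psi$ with $\phi\psi \simeq \id_A$; since such a right inverse is again a quasi-isomorphism, a further application (lifting $\id_A$ along $\psi$) gives a strictly unital $\psi'$ with $\psi\psi' \simeq \id_A$. The identity $\id_A$ is strictly unital, so \cref{lift_unital_ai_mor} keeps us inside the strictly unital setting at each step. The usual computation $\psi' \simeq (\phi\psi)\psi' = \phi(\psi\psi') \simeq \phi$, together with $\phi\psi \simeq \id_A$, shows that $\psi$ is a two-sided strictly unital homotopy inverse of $\phi$, so $\phi$ is a homotopy equivalence of strictly unital $\ai$-algebras.

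The points requiring care, and the part I expect to be the main obstacle, are bookkeeping rather than conceptual: one must check at each invocation that the source is split unital and a bounded below complex of projectives, that the map being lifted (here $\id_A$) is strictly unital, and that homotopy equivalence of strictly unital morphisms is compatible with composition so that the identities among the displayed composites are legitimate. All of these follow from the constructions in \cref{transfer_unital_ai_alg,lift_unital_ai_mor} and the strictly unital conditions \cref{alg_mor_unital,alg_htpy_unital}; in particular the composites and the homotopies built in \cref{lift_unital_ai_mor} are strictly unital by construction, so no term involving the unit $\eta^A$ ever obstructs the argument.
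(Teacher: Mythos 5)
Your proposal is correct and matches the paper's intended argument: the corollary is left without a written proof precisely because it follows by rerunning the proof of \cref{unique_transfer_ai_alg} with \cref{lift_unital_ai_mor} replacing \cref{lift_ai_mor}, which is exactly what you do. The only immaterial difference is the arrangement of the two-sided-inverse step (you produce $\psi'$ with $\psi\psi' \simeq \id_A$ and deduce $\psi' \simeq \phi$, whereas the paper takes $\psi'$ with $\psi'\phi \simeq \id_A$ and deduces $\psi \simeq \psi'$); both close in the standard way.
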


\section{\texorpdfstring{$A_\infty$}{A-infinity}-modules} \label{sec:modules}

Finally, we sketch the arguments for the transfer of $\ai$-module structures. The approach is analogous to that for $\ai$-algebras, so we mimic \cref{sec:aialg,sec:obstruction,sec:transfer,sec:transfer_unital}. 

\begin{discussion}
Let $A$ be an $\ai$-algebra over $Q$. An \emph{$\ai$-modules over $A$} is a graded module $M = \{M_d\}_{d \in \BZ}$ equipped with maps\begin{equation*}
\mu^M_k \colon A^{\otimes (k-1)} \otimes M \to M \quad \text{for } k \geq 1
\end{equation*}
of degree $(k-2)$ satisfying
\begin{equation} \label{mod_stasheff}
\begin{gathered}
\sum_{\substack{u+v+w = k\\u \geqslant 0, v,w \geqslant 1}} (-1)^{u+vw} \mu^M_{u+1+w} (\id_A^{\otimes u} \otimes \mu^A_v \otimes \id_A^{\otimes (w-1)} \otimes \id_M) \\
+ \sum_{\substack{u+v = k \\ u \geqslant 0, v \geqslant 1}} (-1)^u \mu^M_{u+1} (\id_A^{\otimes u} \otimes \mu^M_v) = 0\,.
\end{gathered}
\end{equation}
These identities are similar to \cref{alg_stasheff}, and provide relations of maps $A^{\otimes (k-1)} \otimes M \to M$. For convenience we drop the superscript on the higher multiplication maps and the subscript on the identity maps, and combine the sum as in \cref{alg_stasheff}. 

Over a strictly unital $\ai$-algebra an $\ai$-module is \emph{strictly unital}, if
\begin{equation*}
\begin{gathered}
\mu^M_2(\eta \otimes \id_M) = \id_M \quad \text{and} \\
\mu^M_k(\id^{\otimes u} \otimes \eta^A \otimes \id^{\otimes (w+1)}) = 0 \quad \text{for } k \geq 2 \text{ and } u+w = k-2\,.
\end{gathered}
\end{equation*}

For an $\ai$-module we define $m^M_k \colon (\susp A)^{\otimes (k-1)} \otimes M \to M$ via $m^M_k (\shift^{\otimes (k-1)} \otimes \id_M) = (-1)^{k-1} \mu^M_k$. It is straightforward to check, that the Stasheff identity \cref{mod_stasheff} is equivalent to
\begin{equation*}
\sum_{\substack{u+v+w = k\\u,w \geqslant 0, v \geqslant 1}} m_{u+1+w} (\id^{\otimes u} \otimes m_v \otimes \id^{\otimes w}) = 0\,.
\end{equation*}
The shifted maps $m_k$ are of degree $-1$. Similar to $\ai$-algebras, an $\ai$-module structure on $M$ can be encoded as a coderivation $D_M$ on the comodule $\tcoa{\susp A} \otimes M$ over $\tcoa{\susp A}$ with $D_M^2 = 0$; see for example \cite[Lemme~2.1.2.1b]{LefevreHasegawa:2003}. 
\end{discussion}

\begin{discussion}
Let $M$ and $N$ be $\ai$-modules over an $\ai$-algebra $A$. A \emph{morphism of $\ai$-modules} $\rho \colon M \to N$ consists of maps
\begin{equation*}
\rho_k \colon A^{\otimes (k-1)} \otimes M \to N \quad \text{for } k \geq 1
\end{equation*}
of degree $(k-1)$ satisfying
\begin{equation} \label{mod_mor_stasheff}
\sum_{\substack{u+v+w=k\\u,w \geqslant 0, v \geqslant 1}} (-1)^{u+vw} \rho_{u+1+w} (\id^{\otimes u} \otimes \mu_v \otimes \id^{\otimes w}) = \sum_{\substack{u+v=k\\u \geqslant 0, v \geqslant 1}} \mu_{u+1} (\id^{\otimes u} \otimes \rho_v)\,.
\end{equation}

A morphism of strictly unital $\ai$-modules needs to additionally satisfy
\begin{equation*}
\rho_k(\id^{\otimes u} \otimes \eta \otimes \id^{\otimes (w+1)}) = 0 \quad \text{for } k \geq 2 \text{ and } u+w = k-2\,.
\end{equation*}

We define $r_k \colon (\susp A)^{\otimes (k-1)} \otimes M \to N$ via $r_k (\shift^{\otimes (k-1)} \otimes \id) = (-1)^{k-1} \rho_k$. It is straightforward to check, that \cref{mod_mor_stasheff} is equivalent to
\begin{equation*} \label{mod_mor_shift_stasheff}
\begin{gathered}
\sum_{\substack{u+v+w=k\\u,w \geqslant 0, v \geqslant 1}} r_{u+1+w} (\id^{\otimes u} \otimes m_v \otimes \id^{\otimes w}) = \sum_{\substack{u+v=k\\u \geqslant 0, v \geqslant 1}} \mu_{u+1} (\id^{\otimes u} \otimes r_v)\,.
\end{gathered}
\end{equation*}
The shifted maps are of degree zero, and a morphism of $\ai$-modules corresponds to a morphism of comodules $R \colon \tcoa{\susp A} \otimes M \to \tcoa{\susp A} \otimes N$ satisfying $R D^M = D^N R$; see \cite[Lemme~2.1.2.1a]{LefevreHasegawa:2003}. 
\end{discussion}

\begin{discussion}
Let $\rho \colon M \to N$ and $\pi \colon L \to M$ be morphisms of $\ai$-modules over an $\ai$-algebra $A$. The composition $\rho \circ \pi \colon L \to N$ is given by
\begin{equation*}
(\rho \circ \pi)_k \colonequals (-1)^{k-1} \sum_{\substack{u+v=k\\u \geqslant 0, v \geqslant 1}} \rho_{u+1} (\id^{\otimes u} \otimes \pi_v)\,.
\end{equation*}
The identity morphism $\id_M \colon M \to M$ of $\ai$-modules is given by
\begin{equation*}
(\id_M)_k = \begin{cases}
\id_M & k = 1 \\
0 & \text{else}\,.
\end{cases}
\end{equation*}
\end{discussion}

\begin{discussion}
Let $\phi \colon A \to B$ be a morphism of $\ai$-algebras and $M$ an $\ai$-module over $B$. Then there exists a natural $\ai$-module structure over $A$ on $M$ given by
\begin{equation*}
\mu^{\phi_* M}_1 \colonequals \mu^M_1 \quad \text{and} \quad \mu^{\phi_* M}_k \colonequals \sum_{p=2}^k \sum_{\substack{\bmalpha \in \BN^{p-1}\\|\bmalpha|=k-1}} \mu^M_p (\bmphi^{\otimes \bmalpha} \otimes \id_M)\,;
\end{equation*}
cf.\@ \cite[Section~7]{Lu/Palmieri/Wu/Zhang:2004}. We write $\phi_* M$ for the restricted $\ai$-module over $A$. 
\end{discussion}

\begin{discussion}
A morphism of $\ai$-modules $\rho \colon M \to N$ is \emph{null-homotopic}, if there exist maps
\begin{equation*}
\tau_k \colon A^{\otimes (k-1)} \otimes M \to N \quad \text{for } k \geq 1
\end{equation*}
of degree $k$ satisfying
\begin{equation} \label{mod_hpty_stasheff}
\begin{aligned}
\rho_k = \sum_{\substack{u+v+w=k\\u,w \geqslant 0, v \geqslant 1}} (-1)^{u+vw} & \tau_{u+1+w} (\id^{\otimes u} \otimes \mu_v \otimes \id^{\otimes w}) \\
&+ \sum_{\substack{u+v=k\\u \geqslant 0, v \geqslant 1}} (-1)^u \mu_{u+1} (\id^{\otimes u} \otimes \tau_v)\,.
\end{aligned}
\end{equation}
We call $\tau$ an \emph{$\ai$-homotopy}. We say morphisms $\rho, \pi \colon M \to N$ are \emph{homotopic} and write $\rho \simeq \pi$, if $\rho - \pi$ is null-homotopic.

When $\rho \colon M \to N$ is a morphism of strictly unital $\ai$-modules, we say that it is null-homotopic if the above condition holds and
\begin{equation*}
\tau_k(\id^{\otimes u} \otimes \eta \otimes \id^{\otimes (w+1)}) = 0 \quad \text{for } k \geq 2 \text{ and } u+w = k-2\,.
\end{equation*}

We define $t_k \colon (\susp A)^{\otimes (k-1)} \otimes M \to N$ via $t_k (\shift^{\otimes (k-1)} \otimes \id) = (-1)^{k-1} \tau_k$. It is straightforward to check that \cref{mod_hpty_stasheff} is equivalent to
\begin{equation*}
r_k = \sum_{\substack{u+v+w=k\\u,w \geqslant 0, v \geqslant 1}} t_{u+1+w} (\id^{\otimes u} \otimes m_v \otimes \id^{\otimes w}) + \sum_{\substack{u+v=k\\u \geqslant 0, v \geqslant 1}} m_{u+1} (\id^{\otimes u} \otimes t_v)\,.
\end{equation*}
\end{discussion}

Analogously to \cref{sec:obstruction} we define the $\an{n}$-structures for modules, morphisms of modules, and homotopies of morphisms of modules. So we can study the obstructions, and we obtain several relations that these obstructions satisfy.

\begin{discussion}
For an $\an{n}$-module we set
\begin{equation*}
\obs{n}{M} \colonequals \sum_{\substack{u+v+w=n+1\\u,w \geqslant 0, n \geqslant v \geqslant 2}} (-1)^{u+vw} \mu_{u+1+w} (\id^{\otimes u} \otimes \mu_v \otimes \id^{\otimes w})\,.
\end{equation*}
Then a map $\mu^M_{n+1} \colon A^{\otimes n} \otimes M \to M$ makes $M$ an $\an{n+1}$-module if and only if
\begin{equation} \label{obs_mod_stasheff}
\obs{n}{M} = - \partial(\mu^M_{n+1})\,.
\end{equation}

For a morphism of $\an{n}$-modules $\rho \colon M \to N$ we set
\begin{equation*}
\begin{aligned}
\obs{n}{\rho} \colonequals \sum_{\substack{u+v=n+1\\u \geqslant 0, v \geqslant 1}} & \mu_{u+1} (\id^{\otimes u} \otimes \rho_v) \\
&- \sum_{\substack{u+v+w=n+1\\u,w \geqslant 0, n \geqslant v \geqslant 2}} (-1)^{u+vw} \rho_{u+1+w} (\id^{\otimes u} \otimes \mu_v \otimes \id^{\otimes w})\,.
\end{aligned}
\end{equation*}
If $M$ and $N$ are $\an{n+1}$-modules, then a map $\rho_{n+1} \colon A^{\otimes n} \otimes M \to N$ makes $\rho$ a morphism of $\an{n+1}$-modules if and only if
\begin{equation} \label{obs_mod_mor_stasheff}
\obs{n}{\rho} = \partial(\rho_{n+1}) + \mu^N_{n+1} (\id_A^{\otimes n} \otimes \rho_1) - \rho_1 \mu^M_{n+1}\,.
\end{equation}

For an $\an{n}$-homotopy $\tau$ witnessing that $\rho \colon M \to N$ is null-homotopic, we set
\begin{equation*}
\begin{aligned}
\obs{n}{\tau} \colonequals \sum_{\substack{u+v+w=n+1\\u,w \geqslant 0, n \geqslant v \geqslant 2}} (-1)^{u+vw} & \tau_{u+1+w} (\id^{\otimes u} \otimes \mu_v \otimes \id^{\otimes w}) \\
&+ \sum_{\substack{u+v=n+1\\u \geqslant 0, n \geqslant v \geqslant 2}} (-1)^u \mu_{u+1} (\id^{\otimes u} \otimes \tau_v)
\end{aligned}
\end{equation*}
If $\rho$ is a morphism of $\an{n+1}$-modules, then a map $\tau_{n+1} \colon A^{\otimes n} \otimes M \to N$ makes $\tau$ a $\an{n+1}$-homotopy witnessing $\rho \simeq 0$ if and only if
\begin{equation} \label{obs_mod_hpty_stasheff}
\rho_{n+1} = \obs{n}{\tau} + \partial(\tau_{k+1}) + \tau_1 \mu^M_{n+1} + (-1)^n \mu^N_{n+1} (\id_A^{\otimes n} \otimes \tau_1)\,.
\end{equation}
\end{discussion}

\begin{lemma} \label{obs_mod_del}
For any $\an{n}$-module $M$ one has
\begin{equation*}
\partial(\obs{n}{M}) = 0\,. \pushQED{\qed} \qedhere \popQED
\end{equation*}
\end{lemma}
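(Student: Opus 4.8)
The plan is to mimic the proof of \cref{obs_alg_del} verbatim, replacing the coalgebra $\tcoa{\susp A}$ by the comodule $\tcoa{\susp A} \otimes M$ over it. Recall that an $\an{n}$-module structure on $M$ is encoded by a coderivation $D_M$ on this comodule with $D_M^2$ vanishing in low tensor degrees, exactly as an $\an{n}$-algebra structure is encoded by $D = \cder(m^A_1, \ldots, m^A_n)$. The obstruction $\obs{n}{M}$ is, up to the shift signs, precisely the component of $D_M^2$ landing in tensor degree $0$ (i.e.\@ in $M$) on inputs from $(\susp A)^{\otimes n} \otimes M$, so the assertion $\partial(\obs{n}{M}) = 0$ should fall out of manipulating $D_M^2$ against the differential part of $D_M$.

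Concretely, I would first fix $D^A \colonequals \cder(m^A_1, \ldots, m^A_n)$ for the ambient $\an{n}$-algebra and let $D_M$ be the comodule coderivation built from the shifted module maps $m^M_1, \ldots, m^M_n$. I then split off the differential: let $D_{M,1}$ be the part built from $m^A_1$ on the algebra factors together with $m^M_1$ on $M$, and set $\bar{D}_M \colonequals D_M - D_{M,1}$, so that $D_{M,1}$ preserves tensor degree while $\bar{D}_M$ strictly lowers it. Because $M$ is an $\an{n}$-module, the restriction of $D_M^2$ to $(\susp A)^{\otimes (k-1)} \otimes M \to (\susp A)^{\otimes (i-1)} \otimes M$ vanishes for $k < n$, and the degree-lowering property of $\bar{D}_M$ and $\bar{D}^A$ yields
\begin{equation*}
D_M^2 \bar{D}_M = 0 = \bar{D}_M D_M^2
\end{equation*}
when restricted to $(\susp A)^{\otimes n} \otimes M \to M$. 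The final step is the straightforward identity
\begin{equation*}
D_{M,1} \bar{D}_M^2 = \bar{D}_M^2 D_{M,1}
\end{equation*}
on the same restriction; unwinding the shift via $m^M_k(\shift^{\otimes(k-1)} \otimes \id) = (-1)^{k-1}\mu^M_k$ and $m^A_k\shift^{\otimes k} = -\shift\mu^A_k$ converts this into $-\partial^M \obs{n}{M} = (-1)^{n+1} \obs{n}{M} \partial^{A^{\otimes n} \otimes M}$, which is the claim.

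The main obstacle will be the bookkeeping, not the conceptual content. Unlike the purely algebraic setting, the comodule coderivation $D_M$ mixes two genuinely different families of operations — the algebra multiplications $\mu^A_v$ acting inside the $\tcoa{\susp A}$ factor and the module multiplications $\mu^M_v$ acting on the terminal $M$-factor — which carry different shift-sign conventions (note the extra $(-1)^{k-1}$ in the definition of $m^M_k$). Verifying $D_{M,1}\bar{D}_M^2 = \bar{D}_M^2 D_{M,1}$ therefore requires treating separately the terms whose innermost operation is an algebra multiplication and those whose innermost operation is a module multiplication, and checking that the signs match in each case. Since this is exactly the kind of ``straightforward computation'' that \cref{obs_alg_del} already absorbs, and since the paper explicitly signals that the module proofs are omitted, I expect the argument to be stated as ``by the same argument as in \cref{obs_alg_del}'' with the sign verification left to the reader.
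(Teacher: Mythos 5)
Your proposal is correct and coincides with the paper's intended argument: the paper deliberately omits the proof of \cref{obs_mod_del}, stating that the module-case arguments are analogous to the algebra case, and your proof is exactly that analogue --- encode the $\an{n}$-module structure as a coderivation $D_M$ on the comodule $\tcoa{\susp A} \otimes M$, split off the tensor-degree-preserving part $D_{M,1}$, and deduce $D_{M,1}\bar{D}_M^2 = \bar{D}_M^2 D_{M,1}$ on the relevant restriction just as in \cref{obs_alg_del}. The only point worth making explicit is that the vanishing of the components of $D_M^2$ landing in positive tensor degree uses the Stasheff identities of the ambient algebra $A$ as well as those of $M$, which is automatic in the paper's setting since $M$ is a module over an $\ai$-algebra.
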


\begin{lemma} \label{obs_mod_mor}
For any morphism of $\an{n}$-modules $\rho \colon M \to N$ one has
\begin{equation*}
\rho_1 \obs{n}{M} - \obs{n}{N} (\id^{\otimes n} \otimes \rho_1) = \partial(\obs{n}{\rho})\,. \pushQED{\qed} \qedhere \popQED
\end{equation*}
\end{lemma}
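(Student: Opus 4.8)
The plan is to mimic the proof of \cref{obs_mor_del}, now working in the bar construction for $\ai$-modules. Recall that an $\an{n}$-module structure on $M$ is encoded by a coderivation $D^M$ of degree $-1$ on the comodule $\tcoa{\susp A}\otimes M$ over the dg coalgebra $\tcoa{\susp A}$, built from the shifted maps $m^M_1,\ldots,m^M_n$, and that a morphism $\rho\colon M\to N$ of $\an{n}$-modules corresponds to a comodule morphism $R\colon \tcoa{\susp A}\otimes M\to\tcoa{\susp A}\otimes N$ built from $r_1,\ldots,r_n$. First I would set $D^M_1$ to be the part of $D^M$ induced by $m^M_1$, put $\bar D^M\colonequals D^M-D^M_1$, and define $D^N_1,\bar D^N$ analogously on $N$ as well as $R_1,\bar R$ for the morphism; here $\bar D^M$ collects the higher multiplications $m^M_2,\ldots,m^M_n$ and $\bar R$ the higher components $r_2,\ldots,r_n$, each of which strictly decreases the number of tensor factors.

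Next I would record the structural identities that hold when restricted to the single component $(\susp A)^{\otimes n}\otimes M\to N$. Because $M$ and $N$ are $\an{n}$-modules, the relevant restrictions of $(D^M)^2$ and $(D^N)^2$ vanish, and because $\rho$ is a morphism of $\an{n}$-modules the relevant restriction of $RD^M-D^NR$ vanishes. As in \cref{obs_mor_del}, these translate into
\begin{equation*}
\bar D^N R D^M = \bar D^N D^N R \,,\quad R D^M \bar D^M = D^N R \bar D^M \quad\text{and}\quad \bar R D^M D^M = 0 = D^N D^N \bar R
\end{equation*}
on this component. Combining them, a direct calculation gives the single key identity
\begin{equation*}
D^N_1(\bar R\bar D^M-\bar D^N\bar R)+(\bar R\bar D^M-\bar D^N\bar R)D^M_1 = R_1\bar D^M\bar D^M-\bar D^N\bar D^N R_1
\end{equation*}
again restricted to $(\susp A)^{\otimes n}\otimes M\to N$.

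Finally I would translate this shifted identity back to the unsuspended maps using $m^M_k(\shift^{\otimes(k-1)}\otimes\id)=(-1)^{k-1}\mu^M_k$ and $r_k(\shift^{\otimes(k-1)}\otimes\id)=(-1)^{k-1}\rho_k$, together with $\partial^{\susp A}=-\partial^A$ and the Koszul sign rule. Since $\obs{n}{\rho}$ has degree $n-1$, the left-hand side becomes $\partial^N\obs{n}{\rho}+(-1)^n\obs{n}{\rho}\,\partial^{A^{\otimes n}\otimes M}=\partial(\obs{n}{\rho})$, while the right-hand side becomes $\rho_1\obs{n}{M}-\obs{n}{N}(\id^{\otimes n}\otimes\rho_1)$, which is exactly the claimed identity.

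I expect the main obstacle to be the sign bookkeeping in the computation of the key identity, compounded by the asymmetry of the module setting: the comodule comultiplication always keeps the module factor $M$ in the rightmost tensor slot, so the coderivation calculus differs from the purely coalgebraic case of \cref{obs_mor_del}. Verifying that $D^M_1$ and $\bar D^M$ interact with $R_1$ and $\bar R$ exactly as in the algebra proof, and that no spurious terms survive on the distinguished component, is the delicate point; once the comodule coderivation conventions are pinned down, the argument runs parallel to \cref{obs_mor_del}.
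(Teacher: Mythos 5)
Your proposal is correct and is exactly the argument the paper intends: this lemma is stated in the paper with no written proof, as the module analogue of \cref{obs_mor_del}, and your proposal transcribes that proof to the comodule bar construction on $\tcoa{\susp A}\otimes M$, with the right restriction component $(\susp A)^{\otimes n}\otimes M\to N$, the right key identity, and the right translation back to the unshifted maps. One point to fix in the write-up: the decomposition must be by tensor degree, so $D^M_1$ should be the part of $D^M$ induced by \emph{both} $m^A_1$ and $m^M_1$, while $\bar D^M$ must collect the algebra maps $m^A_2,\ldots,m^A_n$ acting on the coalgebra factor as well as the module maps $m^M_2,\ldots,m^M_n$ (and likewise for $N$); this matters because the paper's $\obs{n}{M}$ and $\obs{n}{\rho}$ combine algebra and module multiplications under the dropped-superscript convention, so if $\bar D^M$ contained only the $m^M_v$ then $\bar R\bar D^M-\bar D^N\bar R$ and $R_1\bar D^M\bar D^M-\bar D^N\bar D^N R_1$ would miss the terms involving $\mu^A_v$ and the final translation would not yield the full obstructions.
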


\begin{lemma} \label{obs_mod_comp}
For morphisms $\rho \colon M \to N$ and $\phi \colon L \to M$ of $\an{n}$-modules one has
\begin{equation*}
\obs{n}{\rho \pi} = \rho_1 \obs{n}{\pi} + \obs{n}{\rho} (\id^{\otimes n} \otimes \pi_1) + \partial(\sum_{\substack{u+v=n+1\\u \geqslant 1, v \geqslant 2}} \rho_{u+1} (\id^{\otimes u} \otimes \pi_v))\,. \pushQED{\qed} \qedhere \popQED
\end{equation*}
\end{lemma}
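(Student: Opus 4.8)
The plan is to mimic the proof of \cref{obs_comp}, but working in the bar construction for $\ai$-modules in place of $\ai$-algebras. Recall that an $\an{n}$-module structure on $M$ is encoded by a coderivation $D^M$ on the comodule $\tcoa{\susp A} \otimes M$ over $\tcoa{\susp A}$, and that a morphism of $\an{n}$-modules corresponds to a morphism of comodules. Writing $P \colon \tcoa{\susp A} \otimes L \to \tcoa{\susp A} \otimes M$ for the comodule morphism attached to $\pi$ and $R \colon \tcoa{\susp A} \otimes M \to \tcoa{\susp A} \otimes N$ for the one attached to $\rho$, the composition formula for morphisms of $\ai$-modules is arranged precisely so that $\rho\pi$ corresponds to the composite $RP$, exactly as $\psi\phi$ corresponded to $GF$ in the algebra setting.

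First I would decompose each comodule morphism into its linear and higher parts: let $P_1$ be the comodule morphism determined by $r_1$ (the shift of $\pi_1$) alone and set $\bar{P} \colonequals P - P_1$, and analogously define $R_1$ and $\bar{R}$ from $\rho$. Likewise split the coderivations as $D^L = D^L_1 + \bar{D}^L$, and similarly for $M$ and $N$, where $D_1$ is the piece coming from the shifted differential $m_1$. As in the proof of \cref{obs_alg_del}, the fact that the low-order Stasheff identities hold (the $\an{n}$-module conditions on $L,M,N$, together with the $\an{n}$-morphism conditions $P D^L = D^M P$ and $R D^M = D^N R$) forces the various mixed composites to vanish or collapse once restricted to the single component $(\susp A)^{\otimes n} \otimes L \to N$; this is because each such restriction only detects tensor words of length at most $n+1$, so the higher terms that would spoil the identities are invisible there.

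The heart of the argument is then to verify, by a direct comodule computation analogous to the displayed identity in \cref{obs_comp}, the commutator relation
\begin{equation*}
\begin{aligned}
(RP - R_1 P_1)\bar{D}^L - \bar{D}^N (RP - R_1 P_1) &= R_1(\bar{P}\bar{D}^L - \bar{D}^M\bar{P}) + (\bar{R}\bar{D}^M - \bar{D}^N\bar{R})P_1 \\
&\quad + D^N_1\bar{R}\bar{P} - \bar{R}\bar{P}\,D^L_1
\end{aligned}
\end{equation*}
when restricted to $(\susp A)^{\otimes n} \otimes L \to N$. This is obtained by expanding $RP - R_1 P_1 = R_1\bar{P} + \bar{R}P_1 + \bar{R}\bar{P}$ and repeatedly substituting the morphism identities, so that the terms group into the three displayed summands on the right. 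Replacing the shifted maps $r_k, m_k$ and their $\pi$-counterparts by the unshifted $\rho_k, \mu_k, \pi_k$ and reading off the component $A^{\otimes n} \otimes L \to N$ then converts this comodule identity into the claimed formula: the summand $R_1(\bar{P}\bar{D}^L - \bar{D}^M\bar{P})$ becomes $\rho_1\obs{n}{\pi}$, the summand $(\bar{R}\bar{D}^M - \bar{D}^N\bar{R})P_1$ becomes $\obs{n}{\rho}(\id^{\otimes n}\otimes\pi_1)$, and the last pair $D^N_1\bar{R}\bar{P} - \bar{R}\bar{P}\,D^L_1$ is exactly $\partial$ applied to $\sum_{u+v=n+1,\, u\geqslant 1,\, v\geqslant 2}\rho_{u+1}(\id^{\otimes u}\otimes\pi_v)$, i.e.\@ the part of $(\rho\pi)_{n+1}$ assembled from strictly lower maps.

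The main obstacle is the sign and index bookkeeping. The shift convention for modules carries the factor $(-1)^{k-1}$ in $r_k(\shift^{\otimes(k-1)}\otimes\id) = (-1)^{k-1}\rho_k$, which differs from the algebra case, and the comodule $\tcoa{\susp A}\otimes M$ has a distinguished module slot that behaves differently from the $\susp A$ tensor factors; keeping these consistent when expanding the comultiplication and the coderivation is where the computation is most delicate. The subtlest point is confirming that the mixed composites genuinely vanish on the length-$(n+1)$ component, so that only the three stated summands survive — this is the analogue of the restriction argument in \cref{obs_alg_del,obs_comp} and is the step I expect to require the most careful tracking.
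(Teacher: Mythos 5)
Your proposal is correct and matches the paper's intended argument: the paper omits the proof of \cref{obs_mod_comp} precisely because it is the comodule analogue of \cref{obs_comp}, whose proof rests on exactly the commutator identity you display (with $F,G,D^A,D^B,D^C$ replaced by $P,R,D^L,D^M,D^N$ and the restriction taken on $(\susp A)^{\otimes n}\otimes L \to N$). Your identification of the three right-hand terms with $\rho_1\obs{n}{\pi}$, $\obs{n}{\rho}(\id^{\otimes n}\otimes\pi_1)$, and the boundary of the partial composite is the same translation step the paper performs in the algebra case.
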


\begin{lemma} \label{obs_mod_rest}
For a morphism of $\an{n}$-algebras $\phi \colon A \to B$ and an $\an{n}$-module $M$ over $B$ one has
\begin{equation*}
\obs{n}{M} (\phi_1^{\otimes n} \otimes \id_M) - \obs{n}{\phi_* M} = \partial(\sum_{p=2}^n \sum_{\substack{\bmalpha \in \BN^{p-1} \\ |\bmalpha| = n}} \mu_p (\bmphi^{\otimes \bmalpha} \otimes \id_M))\,. \pushQED{\qed} \qedhere \popQED
\end{equation*}
\end{lemma}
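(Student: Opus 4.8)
The plan is to run the reduced-coderivation argument of \cref{obs_alg_del,obs_comp} in the comodule setting attached to $\phi_*M$. First I pass to the shifted maps: write $D^M$ for the coderivation on the comodule $\tcoa{\susp B}\otimes M$ encoding the $\an{n}$-module $M$ over $B$, and $D^{\phi_*M}$ for the one on $\tcoa{\susp A}\otimes M$ encoding $\phi_*M$ over $A$, with differential parts $D^M_1,D^{\phi_*M}_1$ and reduced parts $\bar D^M,\bar D^{\phi_*M}$. From the $\an{n}$-morphism $\phi$ I form $F\colonequals\cmor(f_1,\dots,f_n)$, $F_1\colonequals\cmor(f_1)$, $\bar F\colonequals F-F_1$ and set $\Phi\colonequals F\otimes\id_M$, $\Phi_1\colonequals F_1\otimes\id_M$, $\bar\Phi\colonequals\bar F\otimes\id_M$, a morphism of $\tcoa{\susp A}$-comodules. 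The projection of $\Phi$ onto $M$ is the identity, and unwinding the definition of $\phi_*M$ shows that $D^M$, $\Phi$ and $D^{\phi_*M}$ agree after projecting to $M$; equivalently the comodule map $E\colonequals D^M\Phi-\Phi D^{\phi_*M}$ has vanishing projection to $M$.

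Next I match the three terms of the statement with coderivation expressions restricted to $(\susp A)^{\otimes n}\otimes M\to M$, that is, to the projection onto $M$. Because $M$ is a valid $\an{n}$-module, the only part of $\Phi$ that survives when $(D^M)^2$ is projected to $M$ in top degree is $\Phi_1$, so $\obs{n}{M}(\phi_1^{\otimes n}\otimes\id_M)$ is the projection to $M$ of $(D^M)^2\Phi$. Likewise, using that the projection of $\Phi$ to $M$ is the identity, the obstruction $\obs{n}{\phi_*M}$ is the projection to $M$ of $\Phi(D^{\phi_*M})^2$. Finally the bracketed sum on the right is the projection to $M$ of $\bar D^M\bar\Phi$; here the range $p\le n$, $|\bmalpha|=n$ is exactly what is left after removing the degenerate term $\mu^M_{n+1}(\phi_1^{\otimes n}\otimes\id_M)$, i.e.\@ the $p=n+1$ summand that would complete $\mu^{\phi_*M}_{n+1}$.

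The engine is the formal identity $(D^M)^2\Phi-\Phi(D^{\phi_*M})^2=D^M E+E D^{\phi_*M}$, which follows at once from the definition of $E$. Projecting to $M$ kills the summand $E D^{\phi_*M}$, since the projection of $E$ to $M$ vanishes, and by the previous paragraph the left-hand side projects to $\obs{n}{M}(\phi_1^{\otimes n}\otimes\id_M)-\obs{n}{\phi_*M}$. Thus it remains to show that the projection of $D^M E$ to $M$, restricted to $(\susp A)^{\otimes n}\otimes M$, equals $\partial$ of the bracketed sum. Splitting $E$ through $D^M=D^M_1+\bar D^M$, $D^{\phi_*M}=D^{\phi_*M}_1+\bar D^{\phi_*M}$ and $\Phi=\Phi_1+\bar\Phi$, using $D^M_1\Phi_1=\Phi_1 D^{\phi_*M}_1$ (as $f_1$ is a chain map) together with the $\an{n}$-vanishing facts established as in \cref{obs_alg_del}, a straightforward computation reduces the projection of $D^M E$ to the commutator
\[
\mathrm{proj}_M\bigl(D^M_1\,\bar D^M\bar\Phi-\bar D^M\bar\Phi\,D^{\phi_*M}_1\bigr),
\]
which is precisely the coderivation avatar of $\partial\bigl(\mathrm{proj}_M\bar D^M\bar\Phi\bigr)$.

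Replacing the shifted maps $m^M_k$ and $f_k$ by the structure maps $\mu^M_k$ and $\phi_k$ and tracking the Koszul signs then yields the stated equation; the normalization $m^M_k(\shift^{\otimes(k-1)}\otimes\id_M)=(-1)^{k-1}\mu^M_k$ is what makes the right-hand side free of explicit signs. The main obstacle is the reduction in the third step: one must decompose $\bar D^M$ into its internal-algebra contributions (the $m^B_v$) and its genuine module contributions (the $m^M_v$), split $\bar\Phi$ through the comodule coaction, and check that the two families of terms appearing in the module Stasheff identity recombine so that everything except the $D_1$-commutator cancels. This bookkeeping is heavier than in \cref{obs_comp} because $\bar\Phi=\bar F\otimes\id_M$ feeds into both the coalgebra coderivation on $\tcoa{\susp B}$ and the module action, and it is the insistence on projecting to $M$ that keeps the sums finite and pins down the ranges $p\le n$, $|\bmalpha|=n$ in the boundary term.
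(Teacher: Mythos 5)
The paper itself gives no proof of this lemma (the proofs in \cref{sec:modules} are skipped as being analogous to those in \cref{sec:obstruction}), and your framework --- the formal identity $(D^M)^2\Phi-\Phi(D^{\phi_*M})^2=D^ME+ED^{\phi_*M}$ with $E\colonequals D^M\Phi-\Phi D^{\phi_*M}$, projected to $M$ and restricted to $(\susp A)^{\otimes n}\otimes M$ --- is indeed the natural comodule analogue of the arguments for \cref{obs_alg_del,obs_comp}. Your identification of the two obstruction terms as the projections of $(D^M)^2\Phi$ and of $\Phi(D^{\phi_*M})^2$ is also correct. The genuine error is the central claim that the projection of $E$ to $M$ vanishes. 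That vanishing holds only on tensor degrees $j<n$, where the identity $m^{\phi_*M}_{j+1}=\sum_{p}\sum_{\bmalpha} m^M_{p}(\bmf^{\otimes\bmalpha}\otimes\id_M)$ is available as part of the $\an{n}$-structure of $\phi_*M$ (it requires $j+1\leq n$). On $(\susp A)^{\otimes n}\otimes M$ there is no $(n+1)$st structure map: there the projection of $\Phi D^{\phi_*M}$ to $M$ is zero, while the projection of $D^M\Phi$ equals $\sum_{p=2}^{n}\sum_{\bmalpha\in\BN^{p-1},\,|\bmalpha|=n} m^M_p(\bmf^{\otimes\bmalpha}\otimes\id_M)$, which is exactly the shifted bracketed sum. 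Your own writeup contains this contradiction: you later identify the bracketed sum with the projection of $\bar D^M\bar\Phi$, but in top degree that projection coincides with the projection of $E$, since all remaining summands of $E$ (those involving $D^M_1$, $\Phi_1$, or $\Phi D^{\phi_*M}$) project to zero there.

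The two conclusions you draw from the false claim then both fail, in compensating ways, which is why you still land on the correct formula. The summand $ED^{\phi_*M}$ is \emph{not} killed by projecting to $M$: the degree-preserving component of $D^{\phi_*M}$ on $(\susp A)^{\otimes n}\otimes M$ is the tensor differential, so this summand contributes the bracketed sum pre-composed with the differential of $A^{\otimes n}\otimes M$ --- one half of the boundary. Correspondingly, the projection of $D^ME$ is not the full commutator you assert, but only $\mu^M_1$ post-composed with the bracketed sum --- the other half. Since each step is justified by an appeal to the false vanishing, the proof as written has a genuine gap even though the endpoints match. The repair is close at hand: prove the vanishing of the projection of $E$ only in degrees $<n$; then note that on source degrees $\leq n$ the map $E$ is compatible with the coactions over $F$ (this uses $FD^A=D^BF$ there, valid because $\phi$ is a morphism of $\an{n}$-algebras) and that $F$ is coaugmented, so $E$ restricted to $(\susp A)^{\otimes n}\otimes M$ takes values in $Q\otimes M$ and equals the bracketed sum; the projection of $D^ME+ED^{\phi_*M}$ is then $m^M_1\circ(\text{bracketed sum})+(\text{bracketed sum})\circ\partial$, which is precisely the boundary, and unshifting yields the stated identity.
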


\begin{lemma} \label{obs_mod_hpty}
For an $\an{n}$-homotopy $\tau$ witnessing that a morphism of $\an{n}$-modules $\rho \colon M \to N$ is null-homotopic one has
\begin{equation*}
\partial(\obs{n}{\tau}) = \obs{n}{\rho} - \tau_1 \obs{n}{M} + (-1)^n \obs{n}{N} (\id_A^{\otimes n} \otimes \tau_1)\,. \pushQED{\qed} \qedhere \popQED
\end{equation*}
\end{lemma}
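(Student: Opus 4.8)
The plan is to mirror the proof of \cref{obs_htpy_del}, replacing the bar construction on tensor coalgebras by the comodule bar construction $\tcoa{\susp A} \otimes M$ over $\tcoa{\susp A}$. First I would fix the coderivation $D^A = \cder(m^A_1, \ldots, m^A_n)$ on $\tcoa{\susp A}$ encoding the $\an{n}$-algebra structure of $A$, and encode the $\an{n}$-module structures on $M$ and $N$ as comodule coderivations $D_M$ and $D_N$ of degree $-1$ assembled from the shifted maps $m^M_k$ and $m^N_k$. The morphism $\rho$ corresponds to a comodule morphism $R$ covering $\id_{\tcoa{\susp A}}$, and the homotopy $\tau$ to a comodule coderivation $T$ of degree $1$, the null-homotopy condition reading $R = D_N T + T D_M$ up to comodule tensor degree $n$ (this is the $\psi = 0$ instance of the identity $F - G = D^B S + S D^A$). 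As in \cref{obs_alg_del,obs_htpy_del} I would split off the linear parts, writing $\bar{D}_M = D_M - (D_M)_1$, $\bar{D}_N = D_N - (D_N)_1$, $\bar{R} = R - R_1$ and $\bar{T} = T - T_1$, where the subscript $1$ denotes the component built only from $m^M_1$, $m^N_1$, $r_1$ and $t_1$, respectively.

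Next, exactly as in \cref{obs_htpy_del}, the hypothesis that $M$ and $N$ are $\an{n}$-modules forces $D_M^2$ and $D_N^2$ to vanish on the relevant low-tensor-degree components, and the null-homotopy identity together with degree reasons give the comodule analogues of the three identity-blocks used there, namely
\begin{gather*}
R \bar{D}_M = D_N T \bar{D}_M + T D_M \bar{D}_M\,, \quad \bar{D}_N R = \bar{D}_N D_N T + \bar{D}_N T D_M\,, \\
D_N D_N \bar{T} = 0 = \bar{T} D_M D_M\,,
\end{gather*}
all read off on the restriction to $(\susp A)^{\otimes n} \otimes M \to N$. Combining these by the same manipulation as in \cref{obs_htpy_del} yields, on this restriction,
\begin{equation*}
\begin{aligned}
\bar{R} \bar{D}_M - \bar{D}_N \bar{R} = {}& (D_N)_1(\bar{D}_N \bar{T} + \bar{T} \bar{D}_M) - (\bar{D}_N \bar{T} + \bar{T} \bar{D}_M)(D_M)_1 \\
&+ T_1 \bar{D}_M \bar{D}_M - \bar{D}_N \bar{D}_N T_1\,.
\end{aligned}
\end{equation*}

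Finally I would translate this coderivation-level identity back into the unshifted maps $\mu$, $\rho$ and $\tau$, matching the left-hand side to $\obs{n}{\rho}$ and the right-hand side to $\partial(\obs{n}{\tau}) + \tau_1 \obs{n}{M} - (-1)^n \obs{n}{N}(\id_A^{\otimes n} \otimes \tau_1)$, which rearranges to the claim; here the role of $\phi_1$ from \cref{obs_htpy_del} is played by $\id_A$, since the base algebra is unchanged. The hard part will be the sign bookkeeping: the comodule bar construction is one-sided, so the $M$-factor always sits at the right end and the explicit formulas for $D_M$, $R$ and $T$ differ from the two-sided coalgebra ones, and the module shift convention $m^M_k(\shift^{\otimes(k-1)} \otimes \id) = (-1)^{k-1}\mu^M_k$ introduces $k$-dependent signs when commuting the $\shift$'s past the maps. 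It is precisely this last convention that produces the $+(-1)^n$ in the final term, in contrast to the $-(-1)^n$ appearing in \cref{obs_htpy_del}.
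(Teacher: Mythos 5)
Your proposal is correct and is essentially the paper's own route: the paper states \cref{obs_mod_hpty} with the proof omitted precisely because it is the comodule-bar-construction analogue of \cref{obs_htpy_del}, and the computation you outline --- splitting off the linear parts, the three identity blocks, the combined coderivation-level identity, and the translation back to the unshifted maps --- is exactly that analogy carried out. Two small corrections. First, $T$ is a degree-$1$ \emph{morphism} of comodules over $\tcoa{\susp A}$ (assembled from the $t_k$ by cofreeness, with components $\id^{\otimes u} \otimes t_k$; this one-sidedness is what makes the last term a single term $\id_A^{\otimes n} \otimes \tau_1$ rather than a sum), not a coderivation; coderivations of comodules are what encode the structures $D_M$ and $D_N$ themselves. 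Second, your closing sign remark is mistaken: there is no sign contrast with \cref{obs_htpy_del}, and the module shift convention $(-1)^{k-1}$ plays no role in producing one. The two lemmas are merely arranged differently; solving \cref{obs_htpy_del} for the homotopy obstruction gives $\partial(\obs{n}{\sigma}) = \obs{n}{\phi} - \obs{n}{\psi} - \sigma_1 \obs{n}{A} + (-1)^n \obs{n}{B} \sum_{u+w=n} (\phi_1^{\otimes u} \otimes \sigma_1 \otimes \psi_1^{\otimes w})$, which is exactly the sign pattern of \cref{obs_mod_hpty} under $\psi = 0$, so in carrying out the verification you should not expect (or force) an extra sign flip from the shift convention.
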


This concludes the analysis of the obstruction terms, and we are ready for the transfer and lifting results for $\ai$-module structures.

\begin{proposition} \label{transfer_ai_mod}
We assume $A$ is an $\ai$-algebra with $(A,\mu^A_1)$ a bounded below complex of projectives. Let $M$ be an $\ai$-module over $A$ and let $\epsilon_1 \colon G \to M$ be a quasi-isomorphism with $(G,\mu^G_1)$ a bounded below complex of projectives. Then there exists an $\ai$-module structure on $G$ and a quasi-isomorphism of $\ai$-modules $\epsilon \colon G \to M$ extending $\epsilon_1$. If $\epsilon$ is degreewise surjective, then the morphism of $\ai$-modules $\epsilon$ can be chosen to be strict. If $A$ is split unital and $M$ strictly unital, then the $\ai$-module structure can be chosen to be strictly unital, and $\epsilon$ to be to be a morphism of strictly unital $\ai$-modules.
\end{proposition}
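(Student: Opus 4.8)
The plan is to mimic the proof of \cref{transfer_ai_alg} essentially verbatim, replacing the algebra obstruction lemmas by their module counterparts and tensoring the module $G$ into the relevant sources. I would inductively construct maps $\mu^G_{n+1}$ and $\epsilon_{n+1}$ so that after the $n$th stage $G$ is an $\an{n}$-module and $\epsilon$ a morphism of $\an{n}$-modules. For $n = 1$ I set $\mu^G_1 \colonequals \partial^G$; the first Stasheff identities \cref{mod_stasheff,mod_mor_stasheff} then reduce to $(\partial^G)^2 = 0$ and to $\epsilon_1$ being a chain map, both of which hold by hypothesis. The roles played by \cref{obs_alg_del,obs_mor_del} are now played by the module obstruction lemmas \cref{obs_mod_del,obs_mod_mor}, while \cref{null_homotopic_qi} is used unchanged.

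For the inductive step, suppose $G$ is an $\an{n}$-module and $\epsilon$ a morphism of $\an{n}$-modules. By \cref{obs_mod_del} the map $\obs{n}{G} \colon A^{\otimes n} \otimes G \to G$ is a chain map. Since $M$ is a genuine $\ai$-module it is in particular an $\an{n+1}$-module, so \cref{obs_mod_stasheff} gives $\obs{n}{M} = -\partial(\mu^M_{n+1})$. Feeding this into \cref{obs_mod_mor}, and using that $\id^{\otimes n} \otimes \epsilon_1$ is a chain map, I would obtain
\begin{equation*}
\epsilon_1 \obs{n}{G} = \partial(\obs{n}{\epsilon}) + \obs{n}{M}(\id^{\otimes n} \otimes \epsilon_1) = \partial\bigl(\obs{n}{\epsilon} - \mu^M_{n+1}(\id^{\otimes n} \otimes \epsilon_1)\bigr)\,,
\end{equation*}
so that $\epsilon_1 \obs{n}{G}$ is null-homotopic. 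As $A$ and $G$ are bounded below complexes of projectives, so is $A^{\otimes n} \otimes G$, and \cref{null_homotopic_qi} applies: it produces a map $-\mu^G_{n+1} \colon A^{\otimes n} \otimes G \to G$ with $\obs{n}{G} = -\partial(\mu^G_{n+1})$ together with a homotopy $\epsilon_{n+1}$ satisfying \cref{obs_mod_mor_stasheff}. These are exactly the data upgrading $G$ to an $\an{n+1}$-module and $\epsilon$ to a morphism of $\an{n+1}$-modules.

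For the surjective case I would replace \cref{null_homotopic_qi} by the strictly commuting lift of \cref{lifting_cx_surj} at each stage, which lets the witnessing homotopy be taken to be $0$, so that each $\epsilon_{n+1} = 0$ and $\epsilon$ is strict. For the strictly unital statement I would follow \cref{transfer_unital_ai_alg}: the unit-insertion identities for modules force $\obs{n}{G}$ and $\obs{n}{\epsilon}$ to vanish on the images of the algebra unit, hence to factor through the reduced object $\bar{A}^{\otimes n} \otimes G$, which is again a bounded below complex of projectives and a direct summand of $A^{\otimes n} \otimes G$. Carrying out the induction over $\bar{A}^{\otimes n} \otimes G$ and precomposing the resulting maps with the projection $A^{\otimes n} \otimes G \to \bar{A}^{\otimes n} \otimes G$ yields a strictly unital structure. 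The stage $n = 2$ must be handled separately: here I would build $\mu^G_2$ by applying \cref{lifting_cx} to the square with vertical maps $\eta^A \otimes \id_G$ and $\epsilon_1$ and bottom edge $\mu^M_2(\id_A \otimes \epsilon_1)$, so that the upper-left triangle encodes the unit axiom $\mu^G_2(\eta^A \otimes \id_G) = \id_G$ and the lower-right triangle supplies $\epsilon_2$.

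I expect the main obstacle to be bookkeeping rather than anything conceptual. The delicate point is to confirm that \cref{obs_mod_del,obs_mod_mor} combine with the Stasheff identity for $M$ in precisely the pattern used in \cref{transfer_ai_alg}, despite the asymmetry between the algebra tensor factors and the single module factor; and, in the unital step, to check that the algebra unit is only ever inserted into the $A^{\otimes n}$ factors and never into $G$, so that the correct reduction is to $\bar{A}^{\otimes n} \otimes G$ and no splitting of $G$ itself is required.
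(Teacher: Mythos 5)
Your proposal is correct and follows essentially the same route as the paper, whose proof is exactly the one you describe: run the induction of \cref{transfer_ai_alg} with \cref{obs_mod_del,obs_mod_mor} in place of the algebra obstruction lemmas, and handle the strictly unital case as in \cref{transfer_unital_ai_alg} by restricting to $\bar{A}^{\otimes n} \otimes G$. Your added observations --- that $A^{\otimes n}\otimes G$ is again a bounded below complex of projectives, and that the unit is only ever inserted into the algebra slots so $G$ itself needs no splitting --- are precisely the points the paper leaves implicit.
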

\begin{proof}
The proof is the same as the ones for \cref{transfer_ai_alg,transfer_unital_ai_alg}, using the formulas \cref{obs_mod_del,obs_mod_mor} for the obstructions.
\end{proof}

\begin{proposition} \label{lift_ai_mod_mor}
We assume $A$ is an $\ai$-algebras with $(A,\mu^A_1)$ a bounded below complex of projectives. Let $\epsilon \colon M \to N$ be a quasi-isomorphism of $\ai$-modules and $\pi \colon G \to N$ a morphism of $\ai$-modules with $(G,\mu^G_1)$ a bounded below complex of projectives. Then there exists a morphism of $\ai$-modules $\rho \colon A \to M$ such that $\epsilon \rho \simeq \pi$ as morphisms of $\ai$-modules. If $\epsilon_1$ is degreewise surjective, then $\rho$ can be chosen such that $\epsilon \rho = \pi$. If $A$ is split unital and $\epsilon$ and $\pi$ are morphisms of strictly unital $\ai$-modules, then $\rho$ can be chosen to be a morphism of strictly unital $\ai$-modules, and the $\ai$-homotopy can be chosen to respect the strictly unital structure.
\end{proposition}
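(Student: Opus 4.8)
The plan is to mimic the inductive argument in the proof of \cref{lift_ai_mor}, with the algebra obstruction lemmas replaced throughout by their module counterparts \cref{obs_mod_mor,obs_mod_comp,obs_mod_hpty}. I would inductively construct maps $\rho_n \colon A^{\otimes(n-1)}\otimes G \to M$ and homotopy maps $\tau_n \colon A^{\otimes(n-1)}\otimes G \to N$ so that, at each stage, $\rho$ is a morphism of $\an{n}$-modules and $\tau$ is an $\an{n}$-homotopy witnessing $\epsilon\rho \simeq \pi$. Since $A$ and $G$ are bounded below complexes of projectives, each domain $A^{\otimes n}\otimes G$ is again a bounded below complex of projectives, which is exactly the input required by \cref{null_homotopic_qi}.

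For the base case $n=1$, I would apply \cref{lifting_cx} to the square with the quasi-isomorphism $\epsilon_1 \colon M \to N$ on the right and $\pi_1 \colon G \to N$ along the bottom, taking the zero object in the upper-left corner so that the cokernel of the left vertical map is $G$. This yields a chain map $\rho_1 \colon G \to M$ together with a homotopy $\tau_1$ satisfying $\epsilon_1\rho_1 - \pi_1 = \partial(\tau_1)$, which are precisely the first module Stasheff identities \cref{mod_mor_stasheff,mod_hpty_stasheff}.

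For the inductive step, assuming $\rho$ is a morphism of $\an{n}$-modules and $\tau$ an $\an{n}$-homotopy, I would combine \cref{obs_mod_comp} applied to $\epsilon\rho$ with \cref{obs_mod_hpty} to rewrite the expression $\epsilon_1\bigl(\obs{n}{\rho} - \mu^M_{n+1}(\id^{\otimes n}\otimes\rho_1) + \rho_1\mu^G_{n+1}\bigr)$ as a boundary, exactly paralleling the computation in \cref{lift_ai_mor}. The module Stasheff identities \cref{obs_mod_stasheff,obs_mod_mor_stasheff} for $M$, $N$ and for the given morphisms $\epsilon$, $\pi$ absorb the terms that are not manifestly boundaries, while the base-case relation $\epsilon_1\rho_1 - \pi_1 = \partial(\tau_1)$ accounts for the remaining ones. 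A single application of \cref{null_homotopic_qi} then extracts $\rho_{n+1}$ and $\tau_{n+1}$ satisfying \cref{obs_mod_mor_stasheff,obs_mod_hpty_stasheff}. For the surjective case I would instead invoke \cref{lifting_cx_surj} in the base step, so that $\tau_1 = 0$ and all subsequent homotopies vanish, producing $\epsilon\rho = \pi$ on the nose.

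For the strictly unital refinement, I would use the restriction device from \cref{transfer_unital_ai_alg,lift_unital_ai_mor}: since inserting the unit $\eta^A$ into any tensor slot annihilates the module obstructions, both $\obs{n}{\rho}$ and the homotopy obstruction factor through $\bar{A}^{\otimes n}\otimes G$, which is again a bounded below complex of projectives. I would run the extraction on this restricted domain and then precompose the resulting maps with the projection onto $\bar{A}^{\otimes n}\otimes G$, so that $\rho_{n+1}$ and $\tau_{n+1}$ automatically satisfy the unital vanishing conditions. I expect the main obstacle to be the sign bookkeeping in the inductive step: the module Stasheff identity \cref{mod_stasheff} is asymmetric, carrying a separate sum over the terms $\mu_{u+1}(\id^{\otimes u}\otimes\mu_v)$, so checking that all obstruction terms assemble into one boundary with the predicted homotopy demands more careful sign tracking than in the algebra case. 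In keeping with the rest of this section, I would record these as straightforward if tedious verifications rather than write them out in full.
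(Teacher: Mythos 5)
Your proposal follows exactly the route the paper takes: its proof of this proposition is a one-line reference to the proofs of \cref{lift_ai_mor,lift_unital_ai_mor}, substituting the module obstruction formulas \cref{obs_mod_comp,obs_mod_hpty}, and your inductive construction (base case via \cref{lifting_cx}, inductive step via \cref{null_homotopic_qi}, unital case via restriction to $\bar{A}^{\otimes n}\otimes G$) is precisely that argument spelled out. The proposal is correct and matches the paper's approach.
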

\begin{proof}
The proof is the same as the ones for \cref{lift_ai_mor,lift_unital_ai_mor}, using the formulas \cref{obs_mod_comp,obs_mod_hpty} for the obstructions.
\end{proof}

\begin{corollary}
The transfer of (strictly unital) $\ai$-module structures from $M$ to $G$ in \cref{transfer_ai_mod} is unique up to homotopy equivalence. \qed
\end{corollary}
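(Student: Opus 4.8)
The plan is to mirror the proof of \cref{unique_transfer_ai_alg}, replacing the algebra lifting result \cref{lift_ai_mor} by its module counterpart \cref{lift_ai_mod_mor}. Fix $A$, $M$ and $G$ as in \cref{transfer_ai_mod}, and suppose $(G,\hat{\mu}_k)$ and $(G,\tilde{\mu}_k)$ are two $\ai$-module structures on $G$ over $A$, each equipped with a quasi-isomorphism of $\ai$-modules to $M$, say $\hat{\epsilon},\tilde{\epsilon} \colon G \to M$, both extending the given $\epsilon_1$; in the strictly unital setting I additionally take these to be morphisms of strictly unital $\ai$-modules. The goal is to produce a homotopy equivalence of $\ai$-modules between $(G,\hat{\mu}_k)$ and $(G,\tilde{\mu}_k)$, which is exactly the assertion that the transferred structure is unique up to homotopy equivalence.

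First I would apply \cref{lift_ai_mod_mor} to the quasi-isomorphism $\tilde{\epsilon} \colon (G,\tilde{\mu}_k) \to M$ and the morphism $\hat{\epsilon} \colon (G,\hat{\mu}_k) \to M$. Since $G$ is a bounded below complex of projectives, this yields a morphism of $\ai$-modules $\rho \colon (G,\hat{\mu}_k) \to (G,\tilde{\mu}_k)$ with $\tilde{\epsilon}\rho \simeq \hat{\epsilon}$; in the strictly unital case $\rho$ and the witnessing homotopy respect the unital structure. Because composition of $\ai$-module morphisms satisfies $(\tilde{\epsilon}\rho)_1 = \tilde{\epsilon}_1\rho_1$, passing to homology turns $\tilde{\epsilon}\rho \simeq \hat{\epsilon}$ into the relation $H(\tilde{\epsilon}_1)H(\rho_1) = H(\hat{\epsilon}_1)$. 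As $\hat{\epsilon}_1$ and $\tilde{\epsilon}_1$ are quasi-isomorphisms, so is $\rho_1$, and hence $\rho$ is a quasi-isomorphism of $\ai$-modules.

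It then remains to upgrade the quasi-isomorphism $\rho$ to a homotopy equivalence, exactly as in \cref{unique_transfer_ai_alg}. Applying \cref{lift_ai_mod_mor} to $\rho$ and the identity $\id_{(G,\tilde{\mu}_k)}$ produces a right homotopy inverse $\sigma \colon (G,\tilde{\mu}_k) \to (G,\hat{\mu}_k)$ with $\rho\sigma \simeq \id$; since $\sigma$ is then itself a quasi-isomorphism, the same construction applied to $\sigma$ yields $\sigma' \colon (G,\hat{\mu}_k) \to (G,\tilde{\mu}_k)$ with $\sigma\sigma' \simeq \id$. Using that homotopy of $\ai$-module morphisms is a congruence with respect to composition, one gets $\sigma' \simeq (\rho\sigma)\sigma' = \rho(\sigma\sigma') \simeq \rho$, whence $\sigma\rho \simeq \sigma\sigma' \simeq \id$. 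Thus $\sigma$ is a two-sided homotopy inverse of $\rho$, so $\rho$ is a homotopy equivalence and the two transferred structures agree up to homotopy equivalence. In the strictly unital situation every morphism and homotopy produced above is strictly unital by \cref{lift_ai_mod_mor}, so the identical argument gives the conclusion there.

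The individual steps are routine once the obstruction machinery of \cref{obs_mod_comp,obs_mod_hpty} underlying \cref{lift_ai_mod_mor} is in hand, so I do not expect a genuine obstacle. The only points requiring care are bookkeeping ones: verifying at each invocation that the hypotheses of \cref{lift_ai_mod_mor} hold, in particular that the relevant source remains a bounded below complex of projectives and that the first component being compared is a quasi-isomorphism, and confirming that $\ai$-module homotopy descends to a congruence under composition so that the chain of equivalences establishing $\sigma\rho \simeq \id$ is legitimate. This congruence is implicit in the composition formulas for $\ai$-modules recorded in \cref{sec:modules}, and transporting it cleanly is the one place where a little verification is needed.
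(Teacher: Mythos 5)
Your proposal is correct and is essentially the paper's (implicit) proof: the corollary is stated with a \qed precisely because it follows by repeating the argument of \cref{unique_transfer_ai_alg} verbatim, with \cref{lift_ai_mod_mor} in place of \cref{lift_ai_mor}, which is exactly what you do. The only cosmetic difference is how the two one-sided homotopy inverses are combined (you compose $\sigma' \simeq \rho$ to get $\sigma\rho \simeq \id$, the paper checks $\psi \simeq \psi'$), and both rest on the same unproven-but-routine fact that composition of $\ai$-module morphisms respects homotopy.
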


Finally, we combine the transfer of $\ai$-algebra structures and of $\ai$-module structures.

\begin{corollary}
Let $M$ be an $\ai$-module over an $\ai$-algebra $B$ and $\phi_1 \colon A \to B$ and $\epsilon_1 \colon G \to M$ quasi-isomorphisms of complexes with $A$ and $G$ bounded complexes of projectives. Then there exists an $\ai$-algebra structure on $A$, an $\ai$-module structure over $A$ on $G$, a quasi-isomorphism $\phi \colon A \to B$ of $\ai$-algebras extending $\phi_1$ and a quasi-isomorphism $\epsilon \colon G \to \phi_* M$ of $\ai$-modules over $A$ extending $\epsilon_1$. 

Moreover, if the $\ai$-structures on $B$ and $M$ are strictly unital, and $A = Q \oplus \bar{A}$ as graded modules such that $\partial(Q) = 0$ and $(Q \to A \to B) = \eta^B$, then the $\ai$-structures on $A$ and $G$ and the morphisms $\phi$ and $\epsilon$ can be chosen to be strictly unital. \qed
\end{corollary}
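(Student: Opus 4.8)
The plan is to decouple the two transfers: first transfer the algebra structure, and then transfer the module structure over the resulting $\ai$-algebra. First I would apply \cref{transfer_ai_alg} to the quasi-isomorphism $\phi_1 \colon A \to B$ to produce an $\ai$-algebra structure on $A$ together with a quasi-isomorphism of $\ai$-algebras $\phi \colon A \to B$ extending $\phi_1$. Once $\phi$ is in hand, restriction along $\phi$ gives the $\ai$-module $\phi_* M$ over $A$, as defined above. The key observation is that restriction does not change the underlying complex: by definition $\mu^{\phi_* M}_1 = \mu^M_1$, so $\phi_* M$ and $M$ share the same differential, and hence the given map $\epsilon_1 \colon G \to M = \phi_* M$ remains a quasi-isomorphism of complexes.

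With $A$ an $\ai$-algebra, $\phi_* M$ an $\ai$-module over $A$, and $\epsilon_1 \colon G \to \phi_* M$ a quasi-isomorphism from a bounded complex of projectives, I would then apply \cref{transfer_ai_mod}. This yields an $\ai$-module structure over $A$ on $G$ and a quasi-isomorphism of $\ai$-modules $\epsilon \colon G \to \phi_* M$ extending $\epsilon_1$, which is exactly the first assertion. If in addition $\epsilon_1$ is degreewise surjective, the corresponding clause of \cref{transfer_ai_mod} lets us take $\epsilon$ strict.

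For the strictly unital statement I would instead invoke \cref{transfer_unital_ai_alg} in the first step: its hypotheses are precisely $A = Q \oplus \bar{A}$ with $\partial(Q) = 0$ and $(Q \to A \to B) = \eta^B$, so it produces a split unital structure on $A$ together with a morphism $\phi$ of strictly unital $\ai$-algebras. The one point requiring genuine verification, which I expect to be the main (though still routine) obstacle, is that $\phi_* M$ is then strictly unital as an $\ai$-module over $A$. One checks this directly from the formula for $\phi_* M$: inserting the unit $\eta^A$ into any algebra slot feeds it into one of the tensor factors $\phi_j$, and since $\phi$ is strictly unital every such factor with $j \geq 2$ annihilates the term while $\phi_1 \eta^A = \eta^B$; the surviving contributions therefore collapse to the strictly unital structure of $M$ over $B$, giving $\mu^{\phi_* M}_2(\eta^A \otimes \id_M) = \id_M$ together with the vanishing of all higher unit insertions.

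Finally, with $A$ split unital and $\phi_* M$ strictly unital, the strictly unital clause of \cref{transfer_ai_mod} applies verbatim and produces a strictly unital $\ai$-module structure on $G$ together with a morphism $\epsilon \colon G \to \phi_* M$ of strictly unital $\ai$-modules extending $\epsilon_1$. This completes both assertions.
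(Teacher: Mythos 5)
Your proposal is correct and is exactly the argument the paper intends---the corollary is stated with a \qed{} as an immediate consequence of combining \cref{transfer_ai_alg} (respectively \cref{transfer_unital_ai_alg}) with \cref{transfer_ai_mod} applied to $\epsilon_1 \colon G \to \phi_* M$, which is possible since restriction along $\phi$ leaves the underlying complex of $M$ unchanged. Your explicit verification that $\phi_* M$ inherits strict unitality from $\phi$ and $M$ is the one detail the paper leaves tacit, and your check of it is correct.
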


\begin{remark} \label{sign_convention}
There are various sign conventions for the Stasheff identities and for the relations to the shifted maps. We try to give an overview on the most common conventions and how they are related.

For the Stasheff identities of $\ai$-algebras there are mainly two different conventions: First there is the convention used in this article, which also, among others, is used in \cite{Stasheff:1963b,Merkulov:1999,Keller:2001,Loday/Vallette:2012}. For the other convention one replaces the sign $(-1)^{u+vw}$ in \cref{alg_stasheff} by $(-1)^{uv+w}$; this is used in \cite{LefevreHasegawa:2003,Sagave:2010}. One sees these conventions are equivalent by replacing $\mu^A_k$ by $(-1)^{\frac{k(k-1)}{2}} \mu^A_k$. The Stasheff identities for morphisms and homotopies need to be adjusted appropriately.

For modules the situation is a bit more complex. First the sign is chosen parallel to that of algebras described above. A second difference is in the connection to the bar construction. In this article we use the equivalence between $\ai$-module structures and square-zero coderivations on $\tcoa{\susp A} \otimes M$. Additionally, these structures are equivalent to square-zero coderivations on $\tcoa{\susp A} \otimes \susp M$. One obtains different Stasheff identities for morphisms of $\ai$-modules, whether one translates a morphism from $\tcoa{\susp A} \otimes M$ or $\tcoa{\susp A} \otimes \susp M$ to a morphism of $\ai$-algebras; see for example \cite[4.2]{Keller:2001} for the latter.

Some sources avoid the problem of a sign convention, by defining the $\ai$-structures as the shifted structures. This has the advantage of simplifying many formulas. The drawback is that it does not behave as an algebra anymore. 

There seem to be two `good' conventions for the relationship between $\ai$-structures and the corresponding shifted maps. There is the one used in this article, and then the one obtained by adding a factor $(-1)^{k-1}$ to the definition of the shifted map for every structure. In practice this choice does not have a big impact, but differences can for example be observed in the strictly unital condition when expressed using the shifted maps.
\end{remark}

\bibliographystyle{amsalpha}
\bibliography{refs}

\end{document}